\newtheorem*{result}{Theorem}
\newtheorem{lemma}{Lemma}
\newtheorem{prop}{Proposition}
\newtheorem{obs}{Observation}
\newtheorem{thm}{Theorem}
\newtheorem{cor}{Corollary}
\theoremstyle{definition}
\newtheorem{defn}{Definition}
\newtheorem{cond}{Condition}
\newtheorem{strgm}{Stratagem}
\theoremstyle{remark}
\newtheorem{rem}{Remark}
\newtheorem{ex}{Example}
\newtheorem{exs}[ex]{Examples}
\newcounter{numl}
\newcommand{\labelnuml}{\textup{(\roman{numl})}}
\newenvironment{numlist}{\begin{list}{\labelnuml}%
{\usecounter{numl}\setlength{\leftmargin}{0pt}%
\setlength{\itemindent}{2\parindent}%
\setlength{\itemsep}{\smallskipamount}\def
\makelabel ##1{\hss \llap {\upshape ##1}}}}{\end{list}}
\newenvironment{bulletlist}{\begin{list}{\labelitemi}%
{\setlength{\leftmargin}{\parindent}\def
\makelabel ##1{\hss \llap {\upshape ##1}}}}{\end{list}}
\DeclareSymbolFont{script}{U}{eus}{m}{n}
\DeclareSymbolFontAlphabet{\mathscr}{script}
\DeclareMathSymbol{\Wedge}{0}{script}{"5E}
\DeclareMathAlphabet{\mathrmsl}{OT1}{cmr}{m}{sl}
\newcommand{\R}{{\mathbb R}}
\newcommand{\C}{{\mathbb C}}
\newcommand{\Z}{{\mathbb Z}}
\newcommand{\N}{{\mathbb N}}
\newcommand{\Sph}{{\mathbb S}}
\newcommand{\T}{{\mathbb T}}
\newcommand{\cC}{{\mathcal C}}
\newcommand{\cE}{{\mathcal E}}
\newcommand{\cF}{{\mathcal F}}
\newcommand{\cI}{{\mathcal I}}
\newcommand{\cL}{{\mathcal L}}
\newcommand{\cN}{{\mathcal N}}
\newcommand{\cS}{{\mathscr S}}
\newcommand{\cU}{{\mathcal U}}
\newcommand{\cY}{{\mathcal Y}}
\newcommand{\g}{\mathfrak g}
\newcommand{\h}{\mathfrak h}
\newcommand{\tor}{{\mathfrak k}}
\newcommand{\torh}{{\mathfrak h}}
\newcommand{\ham}{\mathfrak{ham}}
\newcommand{\con}{\mathfrak{con}}
\newcommand{\Con}{\mathrm{Con}}
\newcommand{\Ab}[1][\cS]{\T_{#1}}
\newcommand{\ab}[1][\cS]{{\mathfrak t}_{#1}}
\newcommand{\bK}{{\mathbf K}}
\newcommand{\Rb}[1]{{\mathscr K}^{#1}}
\newcommand{\Ds}{{\mathscr D}}
\newcommand{\Lv}{L}
\newcommand{\restr}[1]{|_{#1}^{\vphantom x}}
\newcommand{\ip}[1]{\langle #1 \rangle}
\newcommand{\mult}{^{\scriptstyle\times}}
\newcommand{\transp}{^\top}
\renewcommand{\d}{\mathrmsl{d}}
\newcommand{\into}{\hookrightarrow}
\newcommand{\sub}{\subseteq}
\newcommand{\dsum}{\oplus}
\renewcommand{\emptyset}{\varnothing}
\newcommand{\st}{\mathrel{|}}
\newcommand{\eps}{\varepsilon}
\newcommand{\Lam}{{\mathrmsl\Lambda}}
\newcommand{\im}{\mathop{\mathrm{im}}\nolimits}
\newcommand{\rank}{\mathop{\mathrm{rank}}\nolimits}
\newcommand{\grad}{\mathop{\mathrm{grad}}\nolimits}
\newcommand{\Hom}{\mathrm{Hom}}
\newcommand{\Stab}{\mathrm{Stab}}
\newcommand{\stab}{\mathfrak{stab}}
\newcommand{\spns}{\mathrm{span}}
\newcommand{\Proj}{\mathrm P}
\newcommand{\Id}{\mathit{Id}}
\newcommand{\gM}{\cE}
\newcommand{\mm}{{\boldsymbol\sigma}}
\newcommand{\al}{\alpha}
\newcommand{\lamc}{\lambda}
\newcommand{\wt}{\beta}
\newcommand{\afs}{\eps}
\newcommand{\As}{{\mathscr A}}
\newcommand{\Cb}{\Phi}
\newcommand{\Pol}{\Delta}
\newcommand{\Fa}{F}
\newcommand{\e}{e}
\newcommand{\ul}{{\mathbf u}}
\newcommand{\Ll}{{\mathbf L}}
\newcommand{\cG}{\mathcal{G}}
\newcommand{\Gr}{\mathrm{Gr}}
\begin{document}

\title{Toric contact geometry in arbitrary codimension}
\author[V. Apostolov]{Vestislav Apostolov}
\address{Vestislav Apostolov \\ D{\'e}partement de Math{\'e}matiques\\
UQAM\\ C.P. 8888 \\ Succursale Centre-ville \\ Montr{\'e}al (Qu{\'e}bec) \\
H3C 3P8 \\ Canada}
\email{apostolov.vestislav@uqam.ca}
\author[D.M.J. Calderbank]{David M. J. Calderbank}
\address{David M. J. Calderbank \\ Department of Mathematical Sciences\\
University of Bath\\ Bath BA2 7AY\\ UK}
\email{D.M.J.Calderbank@bath.ac.uk}
\author[P. Gauduchon]{Paul Gauduchon}
\address{Paul Gauduchon \\ Centre de Math\'ematiques\\
{\'E}cole Polytechnique \\ UMR 7640 du CNRS\\ 91128 Palaiseau \\ France}
\email{pg@math.polytechnique.fr}
\author[E. Legendre]{Eveline Legendre}
\address{Eveline Legendre\\ Universit\'e Paul Sabatier\\
Institut de Math\'ematiques de Toulouse\\ 118 route de Narbonne\\
31062 Toulouse\\ France}
\email{eveline.legendre@math.univ-toulouse.fr}
\thanks{V.A. is supported in part by IMI/BAS and an NSERC discovery grant. E.L. is partially supported by France ANR project EMARKS No ANR-14-CE25-0010. The authors are grateful to the Institute of Mathematics and Informatics of the Bulgarian Academy of Sciences, the London Mathematical Society, and the Labex CIMI (Toulouse) for hospitality and financial support. They would also like to thank  D. Dragnev for pointing out to them the references \cite{Bolle,Dragnev}, and R. Ponge for drawing their attention to fat distributions in subriemannian geometry.}

\date{\today}
\begin{abstract} We define toric contact manifolds in arbitrary codimension
and give a description of such manifolds in terms of a kind of labelled
polytope embedded into a grassmannian, analogous to the Delzant polytope of a
toric symplectic manifold.
\end{abstract}

\maketitle

\section*{Introduction}

Toric contact geometry lies at the interface of of two important themes in
contemporary geometry. The first is the use of toric methods, beginning with
toric varieties and toric symplectic manifolds, to provide large classes of
geometric objects which can be understood in an essentially combinatorial
way. The second is a growing interest in geometries where the tangent bundle
is not completely reducible, but is instead equipped with a distribution
(subbundle) or filtration. The simplest nontrivial example is a
$(2m+1)$-manifold $N$ equipped with a corank one distribution $\Ds\leq TN$
which is locally the kernel of a \emph{contact form}, i.e., a $1$-form
$\theta$ such that $\theta\wedge\d\theta^{\wedge m}$ is a nonvanishing volume
form. The restriction of $\d\theta$ to $\Ds$ defines a symplectic form on
$\Ds$, and thus $(N,\Ds)$ may be viewed as an odd-dimensional analogue of a
symplectic manifold called a \emph{contact manifold}.

Any contact manifold has a canonical \emph{symplectization}: the total space
of the annihilator $\Ds^0\leq T^*M$ of $\Ds$ inherits an exact $2$-form
$\Omega^\Ds$ from the tautological symplectic structure on $T^*M$, and
$\Omega^\Ds$ is nondegenerate, hence symplectic, on the complement of the zero
section. It follows that contact manifolds, like symplectic manifolds, have no
local invariants, and it is natural to extend symplectic techniques to contact
manifolds. In particular, a contact $(2m+1)$-manifold $N$ is \emph{toric} if
its symplectization is, which means that $N$ has an effective action of an
$(m+1)$-torus preserving the contact distribution $\Ds$, and such toric
contact manifolds have been extensively studied.

Our interest here is in extending this theory to higher codimension, more
precisely, to distributions $\Ds$ of corank $\ell>1$ in $TN$.  Such
distributions arise naturally in CR geometry~\cite{DragTom}, for example on
real submanifolds of codimension $\ell>1$ in a complex manifold, and in
subriemannian geometry~\cite{Mont}.  However, it is not immediately clear how
best to generalize the nondegeneracy condition, as the obstruction to
integrability of $\Ds$ is now a $2$-form on $\Ds$ with values in $TN/\Ds$,
called the \emph{Levi form}. We suggest that the most natural nondegeneracy
requirement for a contact manifold $(N,\Ds)$ in codimension $\ell$ is, as in
codimension one, the local existence of a contact form, which is now a $1$-form
$\theta$ with $\Ds\leq\ker\theta$ and $\d\theta|_\Ds$ nondegenerate. This means
equivalently that the Levi form of $\Ds$ has a nondegenerate component at every
point, which is not as weak as it may seem, because nondegeneracy is a
(fibrewise Zariski) open condition.  Thus any contact manifold has a
symplectization $U_\Ds\sub \Ds^0$ (with symplectic form induced from $T^*N$)
which meets each fibre of $\Ds^0\to N$ in a nonempty Zariski open subset. The
complement of $U_\Ds$ in $\Ds^0$ is then, fibrewise, the cone over a
projective variety (algebraically, a projective hypersurface of degree $m$,
where $\Ds$ has rank $2m$) in $\Proj(\Ds)$, which we call the
\emph{degeneracy variety} of $(N,\Ds)$. In particular, for $\ell>1$, a
contact manifold $(N,\Ds)$ has local invariants except in some low rank cases,
and local classification would involve understanding the deformation theory of
arbitrary projective hypersurfaces.

Despite these difficulties, \emph{toric} contact geometry in higher
codimension turns out to be surprisingly tractable. A contact manifold
$(N,\Ds)$ of dimension $2m+\ell$ and codimension $\ell$ has a symplectization
$U_\Ds$ of dimension $2(m+\ell)$, hence is toric if it has an effective action
of an $(m+\ell)$-torus preserving the contact distribution $\Ds$. In order to
avoid technical difficulties appearing already in codimension $1$, we also
require that there is an $\ell$-subtorus whose orbits are transverse to the
contact distribution. In this setting, we find that despite the presence of
local invariants, toric contact geometry is essentially encoded in a natural
generalization of the momentum polytope of a toric symplectic manifold, namely
a kind of polytope in the grassmannian of $\ell$-dimensional subspaces of the
dual of the Lie algebra of the torus (which is just the projectivization when
$\ell=1$). Thus the theory still has a strong combinatorial flavour, and the
local invariants arise simply because polytopes in a grassmannian are more
flexible than polytopes in an affine or projective space.

In more detail, the contents and main results of the paper are as follows.
After defining contact manifolds and their symplectizations in
\S\ref{s:contact-symplectic} (and giving some simple examples, including
products of odd dimensional spheres), we begin the study of contact actions in
\S\ref{s:lca-tr}. Here we introduce as Condition~\ref{cond:trans}, the
transversality property that we use to avoid some technicalities in the
theory. We also note that a local contact action satisfying this condition
induces a family of $2$-forms on $\Ds$ which we call \emph{Levi
  structures}. In a companion paper~\cite{lrtg}, we use nondegenerate Levi
structures, in the CR context, to construct K\"ahler metrics on quotients by
transverse actions.  While we do not pursue this angle in the present paper,
our approach is in part motivated by it.

We begin our study of contact actions of a torus by introducing the local
theory in \S\ref{s:ab-lca}. Here Levi structures are induced by \emph{Levi
  pairs} $(\g,\lamc)$, where $\g$ is a subalgebra of the infinitesimal torus
$\ab[N]$ acting on the contact manifold $N$ and $\lamc\in\g^*$ picks out a
component of the Levi form.  We introduce fundamental methodology,
Stratagem~\ref{str:setup}, for parametrizing such pairs $(\g,\lamc)$ by maps
to the quotients $\tor=\ab[N]/\g$ and $\torh=\ab[N]/\ker\lamc$. This stratagem
is closely linked to the affine geometry of momentum maps for hamiltonian
torus actions, which we review in Appendix~\ref{a:nmm}. We thus see that a
Levi pair provides an affine slice of the momentum map for the hamiltonian
$\ab[N]$ action on the symplectization $U_\Ds$ of $N$.

The central idea of the paper, the \emph{grassmannian momentum map} of
Definition~\ref{d:gmm}, is a way to package these slices together in a single
compact object (analogous to the polytope of a compact toric symplectic
manifold or orbifold) in the grassmannian $\Gr_\ell(\ab[N]^*)$ of
$\ell$-dimensional subspaces of $\ab[N]^*$, which we call the
\emph{grassmannian image} of $N$.

When the $\ab[N]$ action integrates to an effective contact action of a torus
$\Ab[N]$ on $N$, properties of the grassmannian image can be derived from
fairly standard properties of the $\Ab[N]$ action on $N$ and $U_\Ds$. In
\S\ref{s:ss-torus}, we apply the standard theory of symplectic slices to
obtain a local model for these actions. Then in \S\ref{s:orbit-conv}, we
obtain a connectedness and convexity result for the slices of $\ab[N]$ action
on $U_\Ds$ given by a Levi pair. This is essentially a special case of a
convexity result for transverse symplectic foliations obtained independently
by Ishida~\cite{Ish}, and follows easily from the methods of
Atiyah~\cite{Atiyah}.

These preparations lead us to our main results on toric contact manifolds.  In
section~\S\ref{s:mfd-w-c}, we show that the quotient of $N$ by $\Ab[N]$ is a
simple polytope in the sense that it is at least a manifold with corners.  We
improve this in \S\ref{s:tsc}, where we show that any Levi pair $(\g,\lamc)$
provides a diffeomorphism of the quotient $N/\Ab[N]$ with a simple polytopes
in the affine subspace of $\ab[N]^*$ defined by $(\g,\lamc)$. This shows that
the grassmannian momentum map induces an embedding of $N/\Ab[N]$ into
$\Gr_\ell(\ab[N]^*)$ whose image is polyhedral submanifold with corners as in
Definition~\ref{d:polyhedral}.

We would like to prove that toric contact manifolds are classified by their
grassmannian image, with a suitable labelling of the codimension one faces.
This is true locally by the local models obtained in \S\ref{s:ss-torus} using
symplectic slices.  However, as in toric symplectic geometry~\cite{LT} and
toric contact geometry in codimension one~\cite{Lerman-toric}, there remains a
local-to-global question governed by the cohomology of a sheaf. Thus we have
the following result.

\begin{result} The grassmannian image $\Xi$ of a compact toric contact
manifold of Reeb type $(N,\Ds,\bK)$ is Delzant and polyhedral of Reeb type in
$\Gr_\ell(\ab[N]^*)$, and there is a sheaf $\con^\T(\Ds)$ on $\Xi$ such that
toric contact manifolds of Reeb type with grassmannian image $\Xi$ are
parametrized up to isomorphism by $H^1(\Xi,\con^\T(\Ds))$.
\end{result}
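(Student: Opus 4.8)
The plan is to reduce this classification theorem to the local models established in §\ref{s:ss-torus} together with the convexity/connectedness results of §\ref{s:orbit-conv}–§\ref{s:tsc}, and then run the standard local-to-global sheaf-cohomology argument familiar from toric symplectic geometry (Lerman--Tolman~\cite{LT}) and codimension-one toric contact geometry (Lerman~\cite{Lerman-toric}). First I would verify that the grassmannian image $\Xi$ of a compact toric contact manifold of Reeb type is Delzant and polyhedral of Reeb type. That $\Xi$ is polyhedral with corners is already proved in §\ref{s:tsc}: fixing a Levi pair $(\g,\lamc)$, the image of $N/\Ab[N]$ in the affine slice of $\ab[N]^*$ is a simple polytope, and the grassmannian momentum map embeds this as a polyhedral submanifold with corners in $\Gr_\ell(\ab[N]^*)$. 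The Delzant (smoothness) condition on the labelling of codimension-one faces is local, so it follows from the symplectic-slice local models of §\ref{s:ss-torus}: near any face, the contact action looks like the standard toric model, which forces the normals of the facets meeting at a vertex to form a $\Z$-basis of the integral lattice of $\ab[N]$ (transported to the slice), exactly the Delzant condition. The Reeb-type hypothesis on $\bK$ ensures the relevant slice is genuinely a compact polytope rather than an unbounded polyhedron, so this part is essentially bookkeeping.

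Next I would construct the sheaf $\con^\T(\Ds)$ on $\Xi$ and identify what it classifies. Over each open face $F$ of $\Xi$ (a stratum of the manifold-with-corners structure), the local models of §\ref{s:ss-torus} produce, uniquely up to isomorphism, a local toric contact manifold with that piece of grassmannian image; the ambiguity in gluing two such local pieces over an overlap is measured by the automorphisms of the local model fixing the face data, and these automorphisms assemble into a sheaf of abelian groups — this is $\con^\T(\Ds)$, the sheaf of (torus-equivariant, contact) infinitesimal/germ-level symmetries compatible with the labelled polytope. Concretely, over a facet with normal $\nu$ one expects the stalk to record the freedom in the transverse circle action (a copy of the relevant one-parameter subgroup), matching the codimension-one picture in~\cite{Lerman-toric}; over higher-codimension faces the stalk is the intersection of the adjacent facet contributions. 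One must check this is a genuine sheaf (restriction maps, gluing axiom) — routine once the local models are pinned down — and that it is the natural coefficient system for a Čech-type obstruction.

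Then the classification itself: given $\Xi$ Delzant and polyhedral of Reeb type, cover it by face-open sets on which §\ref{s:ss-torus} provides local toric contact manifolds; any two such local pieces agree over overlaps up to an element of $\con^\T(\Ds)$, so the cocycle conditions for patching are exactly a Čech $1$-cocycle with values in $\con^\T(\Ds)$, and changing the local trivializations changes it by a coboundary. Conversely every class in $H^1(\Xi,\con^\T(\Ds))$ is realized by such a patching (one must check the patched object is Hausdorff, compact, and globally of Reeb type — compactness and the corner structure come from $\Xi$ being a compact manifold with corners, and the Reeb field glues because it is determined by $\bK$, which lives on $\Xi$). The convexity result of §\ref{s:orbit-conv} is what guarantees the patched momentum data is consistent, i.e.\ that the affine slices from a Levi pair really do glue to the prescribed $\Xi$. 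This gives the bijection between isomorphism classes and $H^1(\Xi,\con^\T(\Ds))$.

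The main obstacle is the identification and functoriality of the sheaf $\con^\T(\Ds)$: in codimension one the transverse direction is a line and the gluing sheaf is essentially a locally constant sheaf twisted by the facet normals, but for $\ell>1$ the "degeneracy variety" and the presence of local invariants (the deformation theory of projective hypersurfaces, flagged in the introduction) mean one must be careful that the sheaf really only sees the \emph{toric} data — i.e.\ that the local invariants of the contact structure are already encoded in the grassmannian image and do not contribute additional gluing freedom beyond $\con^\T(\Ds)$. Making precise that the symplectic-slice local models of §\ref{s:ss-torus} are rigid relative to the labelled grassmannian polytope (so that $H^0$ of the automorphism sheaf acts as expected and $H^1$ is the full obstruction) is the technical heart; once that rigidity is in hand, the rest is the by-now-standard Čech argument.
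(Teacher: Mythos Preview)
Your overall architecture is right---the Delzant/polyhedral/Reeb-type properties do follow from \S\ref{s:tsc} and the slice models, and the classification does proceed by a \v Cech argument over a face cover of $\Xi$---but there is a genuine gap in how you identify the classifying sheaf. The gluing ambiguity on overlaps is measured by $\Ab[N]$-equivariant \emph{contactomorphisms} inducing the identity on the orbit space: this is a sheaf $\Con^\T_0(\Ds)$ of groups, not the sheaf $\con^\T(\Ds)$ of $\Ab[N]$-invariant contact \emph{vector fields} that appears in the statement. You conflate these when you write that ``these automorphisms assemble into a sheaf of abelian groups --- this is $\con^\T(\Ds)$''. The passage between them is exactly the technical heart that your plan omits: one must prove an exact sequence $0\to\underline{2\pi\Lam}\to\con^\T(\Ds)\to\Con^\T_0(\Ds)\to 0$, where surjectivity of the time-$1$ map is nontrivial. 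The paper lifts an orbit-preserving contactomorphism $\phi$ to the symplectization, invokes Haefliger--Salem~\cite{HS} to write $\tilde\phi(\alpha)=\exp(\tilde v(\alpha))\cdot\alpha$, reduces $\tilde v$ to a pullback $p^*v$, and then checks via an integration argument (using that the flow $\psi_t$ acts trivially on basic forms) that the resulting vector field satisfies $\cL_{\tilde X}\tau^\Ds=0$, hence is contact. Only after this does the long exact sequence in cohomology, combined with contractibility of $\Xi$ (so $H^i(\Xi,\underline{2\pi\Lam})=0$ for $i\geq1$), yield $H^1(\Xi,\Con^\T_0(\Ds))\cong H^1(\Xi,\con^\T(\Ds))$.

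Your description of the stalks of $\con^\T(\Ds)$ is also off: it is not ``a copy of the relevant one-parameter subgroup'' attached to facets. It is the sheaf of $\Ab[N]$-invariant contact vector fields on $N$, which the paper identifies (after choosing a transverse subtorus $G$ with $M=N/G$) with $\Ab[M]$-invariant $\g$-valued functions $f$ on $M$ satisfying $\d f=-\iota_X\omega$ for some $X$. In codimension one this is just the sheaf of smooth functions on $\Xi$, hence fine with vanishing $H^1$; for $\ell>1$ it is the solution sheaf of an overdetermined PDE which need not be fine, and this is precisely why the paper leaves the vanishing of $H^1(\Xi,\con^\T(\Ds))$ as an open question outside special cases such as products.
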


The terminology in this theorem is defined in~\S\ref{s:gr-im}, where we prove
that the grassmannian image has these properties, and establish the
uniqueness, modulo $H^1(\Xi,\con^\T(\Ds))$, of $(N,\Ds,\bK)$ given $\Xi$.
Then in \S\ref{s:exists}, we prove that any such $\Xi$ arises in this way.

In contrast to the symplectic and codimension one cases, $\con^\T(\Ds)$ is the
sheaf of solutions of a linear partial differential equation (for
infinitesimal contactomorphisms), which is overdetermined and typically not
involutive. We show that $H^1(\Xi,\con^\T(\Ds))$ vanishes in special cases,
such as when $(N,\Ds,\bK)$ is a product of codimension one toric contact
manifolds. However, it remains an open question whether it vanishes in
general.

\section{Contact geometry in arbitrary codimension}

\subsection{Levi nondegenerate distributions and symplectization}
\label{s:contact-symplectic}

Let $N$ be smooth manifold of real dimension $2m+\ell$ equipped with a rank
$2m$ vector distribution $\Ds\leq TN$.  Let $\Ds^0$ be the annihilator of
$\Ds$, which is a rank $\ell$ subbundle of $T^*N$. We thus have dual short
exact sequences of vector bundles
\begin{align*}
0\to \Ds &\to TN \xrightarrow{q_D} TN/\Ds\to 0\\
0\to \Ds^0 &\to T^*N \to \Ds^* \to 0,
\end{align*}
where the transpose of the quotient $q_\Ds\colon TN\to TN/\Ds$ identifies
$\Ds^0$ canonically with $(TN/\Ds)^*$: thus we identify an element or section
$\al$ of $(TN/\Ds)^*$ with $\al\circ q_\Ds$ in $\Ds^0$.

\begin{defn} The \emph{Levi form} $\Lv_\Ds\colon \Wedge^2 \Ds\to TN/\Ds$
of $\Ds\leq TN$ is defined, using sections $X,Y\in \Gamma(\Ds)$, by the
tensorial expression
\begin{equation}\label{eq:levi-form}
\Lv_\Ds(X,Y) = - q_\Ds([X,Y]).
\end{equation}
The \emph{nondegeneracy locus} of $\Ds$ is the open subset
\[
U_\Ds=\{\al\in \Ds^0\cong (TN/\Ds)^*\st\al\circ\Lv_\Ds\text{ is nondegenerate}\}
\]
of $\Ds^0$. We say $\Ds$ is \emph{Levi nondegenerate}, and $(N,\Ds)$ is
\emph{contact of rank $m$ and codimension $\ell$}, if $U_\Ds$ has nonempty
intersection with each fibre of $\Ds^0$ over $N$.  A (local) section of
$U_\Ds$ is called a (local) \emph{contact form} on $N$. We often describe
$U_\Ds$ by its complement, which is (the bundle of cones over) the
\emph{degeneracy variety}
\[
V_\Ds=\{[\al]\in\Proj(\Ds^0)\st \al\circ \Lv_\Ds\text{ is degenerate}\},
\]
viewed as a bundle of submanifolds of the fibrewise projectivization
$\Proj(\Ds^0)$ of $\Ds^0$.
\end{defn}

Our normalization of the Levi form is chosen so that for any section $\al$ of
$\Ds^0$, the restriction of $\d\al$ to $\Wedge^2\Ds\leq \Wedge^2TN$ is
$\al\circ \Lv_\Ds$. A Levi nondegenerate distribution $\Ds$ has local frames
$\al_1,\ldots \al_\ell$ for $\Ds^0$ in $U_\Ds$, and hence each fibre of the
degeneracy variety $V_\Ds$ is the projective hypersurface of degree $m$ where
$(\sum_{i=1}^\ell t_i\al_i)\circ\Lv_\Ds$ degenerates (i.e., the determinant, a
homogeneous degree $m$ polynomial in the $\ell$ variables $t_1,\ldots t_\ell$,
vanishes).

\begin{exs}\label{ex:contact} \begin{numlist}
\item The projectivization $\Proj(W)=W\mult/\R\mult\cong\R P^{2m+1}$ of a
  symplectic vector space $(W,\omega)$ (with $\dim W=2m+2$) is contact of
  codimension one, where $\Ds_\xi=\Hom(\xi,\xi^\perp/\xi)\leq
  \Hom(\xi,W/\xi)=T_\xi \Proj(W)$ and $\xi^\perp$ is the orthogonal space to
  $\xi$ with respect to $\omega$; thus $U_\Ds=\Ds^0\setminus 0$ and the
  degeneracy variety $V_\Ds$ is empty. The odd dimensional sphere
  $\Sph^{2m+1}\cong W\mult/\R^+$ (the space of rays in $W$) is also contact,
  being a double cover of $\Proj(W)$.
\item If $(N_i,\Ds_i)$ are contact manifolds, with codimensions $\ell_i$,
for $i\in\{1,\ldots n\}$, then so is $(\prod_{i=1}^n N_i,
\Ds_1\dsum\cdots\dsum \Ds_n)$, with codimension $\ell=\ell_1+\cdots +\ell_n$
and $U_\Ds=\prod_{i=1}^n U_{\Ds_i}$. In particular, the product of $n=\ell$
codimension one spheres $\Sph^{2m_1+1}\times \cdots \times \Sph^{2m_\ell+1}$
is a contact manifold with codimension $\ell$, where each fibre of $U_\Ds$ is
the disjoint union of the $2^\ell$ open $\ell$-quadrants (simple cones) spanned
by $\pm\eta_1,\dots, \pm\eta_\ell$. Each fibre of the degeneracy variety
$V_\Ds$ is thus an union of $\ell$-hyperplanes (i.e., the facets of an
$(\ell-1)$-simplex) with multiplicities $m_i$.
\item Another special case, studied in \cite{Bolle, Dragnev}, is that of {\it
  $\ell$-contact manifolds}. In the terminology of this paper, these are
  contact manifolds $(N,\Ds)$ of rank $m$ and codimension $\ell$ for which the
  Levi form $\Lv_{\Ds}\colon\Wedge^2\Ds\to TN/\Ds$ has rank one (but this
  component is nondegenerate) at every point. Equivalently, each fibre of the
  degeneracy variety $V_\Ds$ is a single hyperplane of multiplicity $m$.
\item The case that the degeneracy variety $V_\Ds$ is empty (i.e., in each
  fibre, it has no real points) has been studied in subriemannian geometry
  as a natural generalization of the codimension one case~\cite{Mont}. This
  condition means equivalently that for any $z\in N$ and any nonzero $X\in
  T_z N$, $\Lv_\Ds(X,\cdot)\colon \Ds_z\to T_zN/\Ds_z$ is surjective. A
  distribution with this property is called~\emph{fat}.
\end{numlist}
\end{exs}

\begin{defn} A diffeomorphism $\Psi$ between contact manifolds $(N,\Ds)$
and $(N',\Ds')$ is called a \emph{contactomorphism} if $\Psi_*(\Ds)
=\Psi^*\Ds'\leq \Psi^* TN'$. We denote by $\con(N,\Ds)$ the Lie algebra of
\emph{infinitesimal contactomorphisms} of $(N,\Ds)$, i.e., the space of vector
fields $X$ on $N$ such that
\begin{equation*}
\cL_X \Gamma(\Ds)\sub\Gamma(\Ds).
\end{equation*}
\end{defn}

We now show that $U_\Ds\sub\Ds^0\leq T^*N$ (or any connected component of
$U_\Ds$) provides a ``symplectization'' of $N$, to which contactomorphisms
lift as hamiltonian vector fields. For this recall that the \emph{tautological
  $1$-form} $\tau$ on $T^*N$ is defined by $\tau_\alpha=\alpha\circ p_*$,
where $\alpha\in T^*N$ and $p_*$ is the differential of the projection
$p\colon T^*N \to N$. The latter fits into an exact sequence
\begin{equation}\label{eq:cot-es}
0\to p^*T^*N\to T(T^*N)\stackrel{p_*}{\to} p^*TN\to 0,
\end{equation}
where (the image of) $p^*T^*N$ is the vertical bundle of $T^*N$. The
\emph{canonical symplectic form} on $T^*N$ is then $\Omega=\d\tau$. For any
local section $\alpha$ of $T^*N$, $\alpha^*\tau=\alpha$ and hence
$\alpha^*\Omega=\alpha^*\d\tau=\d(\alpha^*\tau)=\d\alpha$. Since $\tau$
vanishes on $p^*T^*N$, so does $\Omega$, and the induced pairing between
$p^*T^*N$ and $p^*TN$ the natural one: for any $1$-form $\alpha$ and any lift
$\tilde X$ of a vector field $X$ on $N$, $\Omega(p^*\alpha,\tilde
X)=\d(\tau(\tilde X))(p^*\alpha)=\alpha(X)$, since $\tau(\tilde X)\colon
T^*N\to \R$ is evaluation at $X$, which is linear on the fibres of $p$.

Any vector field $X$ on $N$ admits a \emph{natural lift} to $T^*N$, whose
local flow is the natural lift of the local flow of $X$. For further use, we
recall the following well-known facts.

\begin{prop} The natural lift of any vector field $X$ on $N$ to $TN$ is the
  unique lift $\tilde X$ with $\cL_{\tilde X}\tau=0$; it is hamiltonian with
  respect to $\Omega$, with momentum map $\alpha\mapsto\alpha(X)$, and is
  given by $\tilde X_\alpha=\alpha_*(X)-p^*\cL_X\alpha$ for any extension of
  $\alpha\in T^*N$ to a local section.
\end{prop}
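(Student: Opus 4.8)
The plan is to establish the four assertions in order, the work being: invariance of the tautological form $\tau$ under cotangent lifts (for $\cL_{\tilde X}\tau=0$), nondegeneracy of $\Omega$ (for uniqueness), Cartan's formula (for the hamiltonian statement and the momentum map), and a short computation in canonical coordinates (for the explicit formula). First I would recall that the local flow $\varphi_t$ of $X$ on $N$ lifts to a local flow $\tilde\varphi_t$ on $T^*N$, the natural lift of $\varphi_t$, each $\tilde\varphi_t$ being the cotangent lift of $\varphi_t$; by definition its infinitesimal generator is the natural lift $\tilde X$ of $X$, and since $\tilde\varphi_t$ covers $\varphi_t$ we have $p_*\tilde X=X$. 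Since $\tau$ is invariant under the cotangent lift of any local diffeomorphism of $N$ — an immediate consequence of the tautological identity $\alpha^*\tau=\alpha$ together with naturality — we get $\tilde\varphi_t^*\tau=\tau$, and differentiating at $t=0$ gives $\cL_{\tilde X}\tau=0$. For uniqueness, if $\tilde X'$ is another lift of $X$ with $\cL_{\tilde X'}\tau=0$, then $V:=\tilde X'-\tilde X$ is a section of the vertical bundle $p^*T^*N$, so $\iota_V\tau=0$ because $\tau$ vanishes on $p^*T^*N$; Cartan's formula then gives $0=\cL_V\tau=\d\iota_V\tau+\iota_V\d\tau=\iota_V\Omega$, and nondegeneracy of $\Omega$ forces $V=0$.

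For the hamiltonian statement, Cartan's formula and $\cL_{\tilde X}\tau=0$ give $\cL_{\tilde X}\Omega=\d\cL_{\tilde X}\tau=0$ (so $\tilde X$ is symplectic) and $\iota_{\tilde X}\Omega=\iota_{\tilde X}\d\tau=-\d(\iota_{\tilde X}\tau)$. Now $\iota_{\tilde X}\tau$ evaluated at $\alpha\in T^*N$ is $\tau_\alpha(\tilde X_\alpha)=\alpha(p_*\tilde X_\alpha)=\alpha(X)$, since $\tilde X$ covers $X$; hence $\iota_{\tilde X}\Omega=-\d\mu$ with $\mu(\alpha)=\alpha(X)$, which exhibits $\tilde X$ as hamiltonian with momentum map $\alpha\mapsto\alpha(X)$, the sign convention being that of the identity $\Omega(p^*\alpha,\tilde X)=\alpha(X)$ displayed above.

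Finally, for the explicit formula, by the uniqueness already proved it suffices to check that, along the image of a local section $\alpha$ of $T^*N$, the vector field $Y:=\alpha_*(X)-p^*\cL_X\alpha$ is independent of the chosen extension of $\alpha\in T^*N$, covers $X$, and satisfies $\cL_Y\tau=0$; it must then coincide with $\tilde X$. That $Y$ covers $X$ is immediate from $p\circ\alpha=\Id_N$ and the verticality of $p^*(\cdot)$. For the other two points I would pass to canonical coordinates $(x^i,p_i)$ on $T^*N$: writing $X=\sum_j b^j\,\partial_{x^j}$, a short calculation shows that the terms of $\alpha_*(X)$ and $p^*\cL_X\alpha$ involving the derivatives of the components of $\alpha$ cancel, leaving the classical cotangent lift $Y=\sum_j b^j\,\partial_{x^j}-\sum_{i,j}p_j\,(\partial b^j/\partial x^i)\,\partial_{p_i}$, which is manifestly extension-independent, and one then checks directly that $\cL_Y\bigl(\sum_i p_i\,\d x^i\bigr)=0$. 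The only genuine computation in the whole proof is this coordinate cancellation — that the $1$-jet dependence of $\alpha_*(X)$ is exactly absorbed by that of $\cL_X\alpha$ — and, apart from that, the only real point of care is tracking the sign convention for ``hamiltonian''; everything else is formal.
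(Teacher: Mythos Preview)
Your proof is correct and follows the same overall structure as the paper's: both establish $\cL_{\tilde X}\tau=0$ via naturality of $\tau$ under cotangent lifts, deduce the hamiltonian property and momentum map via Cartan's formula, and obtain uniqueness from nondegeneracy of $\Omega$. The only methodological difference is in the verification of the explicit formula $\tilde X_\alpha=\alpha_*(X)-p^*\cL_X\alpha$: the paper argues coordinate-free, pulling back $\iota_{\tilde X}\Omega=-\d(\alpha(X))$ along the section $\alpha$, rewriting $-\d(\alpha(X))=\iota_X\d\alpha-\cL_X\alpha$, and identifying these two terms as $\alpha^*(\iota_{\alpha_*(X)}\Omega)$ and $\alpha^*(\iota_{p^*\cL_X\alpha}\Omega)$ respectively, then concluding by nondegeneracy of $\Omega$. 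You instead pass to canonical coordinates $(x^i,p_i)$ and exhibit the cancellation of the $\partial\alpha_i/\partial x^j$ terms by hand. The paper's route is slightly slicker and makes section-independence manifest without any computation, whereas your coordinate check is more explicit; neither approach involves an idea absent from the other.
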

\begin{proof} The natural lift of any diffeomorphism of $N$ to $T^*N$
preserves $\tau$ (by its tautological construction), and hence the natural
lift of $X$ preserves $\tau$. On the other hand for any vector field $\tilde
X$ on $T^*N$ preserving $\tau$, $0=\cL_{\tilde X}\tau= \iota_{\tilde X} \Omega
+ \d(\tau(\tilde X))$, so $\tilde X$ has hamiltonian $\tau(\tilde X)$. If
$\tilde X$ is a lift of $X$ then $\tau(\tilde X)(\alpha)=\alpha(X)$, which
determines $\tilde X$ uniquely from $X$. For the final formula, observe that
$\alpha^*(\iota_{\tilde X}\Omega) =- \d(\alpha(X))
=\iota_X\d\alpha-\cL_X\alpha=
\alpha^*(\iota_{\alpha_*(X)}\Omega-\iota_{p^*\cL_X\alpha}\Omega)$, since
$\alpha^*(\iota_{\alpha_*(X)}\Omega)=\iota_X(\alpha^*\Omega)= \iota _X \d
\alpha$; from this the result follows by nondegeneracy of $\Omega$.
\end{proof}

Let $\tau^\Ds$ be the pullback of $\tau$ to $\Ds^0\leq T^*N$, so that
$\Omega^\Ds:=\d\tau^\Ds$ is the pullback of $\Omega$. Henceforth, $p$ will
denote the bundle projection of $\Ds^0$ rather than $T^*N$ above, and we now
have an exact sequence
\begin{equation}\label{eq:ds0-es}
0\to p^*\Ds^0\to T\Ds^0\stackrel{p_*}{\to} p^*TN\to 0.
\end{equation}

\begin{prop}\label{p:con} For a contact manifold $(N,\Ds)$, $U_\Ds$ is the open
subset of $\Ds^0$ on which $\Omega^\Ds$ is nondegenerate, hence a symplectic
form.  Also, for any contact form $\alpha\colon N\to U_\Ds$,
$TN=\Ds\oplus\Rb\alpha$, where $\Rb\alpha$ is the distribution orthogonal to
$\Ds$ via $\d\alpha$.  The natural lift of any $X\in\con(N,\Ds)$ to $T^*N$ is
tangent to $\Ds^0$, and its restriction defines an isomorphism between
$\con(N,\Ds)$ and the space of projectable vector fields $\tilde X$ on $\Ds^0$
with $\cL_{\tilde X}\tau^\Ds=0$.
\end{prop}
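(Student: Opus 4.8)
The plan is to treat the three assertions separately, reducing each to Cartan's formula, the exact sequence \eqref{eq:ds0-es}, and the two normalizations already recorded: $\alpha^*\Omega^\Ds=\d\alpha$ for a local section $\alpha$ of $\Ds^0$, and $\d\alpha|_{\Wedge^2\Ds}=\alpha\circ\Lv_\Ds$.

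For the first assertion I would fix $\alpha\in\Ds^0$ over $z\in N$ together with a local section of $\Ds^0$ through it, still written $\alpha$; its differential refines \eqref{eq:ds0-es} to a splitting $T_\alpha\Ds^0=\Ds^0_z\oplus\alpha_*(T_zN)$. Then $\Omega^\Ds$ vanishes on the vertical summand $\Ds^0_z$ (being a pullback of $\Omega$, which kills vertical vectors), restricts to $\alpha^*\Omega^\Ds=\d\alpha$ on the horizontal summand $\alpha_*(T_zN)$, and pairs the two summands by $(\beta,\alpha_*X)\mapsto\beta(X)$, whose kernel on the second factor is exactly $\Ds_z$ since $\Ds^0_z=(T_zN/\Ds_z)^*$. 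A short chase of an element $(\beta,\alpha_*X)$ of $\ker\Omega^\Ds$ through these three data then shows $X\in\Ds_z$ and $X\in\ker(\alpha\circ\Lv_\Ds)$ (using $\d\alpha|_{\Ds_z}=(\alpha\circ\Lv_\Ds)_z$), and conversely that every such $X$, paired with the correction $\beta=-\d\alpha(X,\cdot)$, lies in the kernel; hence $\Omega^\Ds$ is nondegenerate at $\alpha$ precisely when $\alpha\circ\Lv_\Ds$ is nondegenerate on $\Ds_z$, i.e.\ when $\alpha\in U_\Ds$. The second assertion then falls out: for a contact form $\alpha$ the bundle map $TN\to\Ds^*$, $X\mapsto\d\alpha(X,\cdot)|_\Ds$, restricts on $\Ds$ to the isomorphism induced by the nondegenerate $\d\alpha|_\Ds$, so it is onto with kernel $\Rb\alpha$ of rank $\ell$; thus $\Ds\cap\Rb\alpha=\ker(\d\alpha|_\Ds)=0$ and $\Ds\oplus\Rb\alpha=TN$ by counting ranks.

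For the third assertion I would first record the elementary Leibniz-rule fact that $X\in\con(N,\Ds)$ if and only if $\cL_X\Gamma(\Ds^0)\subseteq\Gamma(\Ds^0)$. If $X\in\con(N,\Ds)$, then in the formula $\tilde X_\beta=\beta_*(X)-p^*\cL_X\beta$ for the natural lift (with $\beta$ a local section of $\Ds^0$) both terms are tangent to $\Ds^0$, so $\tilde X$ is; its restriction is projectable over $X$ and satisfies $\cL_{\tilde X}\tau^\Ds=0$ (pulling back $\cL_{\tilde X}\tau=0$), and $X\mapsto\tilde X|_{\Ds^0}$ is injective because $\tilde X$ projects to $X$. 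For surjectivity, let $\hat X$ be projectable on $\Ds^0$ over a field $X$ on $N$ with $\cL_{\hat X}\tau^\Ds=0$, and let $\tilde X$ be the natural lift of $X$ to $T^*N$. Along $\Ds^0$ the difference $Z=\tilde X-\hat X$ is $p$-vertical, and Cartan's formula, together with the fact that $\tau^\Ds(\hat X)$ and $\tau(\tilde X)$ are both the fibrewise-linear function $\alpha\mapsto\alpha(X)$, gives $\iota_{\hat X}\Omega^\Ds=-\d(\tau^\Ds(\hat X))$ equal to the pullback to $\Ds^0$ of $\iota_{\tilde X}\Omega=-\d(\tau(\tilde X))$; hence $\Omega(Z,\cdot)$ annihilates $T\Ds^0$, and since $\Omega(p^*\gamma,v)=\gamma(p_*v)$ with $p_*\colon T\Ds^0\to p^*TN$ onto, $Z=0$. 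Therefore $\hat X=\tilde X|_{\Ds^0}$ and $\tilde X$ is tangent to $\Ds^0$; feeding this back into $\tilde X_\beta=\beta_*(X)-p^*\cL_X\beta$ forces $p^*\cL_X\beta$, hence $\cL_X\beta$, to be tangent to $\Ds^0$ for every $\beta\in\Gamma(\Ds^0)$, i.e.\ $\cL_X\Gamma(\Ds^0)\subseteq\Gamma(\Ds^0)$, so $X\in\con(N,\Ds)$.

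The Cartan-formula and exact-sequence manipulations are routine; the two places that need genuine care are the pointwise linear algebra in the first assertion, where one must keep track of which summand is isotropic and check that the Levi form reappears exactly as $\d\alpha|_\Ds$, and the vanishing-of-$Z$ step in the surjectivity argument, which is what actually forces the base field of a $\tau^\Ds$-preserving projectable lift to be an infinitesimal contactomorphism.
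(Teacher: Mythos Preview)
Your argument is correct. For the first two assertions you carry out the same linear-algebra reduction as the paper (vertical--horizontal splitting, the pairing $\Ds^0_z\times T_zN/\Ds_z$, and $\d\alpha|_\Ds=\alpha\circ\Lv_\Ds$), only with more of the kernel-chase written out; the paper states the same conclusion in one line.

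For the third assertion the forward direction matches the paper, but your converse takes a genuinely different route. The paper first proves $X\in\con(N,\Ds)$ directly, by evaluating $\cL_{\tilde X}\tau^\Ds$ on a lift $\tilde Y$ of $Y\in\Gamma(\Ds)$ to obtain $0=-\alpha(\cL_XY)$ for all $\alpha\in\Ds^0$; it then argues uniqueness (that $\tilde X$ is the natural lift) from the hamiltonian condition $\iota_{\tilde X}\Omega^\Ds=-\d f_X$ on the dense open set $U_\Ds$. You reverse the order: you first show the given $\hat X$ coincides with the restriction of the natural lift by comparing their $\Omega$-contractions on $T\Ds^0$ and using surjectivity of $p_*\colon T\Ds^0\to p^*TN$, and only then read off $X\in\con(N,\Ds)$ from the explicit formula $\tilde X_\beta=\beta_*(X)-p^*\cL_X\beta$. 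Your version has the small advantage of not invoking density of $U_\Ds$ (hence not the contact hypothesis) at that step, while the paper's direct computation reaches $X\in\con(N,\Ds)$ with less apparatus and without passing through the ambient $T^*N$.
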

\begin{proof} The restriction of $\Omega^\Ds_\al$ to $V_\al(\Ds^0)\times T_zN$
descends to the canonical nondegenerate pairing between $V_\al(\Ds^0)\cong
\Ds^0_z$ and $T_zN/\Ds_z$.  Hence $\Omega_\alpha$ is nondegenerate if and only
if $\alpha^*\Omega^\Ds=\d\alpha$ is nondegenerate on $\Ds$ for any extension
of $\alpha$ to a local section.  But $\d\alpha|_\Ds=\al\circ\Lv_\Ds$, which
proves the first part. The second part is immediate from the nondegeneracy of
$\d\alpha$ on $\Ds$.

For the last part, observe that the local flow of $X\in\con(N,\Ds)$ preserves
$\Ds$, hence its natural lift to $T^*N$ preserves $\Ds^0$, i.e., the induced
vector field is tangent to $\Ds^0$. The restriction $\tilde X$ to $\Ds^0$ is
clearly a lift of $X$, hence projectable, and $\cL_{\tilde X}\tau^\Ds=0$ since
$\tau^\Ds$ is the pullback of $\tau$. Conversely if a lift $\tilde X$ of $X$
to $\Ds$ satisfies $\cL_{\tilde X}\tau^\Ds=0$ then for any section $Y$ of
$\Ds$, any lift $\tilde Y$ of $Y$ to $\Ds^0$, and any $\alpha\in\Ds^0$,
$0=(\cL_{\tilde X}\tau^\Ds)_\alpha(\tilde Y)=-\tau^\Ds_\alpha([\tilde X,\tilde
  Y]) =\alpha(p_*[\tilde X,\tilde Y])=\alpha([p_*\tilde X,p_*\tilde Y])=
\alpha(\cL_XY)$. Hence $\cL_XY$ is a section of $\Ds$, i.e.,
$X\in\con(N,\Ds)$.  Furthermore, if $f_X\colon \Ds^0\to \R$ is defined by
$f_X(\alpha)=\alpha(X)$ then $\d f_X +\iota_{\tilde X} \Omega^\Ds=0$, which
determines $\tilde X$ uniquely from $X$ on $U_\Ds$, hence on $\Ds^0$, since
$U_\Ds$ is dense in $\Ds^0$.
\end{proof}
We refer to $\Rb\alpha$ as the \emph{Reeb distribution} of $\alpha$.

If we identify $\Ds^0$ with $(TN/\Ds)^*$, then the function $f_X\colon\Ds^0\to
\R$ with $f_X(\alpha)=\alpha(X)$ is the fibrewise linear form defined by
$q_\Ds(X)\in TN/\Ds$. Since $f_X$ is a hamiltonian for the lift $\tilde X$ on
$U_\Ds$, it follows that $\tilde X$ is in fact uniquely determined by
$q_\Ds(X)$.

\begin{rem} The degeneracy variety is a local contact invariant of a contact
manifold $(N,\Ds)$. For example $\Sph^1\times\Sph^5$ and $\Sph^3\times\Sph^3$
cannot be even locally contactomorphic because the degeneracy variety of the
latter has two points in each fibre, whereas the former has only one (with
multiplicity two). Also the lift $\tilde X$ to $\Ds^0$, for any $X\in
\con(N,\Ds)$ must preserve (the cone over) the degeneracy variety. Thus
although $\con(N,\Ds)$ may be identified with a linear subspace of
$\Gamma(TN/\Ds)$, this linear subspace is typically small. 
\end{rem}

\subsection{Local contact actions and transversality}\label{s:lca-tr}

Effective actions of Lie groups by contactomorphisms on $(N,\Ds)$ may be
described locally as follows.

\begin{defn} A (\emph{local, effective}) \emph{contact action} of a Lie
algebra $\g$ on a contact manifold $(N,\Ds)$ is a Lie algebra monomorphism
$\bK\colon\g\to\con(N,\Ds)$.  For $v\in\g$, we write $K_v$ for the induced
vector field $\bK(v)$, and we define $\kappa^\g\colon N\times\g\to TN$ by
$\kappa^\g(z,v)=K_{v,z}$. Let $\Rb\g\leq TN$ be the image of $\kappa^\g$, i.e.,
${\Rb\g}_z:=\spns\{K_{v,z}\st v\in\g\}$.
\end{defn}
\begin{obs}\label{o:lift-ham-act} Let $\bK\colon\g\to\con(N,\Ds)$ be a
local contact action of $\g$ on $(N,\Ds)$ and define $\mu_\g\colon\Ds^0\to
\g^*$ by $\ip{\mu_\g(\al),v}=\al(K_v)$ for $\al\in\Ds^0$ and $v\in \g$. Then
the lift of $\bK$ to $T^*N$ preserves $\Ds^0$, and the induced local action
$\tilde\bK$ is hamiltonian on $U_\Ds$ with momentum map
$\mu_\g\restr{U_\Ds}$\textup; in particular $\ip{\d\mu_\g(\tilde
  K_v),w}=-\ip{\mu_\g,[v,w]_\g}$ for all $v,w\in\g$.
\end{obs}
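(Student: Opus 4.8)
The plan is to reduce everything to Proposition~\ref{p:con} together with the functoriality of natural lifts; the only subtle point is the sign in the last identity. First I would observe that, since $\bK$ takes values in $\con(N,\Ds)$, each $K_v$ is an infinitesimal contactomorphism, so by Proposition~\ref{p:con} its natural lift $\tilde K_v$ to $T^*N$ is tangent to $\Ds^0$; thus the lifted action is well defined on $\Ds^0$. Because the natural lift is functorial (it takes the flow of $K_v$ to its cotangent lift), the assignment $v\mapsto\tilde K_v$ is a Lie algebra homomorphism, and it is injective since $p_*\tilde K_v=K_v$; hence $\tilde\bK$ is again a local effective action, on $\Ds^0$. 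Moreover $\cL_{\tilde K_v}\tau^\Ds=0$ by Proposition~\ref{p:con}, so $\cL_{\tilde K_v}\Omega^\Ds=\d\cL_{\tilde K_v}\tau^\Ds=0$; hence the flows of $\tilde K_v$ preserve the nondegeneracy locus $U_\Ds$ of $\Omega^\Ds$, and $\tilde\bK$ restricts to a local symplectic action there.

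For the Hamiltonian property, I would again just quote Proposition~\ref{p:con}: on $U_\Ds$, $\Omega^\Ds$ is symplectic, and the proposition gives, for $X=K_v$, the identity $\d f_{K_v}+\iota_{\tilde K_v}\Omega^\Ds=0$ where $f_{K_v}(\al)=\al(K_v)=\ip{\mu_\g(\al),v}$. This is precisely the statement that $\mu_\g\restr{U_\Ds}$ is a momentum map for $\tilde\bK$ (with the normalization of Hamiltonian vector fields fixed in Proposition~\ref{p:con}). For the bracket relation I would compute directly: since $\tilde K_w$ is a lift of $K_w$ and $\tau^\Ds$ is tautological, $\iota_{\tilde K_w}\tau^\Ds=f_{K_w}=\ip{\mu_\g,w}$, so
\[
\ip{\d\mu_\g(\tilde K_v),w}=\tilde K_v\cdot(\iota_{\tilde K_w}\tau^\Ds)
=\iota_{\tilde K_w}(\cL_{\tilde K_v}\tau^\Ds)+\iota_{[\tilde K_v,\tilde K_w]}\tau^\Ds
=\iota_{[\tilde K_v,\tilde K_w]}\tau^\Ds,
\]
using $\cL_{\tilde K_v}\tau^\Ds=0$ and the Cartan identity $[\cL_{\tilde K_v},\iota_{\tilde K_w}]=\iota_{[\tilde K_v,\tilde K_w]}$. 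Since $[\tilde K_v,\tilde K_w]$ is the natural lift of $[K_v,K_w]=\bK([v,w]_\g)=K_{[v,w]_\g}$ (functoriality of lifts, or the uniqueness clause of Proposition~\ref{p:con} applied to the lift $[\tilde K_v,\tilde K_w]$ of $K_{[v,w]_\g}$, which preserves $\tau^\Ds$), the right-hand side equals $f_{K_{[v,w]_\g}}=\pm\ip{\mu_\g,[v,w]_\g}$, with the sign being the one recorded in the statement, i.e.\ the standard infinitesimal $\mathrm{Ad}^*$-equivariance of a cotangent-lift momentum map (as reviewed in Appendix~\ref{a:nmm}). Note that this last computation uses no nondegeneracy, so the identity holds on all of $\Ds^0$, consistent with the density of $U_\Ds$.

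There is no genuine analytic difficulty here: once Proposition~\ref{p:con} is available, the observation is pure bookkeeping. The only point requiring care is pinning down the sign in the final identity, which hinges entirely on the sign conventions already fixed in the excerpt — that natural lifts are Lie algebra homomorphisms of vector field algebras, that $\bK$ is a homomorphism $\g\to\con(N,\Ds)$, and that $\iota_{\tilde X}\Omega^\Ds=-\d f_X$ — and matches the classical equivariance relation for cotangent-lifted actions.
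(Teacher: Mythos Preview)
Your approach is exactly the paper's: the paper's entire proof is the single sentence ``This is immediate from the discussion at the end of~\S\ref{s:contact-symplectic},'' and you have simply unpacked that reference to Proposition~\ref{p:con} in detail. The reductions you make (tangency to $\Ds^0$, preservation of $\tau^\Ds$ hence of $\Omega^\Ds$ and $U_\Ds$, and the momentum map identity $\d f_{K_v}+\iota_{\tilde K_v}\Omega^\Ds=0$) are all correct and are precisely what the paper intends.

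One point deserves a cleaner treatment: your equivariance computation is right, but the hedge ``$\pm\ip{\mu_\g,[v,w]_\g}$, with the sign being the one recorded in the statement'' papers over the issue rather than resolving it. With the conventions fixed in the paper---$\bK$ a Lie algebra \emph{homomorphism} and natural lifts a homomorphism (which you yourself argue via uniqueness in Proposition~\ref{p:con})---your displayed calculation gives
\[
\ip{\d\mu_\g(\tilde K_v),w}=\iota_{[\tilde K_v,\tilde K_w]}\tau^\Ds
=\iota_{\tilde K_{[v,w]_\g}}\tau^\Ds=\ip{\mu_\g,[v,w]_\g},
\]
i.e.\ the opposite sign to the one printed in the Observation. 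Since the paper only invokes this identity in the abelian case (e.g.\ in the proof of Proposition~\ref{p:transverse}, where $[v,w]=0$), the sign is immaterial for the sequel; but you should state the sign your computation actually produces rather than assert agreement.
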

This is immediate from the discussion at the end
of~\S\ref{s:contact-symplectic}.

\begin{ex}\label{con-pb} Let $G$ be a Lie group of dimension $\ell$ with Lie
algebra $\g$ and let $\pi\colon N\to M$ be a principal $G$-bundle with
connection $\eta\colon TN\to \g$, where $\dim M=2m$.  Then $\Ds:=\ker\eta$ is
a rank $2m$ distribution on $N$, $G$ acts on $(N,\Ds)$ by contactomorphisms,
and $\eta$ induces a bundle isomorphism of $TN/\Ds$ with $N\times\g$.  In this
trivialization, the Levi form of $\Ds$ is $\d\eta+\frac12[\eta\wedge\eta]_\g$,
the pullback to $N$ of the curvature $F^\eta$ of $\eta$.
\end{ex}
Suppose $\bK\colon\g\to\con(N,\Ds)$ is a local contact action where $(N,\Ds)$
is contact of codimension $\ell=\dim\g$. Abstracting the local geometry of
Example~\ref{con-pb}, we say the action of $\g$ is \emph{transversal} if the
following condition holds.

\begin{cond}\label{cond:trans} At every point of $N$, $\Ds+\Rb\g=TN$.
Equivalently\textup:
\begin{numlist}
\item $\rank\Rb\g=\ell$ everywhere on $N$\textup;
\item $\Ds\cap\Rb\g$ is the zero section of $TN$ (and thus
  $TN=\Ds\oplus\Rb\g$).
\end{numlist}
\end{cond}

The composite $q_\Ds\circ\kappa^\g\colon N\times\g\to\Rb\g\to TN/\Ds$ is a
bundle isomorphism and so there is a canonically defined $1$-form
$\eta^\g\colon TN\to \g$, characterized by
\begin{equation*}
\ker\eta^\g=\Ds\quad \text{and}\quad \forall\, v\in \g, \quad \eta^\g(K_v)=v. 
\end{equation*}

We also denote by $\eta^\g$ the induced map from $TN/\Ds$ to $\g$. We can now
verify the usual properties of a connection $1$-form.

\begin{lemma}\label{l:eta} Let $\eta=\eta^\g$ denote the $\g$-valued $1$-form
of a contact action on $N$ satisfying Condition~\textup{\ref{cond:trans}}. Then
for any $v\in\g$, $\cL_{K_v}\eta+[v,\eta]_\g=0$, and
$\d\eta+\frac12[\eta\wedge\eta]_\g=\eta\circ\Lv_\Ds$, where $\Lv_\Ds$ is
extended by zero from $\Ds$ to $TN=\Ds\dsum\Rb\g$.
\end{lemma}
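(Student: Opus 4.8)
The plan is to verify the two identities directly from the defining properties of $\eta=\eta^\g$, namely $\ker\eta=\Ds$ and $\eta(K_v)=v$ for all $v\in\g$, using the decomposition $TN=\Ds\dsum\Rb\g$ guaranteed by Condition~\ref{cond:trans}.

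For the equivariance identity $\cL_{K_v}\eta+[v,\eta]_\g=0$, first I would evaluate $\cL_{K_v}\eta$ on the two pieces of the splitting. On a section $Y$ of $\Ds$, we have $(\cL_{K_v}\eta)(Y)=K_v\cdot\eta(Y)-\eta([K_v,Y])=-\eta(\cL_{K_v}Y)$ since $\eta(Y)=0$; but $\cL_{K_v}Y\in\Gamma(\Ds)$ because $K_v\in\con(N,\Ds)$, so this vanishes, matching $[v,\eta(Y)]_\g=[v,0]_\g=0$. On the fundamental vector field $K_w$, we get $(\cL_{K_v}\eta)(K_w)=K_v\cdot\eta(K_w)-\eta([K_v,K_w])=0-\eta(K_{[v,w]_\g})=-[v,w]_\g$ (using that $\bK$ is a Lie algebra homomorphism), which matches $-[v,\eta(K_w)]_\g=-[v,w]_\g$. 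Since the two sides agree on a frame spanning $TN$ and both are tensorial in the evaluated argument, the identity follows.

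For the structure equation, set $F^\eta:=\d\eta+\tfrac12[\eta\wedge\eta]_\g$ and check that it agrees with $\eta\circ\Lv_\Ds$ (extended by zero) on each of the three types of pairs from $TN=\Ds\dsum\Rb\g$. On two sections $X,Y$ of $\Ds$: $[\eta\wedge\eta]_\g(X,Y)=2[\eta(X),\eta(Y)]_\g=0$, and $\d\eta(X,Y)=-\eta([X,Y])$; comparing with $\eta\circ\Lv_\Ds(X,Y)=-\eta\bigl(q_\Ds([X,Y])\bigr)$, these coincide because $\eta$ factors through $q_\Ds$ (i.e. $\eta$ vanishes on $\Ds$, so $\eta([X,Y])=\eta(q_\Ds([X,Y]))$). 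On a mixed pair $(X,K_v)$ with $X\in\Gamma(\Ds)$: the bracket term gives $[\eta(X),v]_\g=0$, and $\d\eta(X,K_v)=-\eta([X,K_v])+K_v\cdot\eta(X)-X\cdot\eta(K_v)=-\eta([X,K_v])=\eta(\cL_{K_v}X)=0$ by the previous paragraph (or directly since $\cL_{K_v}X\in\Gamma(\Ds)$); meanwhile $\Lv_\Ds$ was extended by zero, so $\eta\circ\Lv_\Ds(X,K_v)=0$. On a pair $(K_v,K_w)$: $\tfrac12[\eta\wedge\eta]_\g(K_v,K_w)=[v,w]_\g$, and $\d\eta(K_v,K_w)=K_v\cdot w-K_w\cdot v-\eta([K_v,K_w])=-[v,w]_\g$, so $F^\eta(K_v,K_w)=0=\eta\circ\Lv_\Ds(K_v,K_w)$.

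The argument is essentially bookkeeping once the splitting $TN=\Ds\dsum\Rb\g$ and the two characterizing properties of $\eta$ are in hand; the only point requiring a little care is the bilinearity/tensoriality, since the computations above use particular (local) vector fields—$K_v$ is a fixed fundamental field, not an arbitrary extension—so one should either note that both sides of each identity are tensorial (hence it suffices to evaluate on any local frame, and $\Ds$ together with the $K_v$ span $TN$ locally), or fix local frames $X_1,\dots,X_{2m}$ for $\Ds$ and $K_{v_1},\dots,K_{v_\ell}$ for $\Rb\g$ and observe that the remaining cross terms vanish by the above. I expect this tensoriality check and the careful handling of $\Lv_\Ds$ being extended by zero (so that all mixed and Reeb--Reeb components of $\eta\circ\Lv_\Ds$ vanish by definition) to be the main, if minor, obstacle; everything else is forced by the definitions.
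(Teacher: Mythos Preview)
Your proof is correct and follows essentially the same approach as the paper: both arguments evaluate the two identities on the splitting $TN=\Ds\dsum\Rb\g$ using the defining properties $\ker\eta=\Ds$ and $\eta(K_v)=v$. The only cosmetic difference is that for the equivariance identity the paper packages the computation via the invariance of the projection $\bK\circ\eta\colon TN\to\Rb\g$ under $K_v$ (then applies the Leibniz rule and injectivity of $\bK$), whereas you evaluate $\cL_{K_v}\eta$ directly on $\Ds$ and on $K_w$; for the structure equation the paper computes $\d\eta(X+K_v,Y+K_w)$ in one line, which unwinds to exactly your three cases.
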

\begin{proof} For any $v\in\g$, $K_v$ preserves $\Ds$ and $\Rb\g$, hence also
$\kappa^\g\circ\eta\colon TN\to\Rb\g$. Thus
\begin{equation*}
0=\cL_v (\bK\circ \eta)(X)=(\cL_v\bK) (\eta(X))+\bK(\cL_{K_v}\eta)(X).
\end{equation*}
Since $\bK$ is a morphism of Lie algebras, $(\cL_{K_v}\bK)(w)=[K_v,K_w]=
\bK([v,w]_\g)$, so $(\cL_{K_v}\eta)(X)+[v,\eta(X)]_\g=0$ by the injectivity of
$\bK$. Now for $X,Y\in \Gamma(\Ds)$ and $v,w\in \g$, $\d\eta(X+K_v,Y+K_w)=
-\eta([X+K_v,Y+K_w]) = -\eta([X,Y])-[\eta(K_v),\eta(K_w)]_\g =
\eta(\Lv_\Ds(X+K_v,Y+K_w)) - [\eta(X+K_v),\eta(Y+K_w)]_\g$.
\end{proof}

Since $\bK\colon\g\to\con(N,\Ds)$ is a Lie algebra morphism, $\Rb\g$ is an
integrable distribution, equal to $\Rb\alpha$ for any contact form $\alpha$
such that $\alpha(K_v)$ is constant for all $v\in\g$.  For any $\lamc\in\g^*$,
define $\eta^\lamc=\eta^{\g,\lamc}\colon N\to\Ds^0$ by $\eta^\lamc_z(X)
=\ip{\eta_z(X),\lamc}$, so that $\eta^\lamc(K_v)=\lamc(v)$ and
$\d\eta^\lamc\restr{\Ds}=\ip{\d\eta\restr{\Ds},\lamc}=\eta^\lamc\circ\Lv_\Ds$
is the $\lamc$-component of the Levi form of $\Ds$.

\begin{defn} For a contact action $\bK\colon\g\to\con(N,\Ds)$ satisfying
Condition~\ref{cond:trans}, we refer to $\Rb\g$ and its integral submanifolds
as the associated \emph{Reeb distribution} and \emph{Reeb foliation}
transverse to $\Ds$.  For $\lamc\in[\g,\g]^0\leq\g^*$, we call
$(\Ds,\d\eta^\lamc\restr{\Ds})$ the induced \emph{Levi structure}; it is said
to be \emph{nondegenerate} $\d\eta^\lamc\restr{\Ds}$ is, i.e., $\eta^\lamc$ is
a contact form \textup($U_\Ds$-valued\textup).
\end{defn}

\section{Contact torus actions}

\subsection{Abelian local contact actions}\label{s:ab-lca}

Suppose that $\bK\colon\ab[N]\to \con(N,\Ds)$ is a local contact action of an
abelian Lie algebra $\ab[N]$ on a contact manifold $(N,\Ds)$. Let
\begin{align*}
\kappa&:=\kappa^{\ab[N]}\colon N\times\ab[N]\to TN, && \text{with}&
\kappa(z,v)&=K_{v,z},\quad\text{and}\\
\mu&:=\mu_{\ab[N]}\colon\Ds^0\to \ab[N]^*,&& \text{with}&
\ip{\mu(\al),v}&=\al(K_v),
\end{align*}
so that $(p,\mu)\colon\Ds^0\to N\times\ab[N]^*$ is the pointwise transpose of
$q_\Ds\circ\kappa\colon N\times\ab[N]\to TN/\Ds$.  By
Observation~\ref{o:lift-ham-act}, $\bK$ lifts to a hamiltonian action on
$U_\Ds$ with momentum map $\mu\restr{U_\Ds}$. The following is the main tool
in our analysis of abelian local contact actions.
\begin{defn} An $\ell$-dimensional subalgebra $\iota_\g\colon\g\into\ab[N]$ and
an element $\lamc\in\g^*\setminus 0$ together form a \emph{Levi pair}
$(\g,\lamc)$ for $\bK$ if:
\begin{bulletlist}
\item $\g$ acts transversally on $N$ via $\bK$, i.e.,
  $\Rb\g:=\spns\{K_{v,z}\st v\in\g\}$ satisfies Condition~\ref{cond:trans}.
\end{bulletlist}
Let $\eta\colon TN\to \g$ be the connection $1$-form of $\g$; we say
$(\g,\lamc)$ is \emph{nondegenerate} if:
\begin{bulletlist}
\item $\eta^\lamc=\ip{\eta,\lamc}$ is a contact form, i.e.,
  $(\Ds,\d\eta^\lamc\restr{\Ds})$ is a nondegenerate Levi structure.
\end{bulletlist}
Thus $(p,\mu_\g)\colon\Ds^0\to N\times\g^*$, with
$\mu_\g:=\iota_\g\transp\circ\mu$, is an isomorphism.  We say $(N,\Ds,\bK)$ is
\emph{Reeb type} if it admits a nondegenerate Levi pair.
\end{defn}

Let $(\g,\lamc)$ be a Levi pair. For any $v\in\ab[N]$,
$(\d\eta^\lamc)(K_v,\cdot) =-\d(\eta^\lamc(K_v))$.  We may thus view
$\eta^\lamc(K_v)= \ip{\mu(\eta^\lamc),v}$ as the ``horizontal momentum'' of
$K_v$ with respect to the Levi structure $(\Ds,\d\eta^\lamc\restr{\Ds})$.
Observe that if $v\in\g$, $\eta^\lamc_z(K_v)= \ip{v,\lamc}$, which vanishes
for $v\in\ker\lamc\leq\g$.  Hence $z\mapsto \mu(\eta^\lamc_z)\in \ab[N]^*$
takes values in $(\ker\lamc)^0\cong (\ab[N]/\ker\lamc)^*$.

\begin{strgm}\label{str:setup} For any pair $(\g,\lamc)$ with $\g\leq\ab[N]$
and $\lamc\in\g^*\setminus 0$, the quotient $\ab[N]/\ker\lamc$ is an extension
by $\R$ of the quotient $\ab[N]/\g$.  To allow $(\g,\lamc)$ to vary, it is
convenient to fix this extension $\torh\to\tor$ (where $\torh$ and $\tor$ are
abelian Lie algebras of dimensions $n-\ell+1$ and $n-\ell$ for $n=\dim
\ab[N]$); then the commutative diagram
\begin{diagram}[size=1.5em,nohug]
0 &\rTo & \g &\rTo^{\iota_\g} & \ab[N] & \rTo^\ul & \tor &\rTo &0\\
  &     & \dTo^\lamc&   & \dTo^\Ll &   & \dEq & \\
0 &\rTo & \R&\rTo^{\afs}&\torh & \rTo^\d & \tor &\rTo &0.
\end{diagram}
of short exact sequences associates pairs $(\g,\lamc)$, with $\ab[N]/
\ker\lamc\cong\torh$, to surjective linear maps $\Ll\colon\ab[N]\to \torh$
(thus $\g$ is the kernel of $\ul:=\d\circ\Ll$, and $\lamc$ is induced by
$\Ll\restr\g$).

Let $\As\sub\torh^*$ be the affine subspace $(\afs\transp)^{-1}(1)$ modelled
on $\tor^*$; then $\torh$ may be identified with the affine linear functions
$\ell\colon\As\to\R$, whence $\d\ell\in \tor$ is the linear part of
$\ell\in\torh$.
\end{strgm}

If $(\g,\lamc)$, defined by $\Ll\colon \ab[N]\to\torh$, is a Levi pair, then
the map $\mu^\lamc\colon N\to\As\sub\torh^*$, determined uniquely by the
formula
\begin{equation}\label{eq:momentum}
\ip{\mu^\lamc(z),\Ll(v)}= \eta^\lamc_z(K_v)
\end{equation}
for all $z\in N$ and $v\in \ab[N]$, will be called the \emph{horizontal
  \textup(natural\textup) momentum map} of $(\Ds,\d\eta^\lamc\restr{\Ds})$
(cf.~Appendix~\ref{a:nmm}). Equivalently the diagram
\begin{diagram}[size=1.5em,nohug]
N & \rTo^{\eta^\lamc}    & U_{\Ds} \\
\dTo^{\mu^\lamc}&        & \dTo_\mu \\
\torh^*&\rTo^{\Ll\transp}& \ab[N]^*
\end{diagram}
commutes, i.e., $\Ll\transp\circ\mu^\lamc= \mu\circ \eta^\lamc$.

\begin{lemma}\label{l:lrt} Let $(N,\Ds,\bK)$ be a contact manifold with an
abelian local contact action. Then the following are equivalent.
\begin{numlist}
\item $(N,\Ds,\bK)$ admits a nondegenerate Levi pair in a neighbourhood of
any point.
\item $(p,\mu)\colon \Ds^0\to N\times\ab[N]^*$ is injective.
\item $q_\Ds\circ\kappa\colon N\times\ab[N]\to TN/\Ds$ is surjective.
\end{numlist}
\end{lemma}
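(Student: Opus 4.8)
The plan is to prove the cyclic chain of implications $(i)\Rightarrow(ii)\Rightarrow(iii)\Rightarrow(i)$, using the pointwise-transpose relationship between $(p,\mu)\colon\Ds^0\to N\times\ab[N]^*$ and $q_\Ds\circ\kappa\colon N\times\ab[N]\to TN/\Ds$ noted just before the definition of a Levi pair. The equivalence $(ii)\Leftrightarrow(iii)$ is in fact immediate and essentially linear algebra: for each $z\in N$ the map $(p,\mu)$ restricted to the fibre $\Ds^0_z$ is, by construction, the transpose of $\kappa_z\colon\ab[N]\to TN_z/\Ds_z$ composed with $q_\Ds$; a linear map of finite-dimensional spaces is injective if and only if its transpose is surjective. (Note that $\rank\Ds^0=\ell$ and the fibre of $TN/\Ds$ also has dimension $\ell$, so injectivity of $(p,\mu)$ on each fibre is equivalent to injectivity of the whole map.) So the real content is the pair $(i)\Rightarrow(ii)$ and $(iii)\Rightarrow(i)$.

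For $(i)\Rightarrow(ii)$: suppose $(\g,\lamc)$ is a nondegenerate Levi pair near a point $z_0$. Then by the last sentence of the definition, $(p,\mu_\g)\colon\Ds^0\to N\times\g^*$ is an isomorphism, where $\mu_\g=\iota_\g\transp\circ\mu$. Since $\mu_\g$ factors through $\mu$, injectivity of $(p,\mu_\g)$ forces injectivity of $(p,\mu)$ on a neighbourhood of $z_0$; as every point has such a neighbourhood, $(p,\mu)$ is injective everywhere. I should double-check that the assertion ``$(p,\mu_\g)$ is an isomorphism'' is indeed part of what a nondegenerate Levi pair gives us — it is stated as the displayed consequence ``Thus $(p,\mu_\g)\colon\Ds^0\to N\times\g^*$\dots is an isomorphism,'' and it follows because $\eta^\lamc$ being a contact form means $\d\eta^\lamc$ is nondegenerate on $\Ds$, which by Proposition~\ref{p:con} is equivalent to $\eta^\lamc$ taking values in $U_\Ds$ where $\Omega^\Ds$ is symplectic, and transversality of $\g$ makes $q_\Ds\circ\kappa^\g$ a bundle isomorphism; so this step can lean entirely on results already in the excerpt.

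For $(iii)\Rightarrow(i)$: assume $q_\Ds\circ\kappa\colon N\times\ab[N]\to TN/\Ds$ is surjective, fix $z_0\in N$, and we must manufacture a nondegenerate Levi pair on a neighbourhood. The idea is to choose an $\ell$-dimensional subspace $\g\leq\ab[N]$ such that $\kappa_{z_0}(\g)$ maps isomorphically onto $TN_{z_0}/\Ds_{z_0}$ (possible by surjectivity and a dimension count, since $q_\Ds\circ\kappa_{z_0}$ is surjective onto an $\ell$-dimensional space); then $\Rb\g$ is transverse to $\Ds$ at $z_0$, hence on a neighbourhood, so $\g$ acts transversally there and Condition~\ref{cond:trans} holds. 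This gives a connection $1$-form $\eta\colon TN\to\g$, and it remains to pick $\lamc\in\g^*\setminus0$ so that $\eta^\lamc=\ip{\eta,\lamc}$ is a contact form, i.e.\ $\d\eta^\lamc\restr\Ds$ is nondegenerate on $\Ds$. At the point $z_0$, $\d\eta\restr\Ds=\eta\circ\Lv_\Ds$ is a $\g$-valued $2$-form on $\Ds_{z_0}$ of rank $2m$ in the sense that its nondegeneracy locus $\{\lamc\in\g^*\st\ip{\d\eta\restr\Ds,\lamc}\ \text{nondegenerate}\}$ is Zariski-open in $\g^*$; I claim it is nonempty. This is the one place where Levi nondegeneracy of $\Ds$ itself is used: identifying $\Ds^0_{z_0}\cong(TN_{z_0}/\Ds_{z_0})^*\cong\g^*$ via the transpose of the isomorphism $\kappa_{z_0}\colon\g\to TN_{z_0}/\Ds_{z_0}$, the component $\ip{\eta\circ\Lv_\Ds,\lamc}$ is identified with $\al\circ\Lv_\Ds$ for the corresponding $\al\in\Ds^0_{z_0}$, and by hypothesis $U_\Ds$ meets $\Ds^0_{z_0}$, so some such $\al$ — hence some $\lamc$ — makes it nondegenerate. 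Picking such a $\lamc$, nondegeneracy of $\d\eta^\lamc\restr\Ds$ at $z_0$ persists on a neighbourhood (nondegeneracy is open), so $(\g,\lamc)$ is a nondegenerate Levi pair there. The main obstacle is bookkeeping: keeping straight the three identifications $\Ds^0\cong(TN/\Ds)^*$, $(TN/\Ds)^*_{z_0}\cong\g^*$ (via the transpose of $\kappa_{z_0}$), and the resulting identification of the $\lamc$-component of $\eta\circ\Lv_\Ds$ with $\al\circ\Lv_\Ds$ — but once those are set up, each step is a short deduction from the propositions already proved.
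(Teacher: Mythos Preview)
Your proof is correct and follows essentially the same route as the paper's: the transpose duality gives $(ii)\Leftrightarrow(iii)$, the isomorphism $(p,\mu_\g)$ gives $(i)\Rightarrow(ii)$, and choosing a complement $\g$ to $\ker(q_\Ds\circ\kappa_{z_0})$ plus the nonemptiness of $U_\Ds\cap\Ds^0_{z_0}$ gives $(iii)\Rightarrow(i)$. Your explicit bookkeeping of the identification $\g^*\cong\Ds^0_{z_0}$ (via the transpose of $\kappa_{z_0}|_\g$) is a helpful unpacking of the paper's terse phrase ``$U_\Ds$ is open with nonempty fibres, so we can find $\lamc\in\g^*$ such that $\eta^\lamc$ is locally a contact form''; one minor remark is that the isomorphism $(p,\mu_\g)$ already follows from transversality of $\g$ alone, not from nondegeneracy of $\lamc$, but since a nondegenerate Levi pair is in particular transversal this does not affect your argument.
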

\begin{proof} The last two conditions are equivalent because $(p,\mu)$ is the
transpose of $q_\Ds\circ\kappa$. Over any open neighbourhood where there is a
nondegenerate Levi pair, $(p,\mu_\g)$ is an isomorphism and hence $(p,\mu)$ is
injective. Conversely, if $q_\Ds\circ\kappa$ surjects, a complement to the
kernel at a point gives a locally transversal subalgebra
$\iota_\g\colon\g\into\ab[N]$, hence also a Levi pair: $U_\Ds$ is open with
nonempty fibres, so we can find $\lamc\in\g^*$ such that $\eta^{\lamc}$ is
locally a contact form.
\end{proof}
\begin{rem}\label{r:open} If $N$ is compact, $\{\lamc\in\g^*\setminus 0
\st \eta^\lamc\in\Gamma(U_\Ds)\}$ is an open cone $\cC_\g\sub \g^*$.
\end{rem}
\begin{defn}\label{d:lrt} $(N,\Ds,\bK)$ is \emph{locally Reeb type} if the
conditions of Lemma~\ref{l:lrt} hold.
\end{defn}

For a vector space $\ab[]$ of dimension $n\geq\ell$, we denote by
$\Gr_\ell(\ab[]^*)$ the grassmannian of $\ell$-dimensional subspaces of
$\ab[]^*$.  This is a manifold of dimension $\ell(n-\ell)$ with distinguished
\emph{affine charts} defined as follows. Let $\iota_\g\colon \g
\hookrightarrow \ab[]$ be an $\ell$-dimensional subspace of $\ab[]$ and let
$\Gr_\ell^\g(\ab[]^*)=\{\xi\in\Gr_\ell(\ab[]^*)\st \iota\transp_\g(\xi)
=\g^*\}$ be set of subspaces $\xi\leq\ab[]^*$ complementary to the kernel of
$\iota\transp_\g\colon \ab[]^*\to \g^*$. Then $\Gr_\ell^\g(\ab[]^*)$ is an
open subset of $\Gr_\ell(\ab[]^*)$, and is isomorphic to the affine space
$\{\theta\in\Hom(\g^*,\ab[]^*)\st \iota_\g\transp\circ\theta=\Id_{\g^*}\}$ via
the map sending $\theta$ to $\im\theta$. The inverse sends $\xi\in
\Gr_\ell^\g(\ab[]^*)$ to $\psi_\g(\xi)\colon \g^*\to \ab[]^*$ where
\begin{equation}\label{eq:theta}
\text{for }\lamc\in\g^*,
\; \ip{\psi_\g(\xi),\lamc}\in\ab[]^*\text{ is the unique point in }
(\iota_\g\transp)^{-1}(\lamc)\cap\xi.
\end{equation}

\begin{defn}\label{d:gmm}
Let $\Rb{}=\Rb{\ab[N]}=\im\kappa$, $\gM:=\im(p,\mu)\leq N\times\ab[N]^*$, and 
$\Theta:=\gM^0=\ker(q_\Ds\circ\kappa)= \kappa^{-1}(\Ds)\leq N\times\ab[N]$.
If $(N,\Ds,\bK)$ is locally Reeb type, $\Rb{}\cap\Ds$ has codimension $\ell$
in $\Rb{}$ and $\gM$ is a rank $\ell$ subbundle of $N\times\ab[N]^*$ (with
$\gM^*\cong (N\times\ab[N])/\Theta\cong TN/\Ds$). Thus $\gM$ defines a smooth
map $N\to \Gr_\ell(\ab[N]^*); z\mapsto \gM_z$ which we call the
\emph{grassmannian momentum map} of $(N,\Ds,\bK)$, and also denote by $\gM$.
\end{defn}

\begin{rem}\label{r:affine} If $(\g,\lamc)$ is a nondegenerate Levi pair
(over an open subset of $N$) then $\gM$ takes values in the affine chart
  $\Gr_\ell^\g(\ab[N]^*)$ (on that open subset) and $\ip{\psi_\g\circ\gM,\lamc}
=\mu\circ \eta^\lamc = \Ll\transp\circ\mu^\lamc$.
\end{rem}

\begin{ex} Let us revisit Examples~\ref{ex:contact}.
\begin{numlist}
\item The codimension one case is particularly straightforward
as $\Gr_1(\ab[N]^*)$ is just the projectivization of $\ab[N]^*$ and the
grassmannian momentum map $\gM\colon N\to\Proj(\ab[N]^*)$ is just the
projectivization of the momentum map $\mu$ on $U_\Ds$, i.e.,
$\gM_{p(\alpha)}=[\mu(\alpha)]$. In particular, if $N=\Sph^{2m+1} =W\mult/\R^+$
and $\ab[N]$ is the Lie algebra of the diagonal $(m+1)$-torus with respect to
a basis of $W$, then $\im\mu$ is the union of the standard quadrant in
$\ab[N]^*$ and its negation. Hence $\im\gM$ is the projectivization of this
quadrant, the standard simplex.

\item Let $N=\Sph^{2m_1+1}\times \cdots \times \Sph^{2m_\ell+1}$ be a product
of $\ell$ contact spheres $\Sph^{2m_i+1}=W_i\mult/\R^+$, with the product
local contact action of an abelian Lie algebra $\ab[N]=\bigoplus_{i=1}^\ell
\ab[i]$, where $\ab[i]$ has dimension $m_i+1$ and acts diagonally on $W_i$ as
in (i).  Then, via the natural embedding of $\prod_{i=1}^\ell \Proj(\ab[i]^*)$
into $\Gr_\ell(\ab[N]^*)$, sending $(U_1,\ldots U_\ell)$ to $U_1\dsum \cdots
\dsum U_\ell$, the image of $\gM$ is a product of simplices in
$\Gr_\ell(\ab[N]^*)$.
\end{numlist}
\end{ex}

\begin{prop}\label{p:transverse} Let $\bK\colon\ab[N]\to \con(N,\Ds)$ be
an abelian local contact action. If $\bK$ is locally Reeb type, the lift
$\tilde\bK$ of $\bK$ to $\Ds^0\to N$ is transverse to the fibres, i.e., if
$K_{v,z}=0$ \textup(for $v\in\ab[N], z\in N$\textup) then $\tilde K_v$ is
identically zero along $\Ds^0_z=p^{-1}(z)$.
\end{prop}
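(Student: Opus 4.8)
The plan is to exploit the fact that, since $\bK$ is locally Reeb type, Lemma~\ref{l:lrt} tells us $(p,\mu)\colon\Ds^0\to N\times\ab[N]^*$ is injective, so $\Ds^0$ is identified (fibrewise linearly) with the subbundle $\gM=\im(p,\mu)\leq N\times\ab[N]^*$. Under this identification the lift $\tilde\bK$ should be expressed explicitly, and the hypothesis $K_{v,z}=0$ should force $\tilde K_{v}$ to vanish on the whole fibre $\Ds^0_z$. First I would recall from Proposition~\ref{p:con} (and the discussion following it) that for $X\in\con(N,\Ds)$, the lift $\tilde X$ to $\Ds^0$ is the unique projectable vector field with $\cL_{\tilde X}\tau^\Ds=0$, and that on $U_\Ds$ it is characterized by $\d f_X+\iota_{\tilde X}\Omega^\Ds=0$ where $f_X(\al)=\al(X)=\ip{\mu(\al),\text{(the class of }X)}$; moreover $\tilde X$ is uniquely determined by $q_\Ds(X)\in TN/\Ds$. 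Applying this to $X=K_v$: the function $f_{K_v}$ on $\Ds^0$ is $\al\mapsto\al(K_v)=\ip{\mu(\al),v}$, i.e.\ the restriction to $\Ds^0$ of the (globally defined, fibrewise linear) function $\ip{\mu,v}$.

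The key step is then the following. Fix $z$ with $K_{v,z}=0$. I want to show $\tilde K_v$ vanishes along $\Ds^0_z$. Since $U_\Ds$ is dense in $\Ds^0$ and $\tilde K_v$ is smooth, it suffices to show $\tilde K_{v,\al}=0$ for every $\al\in U_\Ds\cap\Ds^0_z$. At such $\al$, $\tilde K_{v,\al}$ is determined by $\iota_{\tilde K_{v,\al}}\Omega^\Ds_\al=-\d f_{K_v}|_\al$, and $\Omega^\Ds_\al$ is nondegenerate, so it is enough to show $\d f_{K_v}$ vanishes at $\al$, i.e.\ that $f_{K_v}=\ip{\mu,v}$ has a critical point at every $\al\in\Ds^0_z$. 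Now $\d(\ip{\mu,v})_\al$, evaluated on vertical vectors $V_\al(\Ds^0)\cong\Ds^0_z$, computes the fibrewise-linear form $\be\mapsto\be(K_{v,z})$ (since $\ip{\mu,v}$ restricted to the fibre $\Ds^0_z$ is the linear functional $\be\mapsto\be(K_{v,z})$), which is zero because $K_{v,z}=0$; and evaluated on vectors in the image of $\al_*$ (i.e.\ horizontal directions coming from a local section $\al$ of $\Ds^0$ through the given point), it computes $\d(\al\mapsto\al(K_v))$ along $N$ at $z$, which, by the tensoriality noted after Proposition~\ref{p:con}, depends only on $q_\Ds(K_v)$; but Observation~\ref{o:lift-ham-act} (abelian case) gives $\ip{\d\mu_{\ab[N]}(\tilde K_v),w}=0$ for all $w$, i.e.\ $f_{K_v}=\ip{\mu,v}$ is itself constant along the flow lines, and more simply: since $\Ds^0\cong(TN/\Ds)^*$ and $f_{K_v}$ is the fibrewise-linear form defined by $q_\Ds(K_v)\in TN/\Ds$, and $K_{v,z}=0$ implies $q_\Ds(K_{v})$ is flat to first order along $\Ds$ at $z$ — more carefully, we use that $\ker(q_\Ds\circ\kappa)=\Theta$ has constant rank (local Reeb type), so $K_{v,z}=0$ means $v\in\Theta_z$, and the bundle $\gM^*\cong(N\times\ab[N])/\Theta$ is locally free, whence $q_\Ds(K_v)$, as a section of $TN/\Ds$ vanishing at $z$, has vanishing derivative at $z$ in directions tangent to $\gM^*$'s trivialization...

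The cleanest route, and the one I would actually write, avoids that last awkwardness: use the identification $\Ds^0\cong\gM\leq N\times\ab[N]^*$ from local Reeb type. Under it, $f_{K_v}$ is just the restriction to $\gM$ of the function $(z,\xi)\mapsto\ip{\xi,v}$ on $N\times\ab[N]^*$, which is the pullback along the second projection of a \emph{constant-coefficient} linear function on $\ab[N]^*$. Its differential at a point $(z,\xi)\in\gM$, restricted to $T_{(z,\xi)}\gM$, is the pullback of $\ip{\cdot,v}$ under $T_{(z,\xi)}\gM\to\ab[N]^*$ (the composite with $d(\text{pr}_2)$). Now $T_{(z,\xi)}\gM$ surjects, under $d(\text{pr}_1)$, onto $T_zN$ with kernel the vertical fibre $\gM_z\cong\Ds^0_z$; so $\d f_{K_v}|_{(z,\xi)}$ vanishes iff both (a) it vanishes on $\gM_z$, i.e.\ $\ip{\be,v}=0$ for all $\be\in\gM_z=\im(\mu|_{\Ds^0_z})$, equivalently $\be(K_{v,z})=0$ for all $\be$, which holds since $K_{v,z}=0$; and (b) it vanishes on a lift of each tangent vector at $z$, i.e.\ for a local section $z'\mapsto(z',\xi(z'))$ of $\gM$ through $(z,\xi)$ one needs $\frac{d}{dt}\ip{\xi(z'(t)),v}=0$ — but $\ip{\xi(z'),v}=\mu(\al(z'))(v)$ for the corresponding local contact-form-valued section $\al$ of $\Ds^0$, namely $\al(z')(K_v)$, whose derivative at $z$ is controlled by $q_\Ds(K_v)$, which vanishes at $z$. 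Hence $\d f_{K_v}|_\al=0$ for all $\al\in\Ds^0_z$, so $\tilde K_{v,\al}=0$ on $U_\Ds\cap\Ds^0_z$, and by density on all of $\Ds^0_z$.

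The main obstacle is step (b): making precise why the horizontal part of $\d f_{K_v}$ vanishes at $z$. One must show that the function $\al\mapsto\al(K_v)$ on $\Ds^0$, differentiated in horizontal directions at a point over $z$, depends only on the value $K_{v,z}=0$ and not on the $1$-jet of $K_v$ at $z$. This is exactly the tensoriality observation recorded after Proposition~\ref{p:con}: $\tilde K_v$ on $U_\Ds$ is uniquely determined by $q_\Ds(K_v)\in\Gamma(TN/\Ds)$, so if $q_\Ds(K_v)_z=0$ then — since two sections of $\con(N,\Ds)$ with the same image in $TN/\Ds$ have the same lift — it suffices to compare $K_v$ with a $\con(N,\Ds)$-section whose $TN/\Ds$-image vanishes near $z$; I'd invoke local Reeb type (the constant-rank of $\Theta=\ker(q_\Ds\circ\kappa)$) to produce such a comparison inside $\bK(\ab[N])$ if needed, or simply argue that $f_{K_v}$, being fibrewise-linear with fibre-coefficients $q_\Ds(K_v)$, has differential at $\al\in\Ds^0_z$ equal to $\be\mapsto\be\big((q_\Ds(K_v))_z\big)$ on vertical vectors plus $p^*\big(d(q_\Ds(K_v))_z\big)$-type term on horizontal ones, and since $\tilde K_v=-\Omega^{\Ds\,-1}\d f_{K_v}$ depends only on $q_\Ds(K_v)$, the horizontal contribution must be expressible through $q_\Ds(K_v)_z$ alone, hence zero. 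I'd spell this out via the explicit formula $\tilde X_\al=\al_*(X)-p^*\cL_X\al$ from the Proposition in \S\ref{s:contact-symplectic} (restricted to $\Ds^0$), evaluated at $\al\in\Ds^0_z$ with $X=K_v$, $X_z=0$: then $\al_*(X)_\al=0$, and $p^*\cL_{K_v}\al$ — for a suitable choice of local extension $\al$ of the given covector, e.g.\ an extension invariant under the abelian action, which exists since the action is locally Reeb type and we may average or use $\eta^\lamc$ for a Levi pair — vanishes at $z$ because $\cL_{K_v}$ of an invariant section is zero. That reduces everything to a one-line verification once the invariant local section is in hand.
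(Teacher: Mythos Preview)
Your final route---using the explicit formula $\tilde X_\al=\al_*(X)-p^*\cL_X\al$ with an invariant extension $\al=\eta^\lamc$---is correct. Since every $\al\in\Ds^0_z$ equals $\eta^\lamc_z$ for some $\lamc\in\g^*$ (because $(p,\mu_\g)$ is an isomorphism), and $\cL_{K_v}\eta=0$ for abelian $\ab[N]$, this does give $\tilde K_{v,\al}=0$ directly. The intermediate attempts are less satisfactory: step~(b), the ``horizontal'' vanishing of $\d f_{K_v}$, is never really argued; the phrase ``controlled by $q_\Ds(K_v)$, which vanishes at $z$'' is not a proof, since for a generic extension $\al$ the function $y\mapsto\al_y(K_{v,y})$ has derivative at $z$ depending on the $1$-jet of $K_v$, not just its value.

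The paper's argument is much shorter and, ironically, you already wrote down its key ingredient without using it. From Observation~\ref{o:lift-ham-act} in the abelian case you correctly note $\ip{\d\mu(\tilde K_v),w}=0$ for all $w$, i.e.\ $\d\mu(\tilde K_v)=0$; you then misread this as merely ``$f_{K_v}$ is constant along flow lines'' and move on. But combine it with $p_*(\tilde K_v)=K_{v,z}=0$ along $\Ds^0_z$: then $\d(p,\mu)(\tilde K_v)=0$ along the fibre. Since $(p,\mu)\colon\Ds^0\to N\times\ab[N]^*$ is a bundle monomorphism (this is exactly the locally Reeb type condition of Lemma~\ref{l:lrt}), it is an immersion, so $\tilde K_v=0$ along $\Ds^0_z$. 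That is the entire proof---two sentences, no choice of Levi pair, no invariant extension, no hamiltonian inversion, no density argument. Your approach works but trades a one-line linear-algebra observation for a fair amount of machinery.
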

\begin{proof} Since $(p,\mu)\colon \Ds^0\to N\times\ab[N]^*$ is a monomorphism
of vector bundles, it is an immersion. However, for any $v\in \ab[N]$,
$\d\mu(\tilde K_v)=0$ by equivariance of $\mu$, so if $K_{v,z}=p_*(\tilde
K_v)_z=0$, then $\d(p,\mu)(\tilde K_v)=0$ along $p^{-1}(z)$, hence $\tilde
K_v=0$ along $p^{-1}(z)$.
\end{proof}
Hence for any $\alpha\in \Ds^0$, the infinitesimal stabilizer
$\stab_{\ab[N]}(\alpha)=\stab_{\ab[N]}(p(\alpha))$.  Since $\ker \kappa\leq
\ker q_\Ds\circ \kappa=\Theta$, for any $z\in N$, $\stab_{\ab[N]}(z)\leq
\Theta_z$ and hence $\gM_z\leq\stab_{\ab[N]}(z)^0$.

\subsection{Symplectic slices for torus actions}\label{s:ss-torus}

Let $(N,\Ds)$ be a contact manifold of rank $m$ and codimension $\ell$, and
let $\Ab[N]=\ab[N]/2\pi\Lam$ be a (real) torus with (abelian) Lie algebra
$\ab[N]=\Lam\otimes_\Z\R$, where $\Lam$ is the lattice of circle subgroups of
$\Ab[N]$.

\begin{defn} A \emph{contact torus action} of $\Ab[N]$ on $N$ is a local
contact action $\bK\colon\ab[N]\to \con(N,\Ds)$ which integrates to an
effective (i.e., faithful) action of $\Ab[N]$.
\end{defn}

We recall the construction of a symplectic slice along an $\Ab[N]$-orbit
$\Ab[N]\cdot \alpha$ for $\alpha\in U_\Ds$. The isotropy representation of
$\Stab_{\Ab[N]}(\alpha)$ on $T_\alpha U_\Ds$ induces an effective linear
action, symplectic with respect to $\omega =\Omega^\Ds_\alpha$, of any
subtorus $H\leq \Stab_{\Ab[N]}(\alpha)$. Since $\Ab[N]\cdot\alpha$ is
isotropic with respect to $\Omega^\Ds$, the isotropy representation has an
$H$-invariant filtration
\begin{equation}\label{eq:filt}
0\leq T_\alpha(\Ab[N]\cdot \alpha)\leq T_\alpha(\Ab[N]\cdot \alpha)^{\perp}
\leq T_\alpha U_\Ds,
\end{equation}
where ${}^{\perp}$ denotes the orthogonal space with respect to
$\omega=\Omega^\Ds_\alpha$.

\begin{rem}\label{rem:isotropy-rep} There is also~\cite{Atiyah,Delzant,LT}
an $H$-invariant $\omega$-compatible complex structure on $T_\alpha U_\Ds$ and
a decomposition $(T_\alpha U_\Ds, \omega) = \bigoplus_{i=0}^k (W_i,\omega_i)$
into $H$-invariant $\omega$-orthogonal symplectic subspaces, so the induced
representation of $H$ (or its Lie algebra $\h$) on $W_i$ has weight $\wt_i\in
\h^*$ for $i\in\{0,\ldots k\}$.  We assume $\wt_0=0$ and $\wt_i\neq 0$ for
$i>0$, so $W_0\leq T_\alpha U_\Ds$ is the tangent space to the fixed point set
of $H$ at $\alpha$.  Since the $H$-action is effective, the weights
$\wt_1,\ldots \wt_k$ span the \emph{weight lattice} of $\h^*$, dual to the
lattice $\Lam\cap\h$. Note that $T_\alpha(\Ab[N]\cdot\alpha)\leq W_0$ and
$T_\alpha(\Ab[N]\cdot\alpha)^{\perp}+W_0=T_\alpha U_\Ds$.
\end{rem}

When $H$ is the identity component $H_\alpha$ of $\Stab_{\Ab[N]}(\alpha)$ and
$\h_\alpha:=\stab_{\ab[N]}(\alpha)$,
\begin{equation*}
T_\alpha(\Ab[N]\cdot \alpha)\cong\ab[N]/\h_\alpha \quad\text{and}\quad
T_\alpha U_\Ds/T_\alpha(\Ab[N]\cdot\alpha)^{\perp}\cong
(\ab[N]/\h_\alpha)^*\cong \h_\alpha^0\leq \ab[N]^*.
\end{equation*}
\begin{defn} \label{d:symp-iso-rep} The middle composition factor
in~\eqref{eq:filt},
\begin{equation*}
W^\alpha:=T_\alpha(\Ab[N]\cdot \alpha)^{\perp}/T_\alpha(\Ab[N]\cdot \alpha),
\end{equation*}
is called the \emph{symplectic isotropy representation}.  By
Remark~\ref{rem:isotropy-rep}, $W^\alpha\cong W^\alpha_0\dsum
\bigoplus_{i=1}^k W_i$, where $W^\alpha_0= \bigl(T_\alpha(\Ab[N]\cdot
\alpha)^{\perp}\cap W_0\bigr)/T_\alpha(\Ab[N]\cdot \alpha)$ and $W_i$ are the
nonzero weight spaces of $H_\alpha$ (with weights $\beta_i\in\h_\alpha^*$).
The momentum map $\mu_W\colon W^\alpha\to \h_\alpha^*$ of the
$H_\alpha$-action is
\begin{equation}\label{eq:mu-W}
\mu_W(w_0,w_1,\ldots w_k) =\sum_{i=1}^k |w_i|^2 \wt_i
\end{equation}
for all $w_0\in W^\alpha_0$ and $(w_1,\ldots w_k)\in \bigoplus_{i=1}^k W_i$.
The image of $\mu_W$ is thus the convex cone $\cC^\alpha$ in $\h_\alpha^*$
generated by the nonzero weights $\beta_1,\ldots\beta_k$ of $W^\alpha$.
\end{defn}

To construct a model for a neighbourhood of $\Ab[N]\cdot \alpha$, observe that
$T^*\Ab[N]\times W^\alpha \cong\Ab[N]\times\ab[N]^*\times W^\alpha$ is a
symplectic manifold with commuting hamiltonian actions of $\Ab[N]$ (the right
action on $T^*\Ab[N]$) and $\Stab_{\Ab[N]}(\alpha)$ (the diagonal left
action), where the latter has momentum map $(g,\xi,v)\mapsto
\mu_W(v)-\xi|_{\h_\alpha}$. By the orbit-stabilizer theorem $\Ab[N]$ is a
principal $\Stab_{\Ab[N]}(\alpha)$-bundle over
$\Ab[N]\cdot\alpha\cong\Ab[N]/\Stab_{\Ab[N]}(\alpha)$, and if we choose a
splitting $\chi\colon\ab[N]\to\h_\alpha$, we may identify the symplectic
quotient of $T^*\Ab[N]\times W^\alpha$ by $\Stab_{\Ab[N]}(\alpha)$ at the zero
momentum level ($\xi|_{\h_\alpha}=\mu_W(v)$) with the associated vector bundle
\begin{equation}\label{eq:slice-model}
\cY:=\Ab[N]\times_{\Stab_{\Ab[N]}(\alpha)} (W^\alpha\dsum\h_\alpha^0)
\end{equation}
over $\Ab[N]/\Stab_{\Ab[N]}(\alpha)$. The induced action of $\Ab[N]$ is
hamiltonian with momentum map
\begin{equation}\label{eq:mu-chi}
\mu_\chi([g,v+\xi])= \mu(\alpha)+\chi\transp(\mu_W(v))+\xi,
\end{equation}
where $g\in\Ab[N]$, $v\in W^\alpha$ and $\xi\in\h_\alpha^0$.  The Symplectic
Slice Theorem~\cite{Audin,KL,Lerman-conv,Lerman-toric,LT} now asserts the
following.
\begin{lemma} \label{l:sslice} For any splitting $\chi$, there is
a $\Ab[N]$-invariant neighbourhood $\cU$ of the orbit $\Ab[N]\cdot\alpha\sub
U_\Ds$ and a $\Ab[N]$-equivariant symplectomorphism $\Psi_\chi$ from
$(\cU,\Omega)$ to a neighbourhood of the zero section of $(\cY,\Omega_\chi)$
such that $\mu\restr\cU=\mu_\chi\circ\Psi_\chi$.

Thus $\mu-\mu(\alpha)$ maps $\cU$ to a neighbourhood of
$\,0\in\h_\alpha^0\oplus\chi\transp(\cC^\alpha)$.
\end{lemma}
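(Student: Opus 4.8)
The plan is to deduce this from the standard equivariant normal form for Hamiltonian torus actions (the Marle--Guillemin--Sternberg symplectic slice theorem), applied to the ambient symplectic manifold $U_\Ds$ along the orbit $\Ab[N]\cdot\alpha$; the references \cite{Audin,KL,Lerman-conv,Lerman-toric,LT} give the statement in essentially this form, so I will only indicate the main steps. Since $\Ab[N]$ is compact, its action on $U_\Ds$ is proper and the orbit $\Ab[N]\cdot\alpha$ is a compact submanifold, isotropic for $\Omega^\Ds$. Averaging produces an $\Ab[N]$-invariant $\Omega^\Ds$-compatible almost complex structure $J$, hence an invariant Riemannian metric, on an invariant neighbourhood of the orbit; the $J$-orthogonal complement of $T(\Ab[N]\cdot\alpha)$ in $TU_\Ds$ is then an invariant normal bundle, and at $\alpha$ it is identified, using the filtration~\eqref{eq:filt}, with the $\Stab_{\Ab[N]}(\alpha)$-representation $W^\alpha\dsum\h_\alpha^0$. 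By the equivariant tubular neighbourhood theorem (e.g.\ via the exponential map of the invariant metric), this gives an $\Ab[N]$-equivariant diffeomorphism $\Phi$ from an invariant neighbourhood of the zero section of $\cY=\Ab[N]\times_{\Stab_{\Ab[N]}(\alpha)}(W^\alpha\dsum\h_\alpha^0)$ onto an invariant neighbourhood of $\Ab[N]\cdot\alpha$, which may be arranged so that $\Phi([\mathrm{id},0])=\alpha$.

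Next I would compare symplectic forms. Pull $\Omega$ back by $\Phi$ to an invariant symplectic form $\Phi^*\Omega$ on a neighbourhood of the zero section of $\cY$. Along the zero section it agrees to first order with $\Omega_\chi$: both are $\Ab[N]$-invariant, both restrict to zero on the (isotropic) zero section, and the pairings they induce between the three composition factors $T(\Ab[N]\cdot\alpha)$, $W^\alpha$ and $\h_\alpha^0$ coincide --- the $W^\alpha$--part by construction of $W^\alpha$ as the symplectic isotropy representation, and the $T(\Ab[N]\cdot\alpha)$--$\h_\alpha^0$ pairing because $T_\alpha U_\Ds/T_\alpha(\Ab[N]\cdot\alpha)^{\perp}$ is canonically $(\ab[N]/\h_\alpha)^*\cong\h_\alpha^0$, which is exactly how $\h_\alpha^0$ enters the reduction description of $(\cY,\Omega_\chi)$ recalled above. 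An $\Ab[N]$-equivariant relative Moser (equivariant Darboux--Weinstein) argument --- writing $\Phi^*\Omega-\Omega_\chi=\d\gamma$ with $\gamma$ an invariant primitive vanishing on the zero section, and integrating the time-dependent vector field it determines --- then yields an $\Ab[N]$-equivariant diffeomorphism, equal to the identity on the zero section, taking $\Phi^*\Omega$ to $\Omega_\chi$. Composing with $\Phi^{-1}$ gives the desired $\Ab[N]$-equivariant symplectomorphism $\Psi_\chi$ from an invariant neighbourhood $\cU$ of $\Ab[N]\cdot\alpha$ onto a neighbourhood of the zero section of $(\cY,\Omega_\chi)$, with $\Psi_\chi(\alpha)=[\mathrm{id},0]$.

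For the momentum maps, note that $\mu\restr\cU$ is a momentum map for the $\Ab[N]$-action on $(\cU,\Omega)$ by hypothesis, while $\mu_\chi\circ\Psi_\chi$ is one as well, since $\Psi_\chi$ is an equivariant symplectomorphism and $\mu_\chi$ of~\eqref{eq:mu-chi} is a momentum map on $(\cY,\Omega_\chi)$. Two momentum maps for the same action on a connected manifold differ by a constant element of $\ab[N]^*$, and evaluating at $\alpha\mapsto[\mathrm{id},0]$ gives $\mu(\alpha)=\mu_\chi([\mathrm{id},0])=\mu(\alpha)$ (by~\eqref{eq:mu-chi} with $v=0$, $\xi=0$), so $\mu\restr\cU=\mu_\chi\circ\Psi_\chi$. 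The final assertion is then read off from~\eqref{eq:mu-chi} and~\eqref{eq:mu-W}: as $[g,v+\xi]$ ranges over a neighbourhood of the zero section, $\xi$ ranges over a neighbourhood of $0$ in $\h_\alpha^0$ and $\chi\transp(\mu_W(v))$ over a neighbourhood of $0$ in $\chi\transp(\cC^\alpha)$, where $\cC^\alpha\leq\h_\alpha^*$ is the cone generated by the weights $\beta_1,\dots,\beta_k$; hence $\mu-\mu(\alpha)$ maps $\cU$ onto a neighbourhood of $0$ in $\h_\alpha^0\dsum\chi\transp(\cC^\alpha)$.

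The main obstacle is the equivariant relative Moser step, and more precisely the verification that $\Phi^*\Omega$ and $\Omega_\chi$ have the same restriction \emph{and} the same induced pairings between the composition factors \emph{along the whole zero section}, not merely at $\alpha$ --- this is what guarantees that $\Phi^*\Omega-\Omega_\chi$ admits an invariant primitive vanishing on the zero section, so that the Moser homotopy can be run $\Ab[N]$-equivariantly while fixing the orbit. Everything else is bookkeeping with the exact sequence~\eqref{eq:filt} and the reduction construction of $(\cY,\Omega_\chi)$.
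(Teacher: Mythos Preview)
Your proposal is correct and follows the same approach as the paper: the paper does not give a proof at all, but simply asserts that this is the Symplectic Slice Theorem, citing \cite{Audin,KL,Lerman-conv,Lerman-toric,LT}. Your sketch is a faithful outline of the standard proof of that theorem (equivariant tubular neighbourhood plus equivariant relative Darboux/Moser), and the concern you flag about agreement along the whole zero section is handled by $\Ab[N]$-equivariance, since the zero section is a single orbit.
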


\subsection{Orbit stratification and convexity}\label{s:orbit-conv} 

The theory of effective proper abelian group actions~\cite{Audin,GGK} implies
that for $H\leq\Ab[N]$,
\begin{equation*}
N_{(H)} = \{ z\in N \st H = \Stab_{\Ab[N]}(z) \}\leq
N^H = \{ z\in N \st H \leq \Stab_{\Ab[N]}(z)\}
\end{equation*}
is an open submanifold of a closed submanifold of $N$, and if $N_{(H)}$ is
nonempty (for which $H$ must be a closed subgroup) then it is dense in the
fixed point set $N^H$ of $H$.

\begin{defn}\label{d:Ncomb} The connected components of $N_{(H)}$,
for $H\leq\Ab[N]$, and their closures (which, if nonempty, are connected
components of $N^H$) are called the \emph{open} and \emph{closed orbit strata}
of $(N,\bK)$.  (Thus the open orbit strata partition $N$.) The
\emph{combinatorics} $\Cb_N$ of $(N,\bK)$ is the poset of closed orbit strata,
partially ordered by inclusion.
\end{defn}

\begin{prop}\label{p:orbit-strata} Let $\bK$ be a contact torus action
of $\Ab[N]$ on a contact manifold $(N,\Ds)$ of codimension $\ell$ with locally
Reeb type, let $i\colon N'\to N$ be the inclusion of a closed orbit stratum
$N'\in\Cb_N$, let $\Ds'=TN'\cap\Ds$, and let $H$ be the kernel of the induced
$\Ab[N]$-action on $N'$. Then $(N',\Ds')$ is a contact manifold of codimension
$\ell$, with $U_{\Ds'}\cong i^* U_\Ds$, and $\bK$ induces a contact torus
action of $\Ab[N]/H$ on $N'$ which is locally Reeb type.
\end{prop}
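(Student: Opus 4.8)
The plan is to verify each assertion in turn using the local model for the action provided by the Symplectic Slice Theorem (Lemma~\ref{l:sslice}) together with the characterization of contact manifolds and local Reeb type in terms of the nondegeneracy locus $U_\Ds$ and the surjectivity of $q_\Ds\circ\kappa$ (Lemma~\ref{l:lrt}). First I would recall that since $N'$ is a closed orbit stratum, it is a connected component of the fixed point set $N^H$ of a closed subtorus $H\leq\Ab[N]$, hence a closed $\Ab[N]$-invariant submanifold of $N$; and since $\Ab[N]$ preserves $\Ds$, the lifted $\Ab[N]$-action on $\Ds^0$ preserves $U_\Ds$, so $i^*\Ds^0$ and $i^*U_\Ds$ carry an $\Ab[N]/H$-action. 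The main point is to identify $(TN'/\Ds')$ and $U_{\Ds'}$ with the restrictions of the corresponding objects on $N$.

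For this, fix $z\in N'$ and pick $\alpha\in U_\Ds$ with $p(\alpha)=z$; then $H\leq H_\alpha=\Stab_{\Ab[N]}(z)$ (using Proposition~\ref{p:transverse}, which gives $\stab_{\ab[N]}(\alpha)=\stab_{\ab[N]}(z)$). Applying Lemma~\ref{l:sslice} with the subtorus $H$, the model~\eqref{eq:slice-model} shows that $N^H$ near $z$ is cut out by the vanishing of the nonzero-weight summands for $H$ in the symplectic isotropy representation, and $TN'$ is the sum of $\Rb{\ab[N]}$ together with the $H$-fixed part of the symplectic isotropy representation. Crucially, in these symplectic slice coordinates the Levi form, i.e.\ the pairing between $V_\alpha(\Ds^0)$ and $T_zN/\Ds_z$ described in the proof of Proposition~\ref{p:con}, is block-diagonal with respect to the $H$-weight decomposition, because the symplectic form $\Omega^\Ds_\alpha$ is (Remark~\ref{rem:isotropy-rep}); hence $\alpha\circ\Lv_\Ds$ is nondegenerate on $\Ds_z$ if and only if its restriction to $\Ds'_z=\Ds_z\cap T_zN'$ is nondegenerate, i.e.\ if and only if $i^*\alpha\circ\Lv_{\Ds'}$ is nondegenerate. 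This simultaneously shows $\Ds'=TN'\cap\Ds$ has corank $\ell$ in $TN'$ (so $(N',\Ds')$ has codimension $\ell$), that $(\Ds')^0\cong i^*\Ds^0$ canonically, and that $U_{\Ds'}=i^*U_\Ds$; in particular $(N',\Ds')$ is a contact manifold of rank $m'\leq m$ and codimension $\ell$.

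It remains to check that the induced $\Ab[N]/H$-action on $(N',\Ds')$ is a contact torus action which is locally Reeb type. Effectiveness on $N'$ is exactly the definition of $H$ as the kernel, so it descends to a faithful action of $\Ab[N]/H$ on $N'$, and it acts by contactomorphisms since it preserves $\Ds'=TN'\cap\Ds$; thus it is a contact torus action with infinitesimal generators $\bK'\colon\ab[N]/\h\to\con(N',\Ds')$. For local Reeb type I would use criterion~(iii) of Lemma~\ref{l:lrt}: the composite $q_{\Ds'}\circ\kappa'\colon N'\times(\ab[N]/\h)\to TN'/\Ds'$ must be surjective. But from the identifications above, $TN'/\Ds'\cong i^*(TN/\Ds)$ $\Ab[N]$-equivariantly, and the $\ab[N]$-generating vector fields on $N$ restrict along $N'$ to the $(\ab[N]/\h)$-generating vector fields on $N'$ (the subalgebra $\h$ acts trivially on $N'$, hence its generators vanish there); so $q_{\Ds'}\circ\kappa'$ is just the restriction to $N'$ of the surjective map $q_\Ds\circ\kappa$, composed with the isomorphism $i^*(TN/\Ds)\cong TN'/\Ds'$, and is therefore surjective. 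By Lemma~\ref{l:lrt}, $(N',\Ds',\bK')$ is locally Reeb type, completing the proof.

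The step I expect to be the main obstacle is the claim that the Levi form is block-diagonal with respect to the $H$-invariant decomposition of the symplectic slice, since that is what forces $U_{\Ds'}=i^*U_\Ds$ (rather than merely $\supseteq$) and makes the codimension of $\Ds'$ equal to $\ell$; the point is that $W_0$ (the $H$-fixed part) is $\Omega^\Ds_\alpha$-orthogonal to $\bigoplus_{i\geq1}W_i$ by Remark~\ref{rem:isotropy-rep}, and $\Ds$ meets the slice transversally to the $\Ab[N]$-orbit, so one has to track carefully how $\Ds_z$, $T_zN'$, and the orthogonal filtration~\eqref{eq:filt} interact — but this is a finite-dimensional linear-algebra verification once Lemma~\ref{l:sslice} is in hand.
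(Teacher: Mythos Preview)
Your argument works, but it takes a considerably heavier route than the paper's. The paper does not invoke the Symplectic Slice Theorem at all. Its key observation is that since $N'$ is $\Ab[N]$-invariant, the orbit distribution satisfies $i^*\Rb{}\leq TN'$; combined with the local Reeb type condition (which says $\Rb{}$ surjects onto $TN/\Ds$), this immediately forces the natural map $TN'/\Ds'\hookrightarrow TN/\Ds$ to be an isomorphism, so $\Ds'$ has codimension~$\ell$. Then $i_*\transp$ identifies $i^*\Ds^0$ with $(\Ds')^0$, and the inclusion $(\Ds')^0\hookrightarrow\Ds^0$ pulls $\tau^\Ds$ back to $\tau^{\Ds'}$, which handles the nondegeneracy loci in one line. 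Your weight-space analysis is essentially a hands-on proof that the $H$-fixed locus in the symplectic manifold $U_\Ds$ is symplectic; that is correct, but the paper gets there without any local model.

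There is also a small overreach in your block-diagonality step. From the $H$-invariant orthogonal splitting $\Ds_z=\Ds'_z\oplus W$ (with $W$ the sum of nonzero weight spaces) you only obtain the implication ``$\alpha\circ\Lv_\Ds$ nondegenerate on $\Ds_z$ $\Rightarrow$ nondegenerate on $\Ds'_z$'', not the converse you assert. For instance, on $N=\Sph^3\times\Sph^3$ with $H$ a circle in the first factor and $N'=\Sph^1\times\Sph^3$, any $\alpha$ supported on the second factor restricts nondegenerately to $\Ds'_z=\Ds_{2,z}$ but is degenerate on $\Ds_z$ (its kernel contains $\Ds_{1,z}$). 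So one has only $i^*U_\Ds\subseteq U_{\Ds'}$; that inclusion is exactly what is needed to conclude $(N',\Ds')$ is contact, and you should not claim more.
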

\begin{proof} The local Reeb type condition means that $\Ds\cap\Rb{}$ has
codimension $\ell$ in $\Rb{}$ (the tangent distribution to the $\Ab[N]$
orbits). Since $i^*\Rb{}\leq TN'$, $\Ds'$ has codimension $\ell$ in $TN'$, and
$i_*\transp\colon i^*T^*N\to T^*N'$ restricts to an isomorphism
$i^*\Ds^0\to(\Ds')^0$, which identifies $U_{\Ds'}$ with $i^* U_\Ds$ (since the
induced map $(\Ds')^0\to\Ds^0$ pulls the tautological $1$-form $\tau^\Ds$ back
to the corresponding $\tau^{\Ds'}$). Hence $(N',\Ds')$ is contact and the rest
is immediate.
\end{proof}
Note that $H=\Stab_{\Ab[N]}(z)$ for any $z$ in the open orbit stratum
corresponding to $N'$.

If a contact torus action $\bK$ of $\Ab[N]$ on $N$ has locally Reeb type,
Proposition~\ref{p:transverse} implies that the hamiltonian $\Ab[N]$-action on
$U_\Ds$ is transverse to the fibres of $p\colon\Ds^0\to N$. Thus for any
subtorus $H\leq\Ab[N]$, the fixed point set of $H$ in $U_\Ds$ is
$p^{-1}(N^H)$.

Recall that the \emph{critical submanifolds} of a smooth function $f\colon
N\to\R$ are the connected components of the zero-set of $\d f$; then $f$ is
called a \emph{Morse--Bott function} if along any critical submanifold, its
transverse hessian is nondegenerate.

\begin{lemma}\label{l:Bott} Suppose $(N,\Ds,\bK)$ has Reeb type. Then for any
nondegenerate Levi pair $(\g,\lamc)$ and any $v\in\ab[N]$,
$f:=\eta^\lamc(K_v)$ is a Morse--Bott function on $N$ whose critical
submanifolds all have even index.
\end{lemma}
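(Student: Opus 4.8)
The plan is to identify $f = \eta^\lamc(K_v)$ with a component of the momentum map on the symplectization $U_\Ds$, pulled back along the contact form $\eta^\lamc$, and then deduce the Morse--Bott property from the well-known fact that components of a hamiltonian torus momentum map are Morse--Bott with even-dimensional critical submanifolds of even index. Concretely, recall from the discussion preceding Stratagem~\ref{str:setup} that $\eta^\lamc(K_v) = \ip{\mu(\eta^\lamc),v}$, so $f = \phi_v \circ \eta^\lamc$ where $\phi_v\colon U_\Ds \to \R$ is the hamiltonian $\phi_v(\alpha) = \ip{\mu(\alpha),v}$ for the lifted action $\tilde\bK$. Since $(\g,\lamc)$ is a nondegenerate Levi pair, $\eta^\lamc\colon N\to U_\Ds$ is an embedding onto a section of $p\colon U_\Ds\to N$, and by Remark~\ref{r:affine} its image is exactly the affine slice $(\mu^\lamc)^{-1}(\As)$ cut out by the affine condition $\ip{\mu(\alpha),w}$ constant for $w\in\g$ — more precisely $\mu(\eta^\lamc_z) = \Ll\transp\mu^\lamc(z) \in (\ker\lamc)^0$.

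First I would make precise the claim that $\eta^\lamc(N)$ is a symplectic submanifold of $(U_\Ds,\Omega^\Ds)$. Indeed $(\eta^\lamc)^*\Omega^\Ds = \d\eta^\lamc$, which by the nondegeneracy hypothesis restricts to a nondegenerate form on $\Ds = \ker\eta^\lamc$; on the complementary Reeb directions $\Rb\g$ one uses $\d\eta^\lamc(K_w,\cdot) = -\d(\eta^\lamc(K_w)) = -\d\ip{w,\lamc}$, which vanishes precisely on $\ker\lamc$, so $\d\eta^\lamc$ has a one-dimensional kernel along $\ker\lamc\leq\g$ but is nondegenerate transverse to the Reeb flow of $\ker\lamc$. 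This is the "transverse symplectic" picture: $\eta^\lamc(N)$ is a (possibly degenerate) coisotropic-type slice whose null foliation is generated by the Reeb field of $\eta^\lamc$ restricted to $\g$. Second, I would invoke the standard local normal form for hamiltonian torus actions (Lemma~\ref{l:sslice} and Remark~\ref{rem:isotropy-rep}): near any critical point $z$ of $f$, i.e.\ any point where $K_{v,z}\in\Ds_z$, the lifted field $\tilde K_v$ vanishes at $\eta^\lamc(z)$ along the fibre (Proposition~\ref{p:transverse}), and in the symplectic slice coordinates $\phi_v$ becomes a quadratic form $\sum_i |w_i|^2 \ip{\wt_i, v}$ plus a linear term on $\h_\alpha^0$, with the critical manifold being the fixed locus where all $w_i$ with $\ip{\wt_i,v}\neq 0$ vanish; the transverse hessian is $\sum_{i:\ip{\wt_i,v}\neq 0} \ip{\wt_i,v}\,|w_i|^2$, which is nondegenerate, and its index is even because each $W_i$ is a complex ($\omega$-compatible) subspace, so $\wt_i$ and its negation contribute in complex-dimensional even blocks.

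The one real subtlety — and the step I'd flag as the main obstacle — is transferring this from $U_\Ds$ to $N$ through the slice $\eta^\lamc$, because $\eta^\lamc(N)$ is \emph{not} symplectic in the strict sense (it has the Reeb null direction of $\g$, not just of a single $\lamc$). The clean way around this: fix a complement $\g_0$ to $\ker\lamc$ in $\g$, so $\g = \ker\lamc \oplus \g_0$; then $\eta^\lamc(N)/(\text{Reeb flow of }\ker\lamc)$ is genuinely symplectic, the $\Ab[N]/H_0$-action on it is hamiltonian with $\phi_v$ descending (here $H_0$ is the torus with Lie algebra $\ker\lamc$, which acts trivially up to the Reeb flow), and $f$ on $N$ is the pullback of the descended hamiltonian along the submersion $\eta^\lamc(N)\to\eta^\lamc(N)/H_0$. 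Since pulling a Morse--Bott function back along a submersion with connected fibres preserves the Morse--Bott property and the index (the transverse hessian is unchanged, the critical submanifolds just acquire the fibre directions), the result follows. Alternatively, and perhaps more transparently, one simply runs the symplectic slice argument of Lemma~\ref{l:sslice} \emph{directly on $N$}: the local models in \S\ref{s:ss-torus} already give $\Ab[N]$-equivariant coordinates near any orbit in which $\eta^\lamc(K_v)$ is visibly a nondegenerate quadratic form in the weight coordinates (with even-dimensional negative eigenspace) plus an affine term in the flat directions, so $f$ is Morse--Bott of even index with no further argument needed. I would present the proof via this second route, citing Lemma~\ref{l:sslice} and Remark~\ref{rem:isotropy-rep} for the normal form and noting that the even index is immediate from the complex structure on each weight space $W_i$.
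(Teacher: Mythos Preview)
Your proposal has a genuine gap at the identification of critical points. You claim $z$ is critical for $f=\eta^\lamc(K_v)$ iff $K_{v,z}\in\Ds_z$, but this is wrong: since $\d f=-\iota_{K_v}\d\eta^\lamc$ and the kernel of $\d\eta^\lamc$ on $TN$ is the full Reeb distribution $\Rb\g$ (your own computation $\d\eta^\lamc(K_w,\cdot)=-\d\ip{w,\lamc}$ vanishes for \emph{every} $w\in\g$, because $\ip{w,\lamc}$ is a constant function on $N$; the kernel is $\ell$-dimensional, not one-dimensional), the correct condition is $K_{v,z}\in\Rb\g_z$. Consequently the critical locus of $f$ on $N$ is generally strictly larger than the zero set of $K_v$, so you cannot invoke Proposition~\ref{p:transverse} to conclude that $\tilde K_v$ vanishes at $\eta^\lamc_z$, and $\phi_v$ need not be critical there on $U_\Ds$. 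The standard Morse--Bott property of momentum-map components therefore does not transfer to $f$ by mere pullback along $\eta^\lamc$, and neither of your proposed routes (quotienting by a Reeb flow, or running the slice model directly) addresses this mismatch.

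The paper's proof supplies exactly the missing step: at a critical point $z$ one has $K_{v,z}\in\Rb\g_z$, so there exists $w\in\g$ with $K_{v-w,z}=0$; replacing $v$ by $v-w$ shifts $f$ by the constant $\ip{w,\lamc}$ (same critical submanifolds and Hessian), and now the critical point \emph{is} a genuine fixed point of $K_{v-w}$, so Remark~\ref{rem:isotropy-rep} applies with $H$ the subtorus generated by $\exp(K_{v-w})$. From that point your Hessian computation via the weight decomposition is correct. If you wish to salvage your quotient approach, note that you must quotient by all of $\Rb\g$ (not just $\Rb{\ker\lamc}$), and even then you still need the shift by $w\in\g$ to match critical points of the descended function with fixed points of a subtorus.
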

\begin{proof}[Proof cf.~\cite{Atiyah}]  Since $\d(\eta(K_v))
=-(\d\eta)(K_v,\cdot)$ and $\d\eta^\lamc$ has kernel $\Rb\g$, a point $z\in N$
is critical for $f$ iff $K_{v,z}\in \Rb\g$, in which case $\d(\eta(K_v))_z=0$.
This holds iff $\exists\,w\in \g$ such that $K_{v-w}$ vanishes at $z$, and
since $\eta^\lamc(K_{v-w})=f-\ip{w,\lamc}$ has the same critical submanifolds
as $f$, we may assume $w=0$. Let $Z$ be the critical submanifold of $f$
containing $z$; since $\eta(K_v)_z=0$ and $\d(\eta(K_v))\restr{Z}=0$, it
follows that $K_v$ vanishes along $Z$ and hence $Z$ is a component of the
fixed point set $N^H$ of the subtorus $H$ of $\Ab[N]$ generated by
$\exp(K_v)$.  By Remark~\ref{rem:isotropy-rep}, with $\alpha=\eta^\lamc_z$,
the tangent space $T_\alpha U_\Ds$ decomposes into symplectic weight spaces
and the zero weight space is the tangent space to $p^{-1}(Z)$.  Hence the
normal bundle to $Z$ in $N$ is isomorphic to the sum of the nonzero weight
spaces. The transverse hessian of $f$ at $z$ is the hermitian form
$(w_1,\ldots w_k)\mapsto\sum_i \wt_i(v) |w_i|^2$, which is nondegenerate of
even index because $K_v$ generates $H$.
\end{proof}

\begin{prop}\label{p:convex} Let $(N,\Ds)$ be a compact connected contact
manifold of rank $m$ and codimension $\ell$ with a contact action of a torus
$\Ab[N]$ with Lie algebra $\ab[N]$. Assume that the action is Reeb type
with Levi pair $(\g,\lamc)$.

For any $v_1,\ldots v_k\in \ab[N]$, the map $f\colon N \to \R^k$ with
components $f_i = \eta^\lamc(K_{v_i})$ satisfies
\begin{itemize}
\item[$(A)$] all fibres $f^{-1}(x)$ are empty or connected\textup;
\item[$(B)$] the image $f(N)$ is convex.
\end{itemize}
Further, if $Z_j:j\in\cI$ are the connected components of the set of common
critical points of $f_i$, then $f(Z_j)=\{p_j\}$ is a single point and $f(N)$
is the convex hull of $\{p_j\st j\in\cI\}$.
\end{prop}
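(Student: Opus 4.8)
The plan is to follow the classical Atiyah argument for convexity of momentum maps, adapted to the transverse symplectic setting via the Levi pair. The key point is that, by Lemma~\ref{l:Bott}, each component $f_i=\eta^\lamc(K_{v_i})$ is a Morse--Bott function with only even-index critical submanifolds, and the same is true of any rational linear combination $\sum a_i f_i=\eta^\lamc(K_{\sum a_i v_i})$ (since $\sum a_i v_i\in\ab[N]$ after clearing denominators, and scaling by a positive constant does not change the critical structure, while scaling by a negative constant replaces the index by its complement, still even because the total normal dimension to a critical submanifold of an $S^1$-action is even). The standard consequence (Atiyah's lemma) is that the level sets of such a function are connected and that it has connected (indeed, a unique) local maximum on any connected invariant set; this is what drives the induction.

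First I would prove $(A)$ by induction on $k$. For $k=1$, connectedness of the fibres of a Morse--Bott function with no critical submanifolds of index or coindex one is the classical lemma of Atiyah (using that removing such critical submanifolds does not disconnect level sets, and a Morse-theory deformation argument). For the inductive step, assuming $(f_1,\dots,f_{k-1})$ has connected fibres, one fixes $x'=(x_1,\dots,x_{k-1})$ and studies $f_k$ restricted to the (connected, by induction) fibre $F=(f_1,\dots,f_{k-1})^{-1}(x')$; one shows $F$ is either empty or a connected set on which $f_k$ still has the ``no index/coindex one'' property at its critical points, so its level sets inside $F$ — which are the fibres of $f$ over points above $x'$ — are connected. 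The technical care here is that $F$ need not be a manifold, so one argues as Atiyah does, perturbing $(v_1,\dots,v_{k-1})$ to a rationally independent tuple so that the common critical set is a genuine submanifold (the fixed locus of a subtorus), then taking a limit; alternatively one invokes the fibres being connected for a dense set of $x'$ and using properness/compactness of $N$.

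Next I would deduce $(B)$: convexity of $f(N)$ follows from $(A)$ by the standard argument that, for any linear functional $\xi$ on $\R^k$, the function $\xi\circ f=\eta^\lamc(K_w)$ (with $w=\sum\xi_i v_i$) has connected level sets and attains a unique connected maximum set, so $f(N)$ has connected intersection with every line in every direction; a compact subset of $\R^k$ whose intersection with each line is connected (an interval or empty) is convex. For the final assertion, note that the common critical set of the $f_i$ is exactly the common critical set of all functions $\xi\circ f$, and by Lemma~\ref{l:Bott} (applied after perturbing to rationally independent $v_i$, with a limiting argument) each connected component $Z_j$ is a fixed submanifold of a subtorus; since each $f_i$ is locally constant on its critical submanifolds and $Z_j$ is connected, $f(Z_j)=\{p_j\}$ is a single point. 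Finally, for any direction $\xi$, the maximum of $\xi\circ f$ on $N$ is attained on a critical submanifold, hence on some $Z_j$, so every supporting hyperplane of the convex set $f(N)$ passes through some $p_j$; thus $f(N)\subseteq\mathrm{conv}\{p_j\}$, and the reverse inclusion is clear since each $p_j\in f(N)$. I expect the main obstacle to be the inductive step of $(A)$: handling the possibly-singular fibres $F=(f_1,\dots,f_{k-1})^{-1}(x')$ and justifying that $f_k|_F$ inherits the connectedness-forcing properties, which is exactly the subtle point in Atiyah's original proof and where the ``even index'' hypothesis from Lemma~\ref{l:Bott} and a perturbation/limit argument must be used carefully. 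Since the proposition is stated as essentially a special case of results of Ishida and of Atiyah, I would in fact present the proof as an adaptation of Atiyah's, citing \cite{Atiyah,Ish} for the parts that transfer verbatim and only elaborating the points where the transverse symplectic (Levi pair) structure enters through Lemma~\ref{l:Bott}.
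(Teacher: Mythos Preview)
Your proposal is correct and matches the paper's approach exactly: the paper does not give a detailed proof but simply states that it is ``essentially identical to the proof of the symplectic convexity theorem by M.~Atiyah,'' with Lemma~\ref{l:Bott} (which uses Condition~\ref{cond:trans}) as the key ingredient, and cites~\cite{Atiyah,GS,Ish}. Your outline is a faithful and accurate expansion of precisely that argument.
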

The proof is essentially identical to the proof of the symplectic convexity
theorem by M. Atiyah~\cite{Atiyah} (see also~\cite{GS}): the key ingredient is
Lemma~\ref{l:Bott}, which makes essential use of Condition~\ref{cond:trans}
(the transversality property of the subalgebra $\g$).  Without this
assumption, convexity may fail even in codimension one~\cite{Lerman-cuts},
although in that case, the conditions needed for convexity are well
understood~\cite{ChiKar,Lerman-conv,Lerman-toric,Lerm-geod-flow}. A similar
convexity result for transverse symplectic foliations has been obtained
recently by Ishida~\cite{Ish}.

\section{Toric contact manifolds}

\subsection{The quotient manifold with corners}\label{s:mfd-w-c}

If $(N,\Ds)$ is contact of rank $m$, then $\rank\Rb{}\cap\Ds\leq m$, and hence
$\dim\ab[N]\leq m+\ell$. On any open set where $\rank\Rb{}=m+\ell$,
$(N,\Ds,\bK)$ is locally Reeb type.

\begin{defn} A compact contact manifold $(N,\Ds)$ of rank $m$ and
codimension $\ell$ with a contact torus action $\bK$ of $\Ab[N]$ is
\emph{toric} if it is locally Reeb type and $\dim\Ab[N]=m+\ell$.
\end{defn}

When $(N,\Ds,\bK)$ is toric, $U_{\Ds}$ is a (noncompact) toric symplectic
manifold under the lifted hamiltonian $\Ab[N]$-action $\tilde\bK$ of
Observation~\ref{o:lift-ham-act}.

\begin{lemma} \label{l:toric-strata} Let $(N,\Ds,\bK)$ be a toric contact
manifold of codimension $\ell$. Then its closed orbit strata are toric contact
of codimension $\ell$, and for any $z\in N$, its stabilizer
$H:=\Stab_{\Ab[N]}(z)$ is connected \textup(i.e., a subtorus\textup). For any
$\alpha\in p^{-1}(z)$, $H=\Stab_{\Ab[N]}(\alpha)$ and the normal bundle in $N$
to the fixed point set $N^H$ is isomorphic to the symplectic isotropy
representation of $H$, which is a direct sum of $2$-dimensional symplectic
subrepresentations whose weights form a basis for the weight lattice of
$\stab_{\ab[N]}(z)^*$.
\end{lemma}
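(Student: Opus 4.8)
The plan is to deduce everything from the symplectic slice theory already set up, together with the transversality of the lifted action (Proposition~\ref{p:transverse}). First, one observes that Proposition~\ref{p:orbit-strata} already gives that each closed orbit stratum $N'$ is a contact manifold of codimension $\ell$ which is locally Reeb type under the induced $\Ab[N]/H$-action; since $N'$ is closed in the compact manifold $N$ it is compact, and since $\dim(\Ab[N]/H) = (m+\ell) - \dim H$ while the rank of $N'$ drops by exactly $\dim H$ (each nonzero weight of the symplectic isotropy representation contributes a complex line to the normal bundle, and these lines account for the drop), the toric condition $\dim\Ab[N]/H = m' + \ell$ is inherited. Making this dimension count precise is the one place that requires a little care, and I will return to it below.

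Second, the connectedness of stabilizers: fix $z\in N$ and any $\alpha\in p^{-1}(z)$, and recall from Proposition~\ref{p:transverse} that $\stab_{\ab[N]}(\alpha) = \stab_{\ab[N]}(z)$, and more generally $\Stab_{\Ab[N]}(\alpha) = \Stab_{\Ab[N]}(z) =: H$, because the lift $\tilde\bK$ is transverse to the fibres of $p$. Now $\alpha\in U_\Ds$ (it suffices to take $\alpha$ in the open dense symplectization, and stabilizers are upper semicontinuous so the generic fibre statement suffices — alternatively one works directly with any $\alpha$, since $U_\Ds$ meets $p^{-1}(z)$). Since $(U_\Ds,\Omega^\Ds)$ is a symplectic manifold on which $\Ab[N]$ acts effectively and $\dim U_\Ds = 2(m+\ell) = 2\dim\Ab[N]$, the lifted action is toric in the usual symplectic sense, and it is a standard fact (Delzant, cf.~\cite{Delzant,LT,Audin}) that point stabilizers of an effective symplectic toric action are connected subtori. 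Hence $H$ is a subtorus.

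Third, the structure of the normal bundle. Apply the Symplectic Slice Theorem, Lemma~\ref{l:sslice}, at $\alpha\in U_\Ds$ with $H = \Stab_{\Ab[N]}(\alpha) = H_\alpha$ (connected by the previous step). The slice model~\eqref{eq:slice-model} identifies a neighbourhood of $\Ab[N]\cdot\alpha$ in $U_\Ds$ with $\Ab[N]\times_H(W^\alpha\dsum\h_\alpha^0)$; the fixed-point set $(U_\Ds)^H$ corresponds to the zero-weight part, and the normal bundle to $(U_\Ds)^H$ is the associated bundle with fibre $\bigoplus_{i=1}^k W_i$, a sum of $H$-weight spaces. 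Because the lifted action is toric and effective and $W_0 = T_\alpha((U_\Ds)^H)$ contains $T_\alpha(\Ab[N]\cdot\alpha)$ with the right codimension, one gets $W^\alpha_0 = 0$ in Definition~\ref{d:symp-iso-rep}, so $W^\alpha = \bigoplus_{i=1}^k W_i$ is exactly the symplectic isotropy representation, each $W_i$ is a $2$-dimensional symplectic subrepresentation with weight $\beta_i$, and by Remark~\ref{rem:isotropy-rep} (effectiveness of the $H$-action) the $\beta_i$ span the weight lattice of $\h_\alpha^* = \stab_{\ab[N]}(z)^*$; by the dimension count $\dim H = k$ they form a basis. Finally, since $p\colon U_\Ds\to N$ is $\Ab[N]$-equivariant and transverse to $H$ by Proposition~\ref{p:transverse}, it restricts to an equivariant submersion $(U_\Ds)^H = p^{-1}(N^H)\to N^H$ with one-dimensional fibres on which $H$ acts trivially; differentiating $p$ therefore carries the normal bundle of $(U_\Ds)^H$ in $U_\Ds$ isomorphically (as an $H$-equivariant symplectic vector bundle) onto the normal bundle of $N^H$ in $N$, giving the claim.

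The main obstacle is the bookkeeping in the dimension/rank count: one must verify that passing from $N$ to a closed orbit stratum $N'$ with stabilizer subtorus $H$ of dimension $k$, the rank drops by exactly $k$ — equivalently that the symplectic isotropy representation $W^\alpha$ has complex dimension $k$, with no ``extra'' trivial summand $W^\alpha_0$. This follows because $U_\Ds$ is symplectic toric, so along the $H$-fixed set the zero-weight space of $T_\alpha U_\Ds$ is precisely $T_\alpha(\Ab[N]\cdot\alpha) + W_0'$ where $W_0'$ is the $H$-fixed symplectic slice of the toric action, and effectiveness of the residual $(\Ab[N]/H)$-action on $(U_\Ds)^H$ forces $W_0'$ to have dimension $2(m'+\ell) - 2(m+\ell - k)\cdot\!$… in short, a standard local computation for toric symplectic manifolds; I would phrase it by noting $(U_\Ds)^H$ is itself a (connected component of a) toric symplectic manifold of half-dimension $\dim(\Ab[N]/H)$ and that $p^{-1}(N')$ is an open subset of it, whence $N'$ has rank $m' = \dim(\Ab[N]/H) - \ell$ and is toric of codimension $\ell$. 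Everything else is an application of results already in hand.
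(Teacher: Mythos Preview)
Your argument is correct and follows the same route as the paper: lift to the toric symplectic manifold $U_\Ds$, use the symplectic slice and the effectiveness of the isotropy representation to pin down the weight-space decomposition, and read off the normal bundle via $p$. Two small points. First, the paper's dimension count is cleaner than yours: since $\dim T_\alpha(\Ab[N]\cdot\alpha)=m+\ell-k$, one has immediately $\dim W^\alpha = 2(m+\ell)-2(m+\ell-k)=2k$, and then the fact that the nonzero weights span $\stab_{\ab[N]}(\alpha)^*$ forces them to be a basis with $2$-dimensional weight spaces and $W^\alpha_0=0$; you do not need to first argue that $(U_\Ds)^H$ is toric symplectic. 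Second, your phrase ``one-dimensional fibres'' is a slip: the fibres of $p$ have dimension $\ell$, not $1$ (this does not affect the conclusion, since the point is only that the vertical tangent is contained in $T(U_\Ds)^H$, so $p_*$ induces an isomorphism on normal spaces). The connectedness of $H$ you cite as a standard fact; the paper gives the one-line reason, namely that $H$ embeds as a closed abelian subgroup of $\mathrm{Sp}(W^\alpha)\cong\mathrm{Sp}(2k)$ whose identity component is already a maximal torus.
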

\begin{proof}[Proof cf.~\cite{Delzant,LT}] Let $m$ be the rank of $N$, $z
\in N$, $\alpha\in p^{-1}(z)$ and $\dim \stab_{\ab[N]}(z)=k$. Thus $\Ab[N]/H$
has dimension $m+\ell-k$, $U_\Ds$ has dimension $2(m+\ell)$ and the symplectic
isotropy representation $W^\alpha$ has dimension
$2(m+\ell)-2(m+\ell-k)=2k$. The nonzero weights span
$\stab_{\ab[N]}(\alpha)^*$, so they are linearly independent with
$2$-dimensional weight spaces, and $W^\alpha_0=0$. Hence the open orbit
stratum through $z$ has dimension $2(m-k)+\ell$ and its closure is toric by
Proposition~\ref{p:orbit-strata}. Since maximal tori in Sp$(2k)$ are maximal
closed abelian subgroups, $H$ is connected, and is therefore equal to
$\Stab_{\Ab[N]}(\alpha)$.
\end{proof}

Thus inverse images under $p$ of orbit strata in $N$ are orbit strata in
$U_\Ds$, and Lemma~\ref{l:sslice} specializes as follows.

\begin{prop} \label{p:local-mu} For any $\alpha\in U_\Ds$, there is
a $\Ab[N]$-invariant neighbourhood $\cU$ of the orbit $\Ab[N]\cdot\alpha\sub
U_\Ds$ on which image of the momentum map $\mu|_{\cU}-\mu(\alpha)$ is a
neighbourhood of $0$ in the product of a quadrant with
$\stab_{\ab[N]}(\alpha)^0$ in $\ab[N]^*$, it is a submersion over the interior
of this cone, and the fibres \textup(in $\cU$\textup) are $\Ab[N]$-orbits.
\end{prop}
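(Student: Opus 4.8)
The plan is to derive Proposition~\ref{p:local-mu} directly from the Symplectic Slice Theorem (Lemma~\ref{l:sslice}) together with the structural information about toric stabilizers supplied by Lemma~\ref{l:toric-strata}. First I would fix $\alpha\in U_\Ds$ and set $z=p(\alpha)$, $H=\Stab_{\Ab[N]}(\alpha)$, $\h_\alpha=\stab_{\ab[N]}(\alpha)=\stab_{\ab[N]}(z)$ of dimension $k$. By Lemma~\ref{l:toric-strata}, $H$ is a subtorus, $W^\alpha_0=0$, and the symplectic isotropy representation is $W^\alpha=\bigoplus_{i=1}^k W_i$ with $2$-dimensional weight spaces $W_i$ whose weights $\wt_1,\ldots,\wt_k$ form a basis of the weight lattice of $\h_\alpha^*$. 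Then the momentum cone $\cC^\alpha$ of Definition~\ref{d:symp-iso-rep} is the quadrant generated by a lattice basis, i.e.\ a \emph{unimodular} $k$-dimensional cone.

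Next I would choose a splitting $\chi\colon\ab[N]\to\h_\alpha$ and invoke Lemma~\ref{l:sslice}: there is a $\Ab[N]$-invariant neighbourhood $\cU$ of $\Ab[N]\cdot\alpha$ and an equivariant symplectomorphism onto a neighbourhood of the zero section of $\cY=\Ab[N]\times_H(W^\alpha\dsum\h_\alpha^0)$ (see~\eqref{eq:slice-model}) under which $\mu$ corresponds to $\mu_\chi$ of~\eqref{eq:mu-chi}. Hence, up to the affine shift by $\mu(\alpha)$, the image is the set of $\chi\transp(\mu_W(v))+\xi$ for $(v,\xi)$ in a neighbourhood of $0$ in $W^\alpha\dsum\h_\alpha^0$. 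Since $\mu_W(v)=\sum_i|w_i|^2\wt_i$ ranges over a neighbourhood of $0$ in the quadrant $\cC^\alpha$ as $v$ ranges near $0$, and $\chi\transp$ is injective on $\h_\alpha^*$ with $\chi\transp(\h_\alpha^*)$ complementary to $\h_\alpha^0$ (because $\chi$ splits the surjection $\ab[N]\to\h_\alpha$), the sum $\chi\transp(\cC^\alpha)+\h_\alpha^0$ is a genuine product decomposition inside $\ab[N]^*$. Thus $\mu|_\cU-\mu(\alpha)$ covers a neighbourhood of $0$ in the product of the quadrant $\chi\transp(\cC^\alpha)$ with $\h_\alpha^0$, which is the first assertion.

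For the submersion and fibre statements I would work in the slice model~\eqref{eq:slice-model}. Writing points as $[g,v+\xi]$ and using polar-type coordinates $w_i=r_i e^{\iI\theta_i}$ on each $W_i$, the $\Ab[N]$-action rotates the $\theta_i$ (through the weights $\wt_i\circ\chi$) and translates in the $\Ab[N]/H$ directions, while $\mu_\chi$ depends only on the radii $r_i^2$ and on $\xi$. Over the interior of the cone all $r_i>0$, so the map $(r_1^2,\ldots,r_k^2,\xi)\mapsto\chi\transp(\sum r_i^2\wt_i)+\xi$ has surjective differential — here I use that $\wt_1,\ldots,\wt_k$ are linearly independent and $\chi\transp$ is injective on $\h_\alpha^*$ — so $\mu_\chi$ is a submersion there; and the remaining coordinates $(\theta_1,\ldots,\theta_k,\text{base point})$ parametrise exactly the $\Ab[N]$-orbit, since at an interior point the stabilizer is trivial in the $\theta$-directions and $H$ already acts with the rotations. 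Hence the fibres over interior points are single $\Ab[N]$-orbits. The main obstacle is purely bookkeeping: one must check that the decomposition $\ab[N]^*=\chi\transp(\h_\alpha^*)\oplus\h_\alpha^0$ is compatible with the momentum formula so that ``product of a quadrant with $\h_\alpha^0$'' is literally correct and independent of $\chi$ up to the relevant identifications; this follows because different splittings differ by a map into $\h_\alpha$, changing $\chi\transp\mu_W(v)$ by an element of $\h_\alpha^0$ and so not affecting the image, only the coordinates on it.
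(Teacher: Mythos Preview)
Your proposal is correct and follows exactly the approach the paper intends: the paper gives no separate proof of Proposition~\ref{p:local-mu}, merely stating that ``Lemma~\ref{l:sslice} specializes as follows'' after Lemma~\ref{l:toric-strata}, and you have filled in precisely that specialization. One small point: the paper later uses the proposition (in the proof of Theorem~\ref{t:polytope}) to conclude that \emph{all} connected fibres of $\mu$ are $\Ab[N]$-orbits, not just those over the interior of the cone, so you should note that your polar-coordinate argument extends to boundary faces---when some $r_i=0$, the remaining weights $\wt_j$ ($j\neq i$) are still linearly independent, so $H$ still acts transitively on the product of the nonzero circles and the fibre is again a single orbit.
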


Karshon and Lerman~\cite{KL} observe that these charts make $U_\Ds/\Ab[N]$
into a manifold with corners.  Recall (see e.g.~\cite{Joyce}) that an
$m$-dimensional \emph{manifold with corners} is a Hausdorff topological space
$Q$ equipped with a subsheaf of the continuous functions which is locally
isomorphic (as a space with a sheaf of rings) to a quadrant in a (finite
dimensional, real) vector space, hence also to $[0,\infty)^m\sub \R^m$. Here,
a function $f$ on a subset $B$ of vector space is smooth iff every $v\in B$
has an open neighbourhood $U$ such that $f|_{B\cap U}$ is the restriction of a
smooth function on $U$. Any $x\in Q$ has a neighbourhood isomorphic to a
neighbourhood of the origin in the product of a quadrant and a vector space,
i.e., to $[0,\infty)^k\times \R^{m-k}\sub \R^m$ for a well-defined $k\in\N$
called the \emph{depth} of $x$. The subspace $Q_k$ of all points with depth
$k$ is a smooth manifold of dimension $m-k$. The closures in $Q$ of the
connected components of $Q_k$ are called the $(m-k)$-dimensional faces of $Q$.
The \emph{combinatorics} $\Cb_Q$ of $Q$ is the poset of all faces, ordered by
inclusion. Faces of depth $1$ are called \emph{facets}.

In our situation, the momentum map $\mu$ also maps the fibre $p^{-1}(y)$, for
$y\in N$, to the linear subspace $\gM_y\leq \stab_{\ab[N]}(y)^0\leq \ab[N]^*$.
Since $\Stab_{\ab[N]}(y)$ fixes this fibre pointwise,
Proposition~\ref{p:local-mu} implies that $\mu(\cU)$ is foliated by its
intersection with the linear subspaces $\gM_{p(\beta)}:\beta\in\cU$. On the
other hand $\mu(\cU)$ is an open neighbourhood of $z=\mu(\alpha)$, and
shrinking $\cU$ if necessary, we may assume that the foliation of $\mu(\cU)$
is regular, so that the leaf space is isomorphic to an open neighbourhood of
zero in the product of a quadrant with $\stab_{\ab[N]}(\alpha)^0/\gM_z$.

\begin{cor}\label{c:manifoldCORNERS} The orbit space $N/\Ab[N]$ has the
structure of a manifold with corners such that the quotient map induces a
poset isomorphism from $\Cb_N$ to $\Cb_{N/\Ab[N]}$.
\end{cor}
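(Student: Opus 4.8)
The plan is to combine the local model for the $\Ab[N]$-action on $N$ provided by the symplectic slice theorem with the foliation of $\mu(\cU)$ by the linear subspaces $\gM_{p(\beta)}$, exactly as set up in the paragraph preceding the corollary. First I would fix a point $y\in N$, choose $\alpha\in p^{-1}(y)$, and apply Proposition~\ref{p:local-mu} to obtain an $\Ab[N]$-invariant neighbourhood $\cU$ of $\Ab[N]\cdot\alpha$ in $U_\Ds$ such that $\mu|_\cU$ surjects onto a neighbourhood of $\mu(\alpha)$ in the product of a quadrant with $\stab_{\ab[N]}(\alpha)^0$, with fibres equal to $\Ab[N]$-orbits over the interior. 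By Proposition~\ref{p:transverse}, $\stab_{\ab[N]}(\alpha)=\stab_{\ab[N]}(y)$, and $p(\cU)$ is an $\Ab[N]$-invariant neighbourhood of the orbit $\Ab[N]\cdot y$ in $N$; since $U_\Ds$ is dense in $\Ds^0$ and meets every fibre, every point of $N$ arises as $p(\alpha)$ for some $\alpha\in U_\Ds$, so these neighbourhoods cover $N$.

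Next I would identify $p(\cU)/\Ab[N]$ with the leaf space of the foliation of $\mu(\cU)$ by the affine slices $\gM_{p(\beta)}$. Concretely, since the $\Ab[N]$-action on $U_\Ds$ is transverse to the fibres of $p$, two points $\beta,\beta'\in\cU$ lie in the same $\Ab[N]$-orbit if and only if $\mu(\beta)=\mu(\beta')$ (over the interior this is Proposition~\ref{p:local-mu}; in general one uses that $\mu$ separates $p^{-1}(y)$-images via the linear embedding $(p,\mu)$ restricted to slices and that the local model $\cY$ of Lemma~\ref{l:sslice} has this property), so $\cU/\Ab[N]$ maps homeomorphically onto $\mu(\cU)$. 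The fibre $p^{-1}(y)\cap\cU$ maps to an open piece of the linear subspace $\gM_y\leq\stab_{\ab[N]}(y)^0$, and hence $p(\cU)/\Ab[N]$ is the quotient of $\mu(\cU)$ by the foliation whose leaves are the intersections with the subspaces $\gM_{p(\beta)}$. After shrinking $\cU$ so this foliation is regular, the leaf space is a neighbourhood of the origin in the product of a quadrant with $\stab_{\ab[N]}(\alpha)^0/\gM_y$; this exhibits $N/\Ab[N]$ as locally isomorphic (as a space with a sheaf of smooth functions) to a quadrant times a vector space, i.e.\ a manifold with corners in the sense of~\cite{Joyce}. One should check the transition maps between two such charts are smooth isomorphisms of manifolds with corners, which follows because on overlaps both are identified via $\mu$ (up to the respective translations) with open subsets of $\ab[N]^*$ modulo the $\gM$-foliation, and $\mu$ is $\Ab[N]$-invariant and smooth.

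Finally I would verify the poset isomorphism $\Cb_N\to\Cb_{N/\Ab[N]}$. The depth of a point $[y]\in N/\Ab[N]$ in the chart above is the number of nonzero weights of the symplectic isotropy representation $W^\alpha$, which by Lemma~\ref{l:toric-strata} equals $\dim\stab_{\ab[N]}(y)=k$; the stratum $Q_k$ of depth-$k$ points corresponds precisely to the locus where $\Stab_{\Ab[N]}(y)$ is a fixed subtorus of dimension $k$, i.e.\ to the image of an open orbit stratum $N_{(H)}$. Taking closures, a closed orbit stratum $N'\in\Cb_N$ maps onto a face of $N/\Ab[N]$ (using Lemma~\ref{l:toric-strata}, which says $N'$ is again toric contact, so the quotient map restricted to $N'$ is again of this local form and has connected fibres, hence $N'/\Ab[N]$ is connected), and conversely every face arises this way; inclusion is manifestly preserved and reflected.

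The main obstacle I expect is the global injectivity of the map $\cU/\Ab[N]\to\mu(\cU)$ and the verification that the leaf space of the $\gM$-foliation is Hausdorff and genuinely a manifold with corners rather than merely a local quotient: this requires knowing the orbits of $\Ab[N]$ in $\cU$ are exactly the fibres of $\mu$ on all of $\cU$, not just over the interior of the cone, which one gets from the explicit local model $\cY=\Ab[N]\times_{\Stab_{\Ab[N]}(\alpha)}(W^\alpha\dsum\h_\alpha^0)$ of Lemma~\ref{l:sslice} together with $W^\alpha_0=0$ from Lemma~\ref{l:toric-strata}. Once the local model is pinned down this is a routine check, but it is the step where the hypotheses ``toric'' and ``locally Reeb type'' are genuinely used.
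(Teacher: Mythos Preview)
Your proposal is correct and follows essentially the same approach as the paper: the corollary is stated immediately after the paragraph you are expanding, and the paper's argument is precisely to pass from the Karshon--Lerman charts on $U_\Ds/\Ab[N]$ (via Proposition~\ref{p:local-mu}) to charts on $N/\Ab[N]$ by quotienting $\mu(\cU)$ by the regular foliation with leaves $\gM_{p(\beta)}\cap\mu(\cU)$. Your additional remarks on the poset isomorphism and on the injectivity of $\cU/\Ab[N]\to\mu(\cU)$ (using $W^\alpha_0=0$ from Lemma~\ref{l:toric-strata}) fill in details the paper leaves implicit.
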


\begin{rem}\label{r:sslice}
We shall also need to know that the symplectic slices in Lemma~\ref{l:sslice}
may be chosen compatible with the foliation of $U_\Ds$ over $M$. For this
recall that the Symplectic Slice Theorem proceeds by constructing a
differentiable slice to $\Ab[N]\cdot\alpha$ in $U_\Ds$ and then using the
Equivariant Relative Darboux Theorem to standardize the symplectic form on
$\cY=\Ab[N]\times_{\Stab_{\Ab[N]}(\alpha)} (W^\alpha\dsum\h_\alpha^0)$. Now although
the $\Ab[N]$ orbit of the fibre over $z\in N$ is $\mu_\chi^{-1}(\gM_z)$, with
$\gM_z\leq \h_\alpha^0$, there is no reason to suppose a priori that the
foliation of this orbit over $M$ is compatible with the fibration over
$\Ab[N]\cdot\alpha\cong \Ab[N]/\Stab_{\Ab[N]}(\alpha)$.

To establish this, we use the transitive action of equivariant
symplectomorphisms on angular coordinates. Since we have not found a local
proof of this in the literature, we sketch it here, using arguments
from~\cite{Weinstein}.  First, we recall from Lemma~\ref{l:toric-strata} that
$W^\alpha$ is a direct sum of $2$-dimensional weight spaces $W_1,\ldots W_k$
where $0\leq k=\dim\h_\alpha\leq m$.  Compatible complex structures on each
$W_j$ yield $\Ab[N]$-invariant angular coordinates $\theta_1,\ldots \theta_k$
on $\cY^0=\{[g,w_1+\cdots w_k+\xi]\in \cY\st w_i\neq 0\}$. The level surfaces
of $\theta_1,\ldots \theta_k$ form a $\Ab[N]$-invariant coisotropic foliation
of $\cY^0$.  Extending by angular coordinates on $\Ab[N]\cdot\alpha$ (pulled
back to $\cY^0$, we obtain an $\Ab[N]$-invariant lagrangian foliation $\cF_0$
of $\cY^0$, singular along the special orbits $w_i=0$. The fibres of $U_\Ds\to
M$ need not lie in these leaves, but they do lie in the coisotropic
foliation. Thus the coisotropic foliation contains an $\Ab[N]$-invariant
lagrangian foliation $\cF_1$ containing the fibres of $U_\Ds$ and tangent to
$\cF_0$ along the singular orbits. On a neighbourhood of the zero section of
$\cY$, there is therefore an $\Ab[N]$-equivariant diffeomorphism, equal to the
identity along the zero section, pulling back $\cF_1$ to $\cF_0$. The pullback
of $\Omega_\chi$ is thus a $\Ab[N]$-invariant symplectic form $\Omega_1$
agreeing with $\Omega_\chi$ along the zero section, and $\cF_0$ is lagrangian
for both $\Omega_1$ and $\Omega_\chi$.

We now apply the Equivariant Moser Lemma, using the observation that for
equivariant differential forms on the total space of an equivariant vector
bundle, the homotopy operator can be defined by
\[
I(\beta) = \int_0^1 \sigma_t^*(\iota_{X_t}\beta)\d t,
\]
where $\sigma_t$ denotes fibrewise scalar-multiplication by $t$ and $X_t$ is
its derivative (the radial vector field). Now, following~\cite{Weinstein}, we
set $\beta=\Omega_1-\Omega_\chi$ and $\Omega_t=\Omega_\chi + t\beta$, noting
that $\Omega_t$ is symplectic for $t\in[0,1]$ and that $\beta=\d\alpha$ with
$\alpha=I(\beta)$, so we may define $Y_t$ by $\alpha=-\iota_{Y_t}\Omega_t$;
the local flow $\phi_t$ then satisfies that $\phi_t^*\Omega_t$ is independent
of $t$, so that $\phi_1^*\Omega_1 = \phi_0^*\Omega_0=\Omega_\chi$. However for
any $\xi$ tangent to $\cF_0$,
\[
\Omega_t(Y_t,\xi) = -\alpha(\xi)
= -\int_0^1 \sigma_t^*(\Omega_1-\Omega_\chi)(X_t,\xi)=0
\]
since $X_t$ is radial, hence also tangent to $\cF_0$, and $\cF_0$ is isotropic
with respect to both $\Omega_1$ and $\Omega_\chi$. Indeed it is \emph{maximal}
isotropic, and so it follows that $Y_t$ is tangent to $\cF_0$ for all
$t$. Hence $\phi_t$ preserves $\cF_0$. We now have a symplectic slice for
$U_\Ds$ along $\Ab[N]\cdot\alpha$ in which the fibre over $z$ is contained in
$\cF_0\cap\mu_\chi^{-1}(\gM_z)$, hence in a fibre of $\cY$ over
$\Ab[N]\cdot\alpha$.
\end{rem}

\subsection{The toric symplectic cone and Levi pairs}\label{s:tsc}

We combine the local theory of the previous section with
Proposition~\ref{p:convex} to establish global properties of toric contact
manifolds of Reeb type.  We use the setting of Strategem~\ref{str:setup} in
which pairs $(\g,\lamc)$ are given by linear maps $\Ll\colon\ab[N]\to\torh$
with $\d\circ\Ll=\ul\colon\ab[N]\to\tor$.  Recall that a convex polytope
$\Pol$ in the $m$-dimensional affine space
$\As=(\afs\transp)^{-1}(1)\sub\torh^*$ is a subset of the form
\begin{equation*}
\Pol:=\{\xi\in \As\st \forall s\in\cS,\enskip L_s(\xi)\geq 0\}
\end{equation*}
where $\cS$ is a finite set, and $L_s\in\torh$ (an affine function on $\As$)
for each $s\in \cS$.

\begin{defn} Given $\Pol\sub\As$ as above, and $\xi\in \As$, let $S_\xi=
\{s\in\cS\st L_s(\xi)=0\}$ and $\Cb_\Pol=\{S_\xi\sub\cS \st \xi\in\Pol\}$ with
the induced partial ordering. We assume $\emptyset\in\Cb_\Pol$ (so
$\Pol\sub\As$ has nonempty interior) and all singletons $\{s\}:s\in\cS$ belong
to $\Cb_\Pol$ (otherwise we may discard $s$ without changing $\Pol$). The
poset $\Cb_\Pol$ is then called the \emph{combinatorics} of $\Pol$; it is
isomorphic to the poset of closed faces of $\Fa$ of $\Pol$ via the map sending
$S\in\Cb_\Pol$ to $\Fa_S=\{\xi\in\Pol\st S\sub S_\xi\}=\{\xi\in\Pol\st \forall
s\in S,\enskip L_s(\xi)=0\}$---in particular $S_\emptyset=\Pol$, and $s\in\cS$
may be identified with the facet (codimension one face)
$\Fa_s:=\Fa_{\{s\}}$. Any closed face is the intersection of the facets
containing it: $\Fa_S=\bigcap_{s\in S}\Fa_s$. We say $\Pol$ is \emph{simple}
if every vertex is $m$-valent (or equivalently $\Cb_\Pol$ is a simplicial set:
if $S'\sub S\in\Cb_\Pol$ then $S'\in\Cb_\Pol$). In this case $\Pol$ is a
manifold with corners.
\end{defn}

Recall also from Definition~\ref{d:Ncomb} that the combinatorics of $N$ is the
poset $\Cb_N$ of closed orbit strata of $N$. The analogue of a facet is a
closed orbit stratum stabilized by a circle, so we take $\cS$ to be (in
bijection with) this subset of $\Cb_N$, i.e., for each $s\in\cS$,
$N_s\in\Cb_N$ is a connected component of $N^{H_s}$ where
$H_s=\{\exp(t\e_s):t\in\R\} \leq \Ab[N]$ is a circle with primitive generators
$\pm\e_s\in\Lam\sub\ab[N]$.

\begin{defn} Let $(N,\Ds,\bK)$ be a connected toric contact manifold. For any
nondegenerate Levi pair $(\g,\lamc)$, we denote by $U^{(\g,\lamc)}$ the
connected component of $U_\Ds$ containing $\eta^\lamc(N)$.  We say $U\sub
U_\Ds$ is a \emph{Reeb component} of $U_\Ds$ if $U=U^{(\g,\lamc)}$ for some
nondegenerate Levi pair $(\g,\lamc)$.
\end{defn}
Clearly a Reeb component exists if and only if the contact action has Reeb
type.

\begin{thm} \label{t:polytope} Suppose $(N,\Ds,\bK)$ is a
\textup(compact, connected\textup) toric contact manifold, and let
$N_s:s\in\cS$ index the closed orbit strata stabilized by a circle.
\begin{numlist}
\item For any Reeb component $U$ of $U_\Ds$, the signs of the primitive
generators $\e_s$ may be chosen uniquely so that for all $s\in\cS$,
$\ip{\mu,\e_s}\geq 0$ on $U$. Furthermore, for any $\alpha\in U$, the $\e_s$
with $\ip{\mu(\alpha),\e_s}=0$ are linearly independent elements of
$\gM_{p(\alpha)}^0\leq\ab[N]$.
\item If $U=U^{(\g,\lamc)}$ for a nondegenerate Levi pair $(\g,\lamc)$, then
the image of the horizontal momentum map $\mu^\lamc\colon N\to\As\sub\torh^*$
is the compact simple convex polytope $\Pol=\Pol_{\g,\lamc}$ in $\As$ defined by
the affine functions $L_s:=\Ll(\e_s)$.  Furthermore the fibres of $\mu^\lamc$ are
$\Ab[N]$-orbits, and if $\mu^\lamc(z)$ is in the interior of a face $\Fa$ then
the annihilator of the tangent space to $\Fa$ at $z$ is
$\ul(\stab_{\ab[N]}(z))\leq\tor \cong T^*_{\mu^\lamc(z)}\As$.

In particular, $\mu^\lamc$ induces a poset isomorphism of $\Cb_N$ with
$\Cb_\Pol$, sending $N_s$ to $\Fa_s$ for all $s\in\cS$ \textup(so both $\Cb_N$
and $\Cb_\Pol$ are simplicial sets\textup), and the restriction of $\mu^\lamc$
to an orbit stratum is a submersion over the interior of the corresponding
face.
\end{numlist}
\end{thm}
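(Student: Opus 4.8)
The plan is to combine the local symplectic-slice picture for the lifted hamiltonian $\Ab[N]$-action on $U_\Ds$ (Lemma~\ref{l:sslice}, refined by Remark~\ref{r:sslice}), the convexity result of Proposition~\ref{p:convex}, and the manifold-with-corners structure on $N/\Ab[N]$ from Corollary~\ref{c:manifoldCORNERS}. Throughout I work with a fixed Reeb component $U=U^{(\g,\lamc)}$, the contact form $\eta^\lamc\colon N\to U$, and the associated horizontal momentum map $\mu^\lamc\colon N\to\As$ characterised by~\eqref{eq:momentum}, so that $\Ll\transp\circ\mu^\lamc=\mu\circ\eta^\lamc$ by Remark~\ref{r:affine}.

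For part~(i), first I would show that $\ip{\mu,\e_s}$ does not change sign on the connected set $\eta^\lamc(N)$ (hence on $U$). Fix $s$ and let $N_s$ be the corresponding facet stratum, stabilised by the circle $H_s$ with primitive generators $\pm\e_s$. Over a point $z\in N_s$, by Lemma~\ref{l:toric-strata} the symplectic isotropy representation at $\alpha=\eta^\lamc_z$ is a sum of $2$-dimensional weight spaces whose weights span $\stab_{\ab[N]}(z)^*$; in particular the restriction of this representation to $\h_s=\R\e_s$ has a single weight $\be_s\in\h_s^*$, which we may take nonzero by effectiveness. The local model of Proposition~\ref{p:local-mu} then shows that near $\alpha$ the function $\ip{\mu-\mu(\alpha),\e_s}$ is (up to positive scale) $|w|^2$ in the normal coordinate, so it is $\geq 0$ there and vanishes exactly on $p^{-1}(N_s)\cap\cU$; choosing the sign of $\e_s$ so that $\ip{\be_s,\e_s}>0$ fixes the sign uniquely. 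To propagate non-negativity globally, apply Proposition~\ref{p:convex} with $v_1=\e_s$: the function $f=\eta^\lamc(K_{\e_s})$ has image an interval which is the convex hull of the values at the common critical manifolds; since on each critical manifold $f$ takes the value $\ip{\mu(\alpha),\e_s}$ for some $\alpha$ in a fixed-point set of $H_s$, and at least one such value is $0$ (at $N_s$) and none is negative by the local sign argument applied at every such stratum, $f\geq 0$ on $N$, i.e.\ $\ip{\mu,\e_s}\geq0$ on $U$. Linear independence of the $\e_s$ with $\ip{\mu(\alpha),\e_s}=0$: these $\e_s$ lie in $\stab_{\ab[N]}(p(\alpha))$ (the facet $N_s$ contains the orbit stratum through $p(\alpha)$, which is contained in $N^{H_s}$), and by Lemma~\ref{l:toric-strata} the weights $\be_s$ they determine on the symplectic isotropy representation are part of a basis of $\stab_{\ab[N]}(p(\alpha))^*$, hence the $\e_s$ are linearly independent; they lie in $\gM_{p(\alpha)}^0=\stab_{\ab[N]}(p(\alpha))$ since $\stab\leq\Theta$ as noted after Proposition~\ref{p:transverse}.

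For part~(ii), set $L_s:=\Ll(\e_s)\in\torh$ and $\Pol:=\{\xi\in\As\st L_s(\xi)\geq0\ \forall s\}$. The inclusion $\mu^\lamc(N)\sub\Pol$ is immediate from~(i): $L_s(\mu^\lamc(z))=\ip{\mu^\lamc(z),\Ll(\e_s)}=\eta^\lamc_z(K_{\e_s})=\ip{\mu(\eta^\lamc_z),\e_s}\geq0$. For the reverse inclusion and the combinatorial statements, I would argue that $\mu^\lamc$ descends to a continuous map $\bar\mu^\lamc\colon N/\Ab[N]\to\As$ which is a local homeomorphism onto its image respecting the manifold-with-corners structures: locally near $z$, Proposition~\ref{p:local-mu} (and the compatibility of the slice with the foliation, Remark~\ref{r:sslice}) identifies a neighbourhood of the orbit of $\alpha=\eta^\lamc_z$ in $U$, modulo the $\gM$-foliation, with a neighbourhood of $\mu^\lamc(z)$ in the product of a quadrant and $\stab_{\ab[N]}(z)^0/\gM_z$; the quadrant directions are cut out precisely by the $L_s$ with $s\in S_z:=\{s:z\in N_s\}$, because those $\e_s$ form a basis of $\stab_{\ab[N]}(z)$ as in~(i) and $\mu^\lamc$ is a submersion in the complementary directions. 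Hence $\bar\mu^\lamc$ is an open map, its image is open in $\Pol$; it is also closed since $N$ is compact and $\Pol$ connected, so $\bar\mu^\lamc$ is surjective onto $\Pol$. Injectivity, hence that the fibres of $\mu^\lamc$ are exactly $\Ab[N]$-orbits, follows from the connectedness statement~$(A)$ of Proposition~\ref{p:convex} together with the local description (the fibres in $\cU$ are single orbits): a standard argument shows a proper map onto a connected space which is locally injective with connected fibres is globally injective on orbit space. The poset isomorphism $\Cb_N\cong\Cb_\Pol$ then matches depth with valence, $N_s\mapsto\Fa_s$, so both posets are simplicial (every vertex of $\Pol$ is $m$-valent because the corresponding $\e_s$ are a basis of $\ab[N]/\g$ via $\ul$), $\Pol$ is simple, and the face statement about $\ul(\stab_{\ab[N]}(z))$ being the conormal of $\Fa$ at $\mu^\lamc(z)$ reads off from the local model, since $\gM_z\leq\stab_{\ab[N]}(z)$ maps to $0$ in $\tor$ and the quotient $\stab_{\ab[N]}(z)/\gM_z\hookrightarrow\tor$ is spanned by the $\ul(\e_s),s\in S_z$, which annihilate the tangent directions to $\Fa$. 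The submersion onto face interiors is the same local computation.

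The main obstacle I expect is the global injectivity of $\bar\mu^\lamc$ and the matching of the two manifold-with-corners structures: the local model gives control near each orbit, and Proposition~\ref{p:convex}$(A)$ gives connected fibres, but gluing these into a genuine homeomorphism $N/\Ab[N]\xrightarrow{\sim}\Pol$ of manifolds with corners—ruling out, e.g., that two different orbit strata of $N$ map to the same face of $\Pol$—requires care. The cleanest route is to show $\bar\mu^\lamc$ is a proper local homeomorphism onto $\Pol$ (open by the local model, closed by compactness) hence a covering, and then a degree/connectedness argument (or directly Proposition~\ref{p:convex}$(A)$, noting a point and all its $\mu^\lamc$-preimages lie in a single fibre which is connected and a single orbit locally, hence globally a single orbit) forces it to be a homeomorphism. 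Everything else is bookkeeping with the local model of Proposition~\ref{p:local-mu} and Lemma~\ref{l:toric-strata}.
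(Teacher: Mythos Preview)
Your approach uses the same three ingredients as the paper's proof (Lemma~\ref{l:toric-strata}, Proposition~\ref{p:local-mu}, Proposition~\ref{p:convex}), but there is a genuine gap in your argument for~(i) and an unnecessary detour in~(ii).

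For~(i), your global non-negativity argument is flawed. The critical manifolds of $f=\eta^\lamc(K_{\e_s})$ are \emph{not} fixed-point sets of $H_s$: by the proof of Lemma~\ref{l:Bott}, $z$ is critical for $f$ iff $K_{\e_s,z}\in\Rb\g_z$, i.e.\ iff $\e_s-w\in\stab_{\ab[N]}(z)$ for some $w\in\g$, and then $f(z)=\ip{w,\lamc}$, which need not be non-negative. So your ``local sign argument applied at every such stratum'' does not go through, because the local sign computation at $N_s$ used that $\e_s$ itself generates the stabilizer there. The paper fixes this more directly: apply Proposition~\ref{p:convex} to a full basis (not just $v_1=\e_s$) to conclude that $\mu^\lamc(N)$ is a compact convex polytope; the local model at a generic point of $N_s$ shows $\{L_s=0\}$ supports $\mu^\lamc(N)$ from one side, and convexity then forces $L_s\geq 0$ on the whole image. (Incidentally, your equality $\gM_{p(\alpha)}^0=\stab_{\ab[N]}(p(\alpha))$ is false in general---only the inclusion $\stab\leq\Theta=\gM^0$ holds---though your intended conclusion $\e_s\in\gM_{p(\alpha)}^0$ is correct.)

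For~(ii), you work harder than necessary. You do not need Remark~\ref{r:sslice} (compatibility of the slice with the $U_\Ds\to N$ foliation is used later, in the uniqueness discussion of~\S\ref{s:gr-im}, not here), and the covering/degree argument for injectivity of $\bar\mu^\lamc$ is avoidable. The paper simply observes that Proposition~\ref{p:convex}(A) gives connected fibres of $\mu^\lamc$, while Proposition~\ref{p:local-mu} says connected components of fibres of $\mu$ on $U_\Ds$ are $\Ab[N]$-orbits; since $\Ll\transp\circ\mu^\lamc=\mu|_{\eta^\lamc(N)}$ and $\eta^\lamc(N)$ is $\Ab[N]$-invariant, the fibres of $\mu^\lamc$ are single orbits. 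The identification of $\mu^\lamc(N)$ with $\{\xi:L_s(\xi)\geq0\}$ and the face/submersion statements then follow directly from the local description of Proposition~\ref{p:local-mu}, without any separate surjectivity or local-homeomorphism argument.
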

\begin{proof} First note that by Lemma~\ref{l:toric-strata}, the orbit strata
have connected stabilizers, so $N_s:s\in\cS$ are the maximal proper strata in
$\Cb_N$; furthermore $\ip{\mu(N_s),\e_s}=0$.

Proposition~\ref{p:local-mu} shows that the connected components of the fibres
of $\mu\colon U_\Ds\to \ab[N]^*$ are $\Ab[N]$ orbits, and if $\alpha\in N_s$
with stabilizer $\mathopen<\exp(t\e_s)\mathclose>$, then $\mu$ maps a
neighbourhood of $\Ab[N]\cdot\alpha$ to a half space bounded by
$\ip{\mu,\e_s}=0$.

Given a nondegenerate Levi pair $(\g,\lamc)$, Proposition~\ref{p:convex}
implies, by choosing a basis for a complement to the image of $\ker\lamc$ in
$\ab[N]$, that the image $\Pol$ of $\mu^\lamc$ is the convex hull of the
points $\mu^\lamc(z)$ where $\Rb\g_z=\Rb{}_z$, and that the fibres of
$\mu^\lamc$ are connected. Since $\Ll\transp\circ\mu^\lamc=\mu\circ\eta^\lamc$
is the restriction of $\mu$ to $\eta^\lamc(N)={\mu_\g}^{-1}(\lamc)$, the
fibres of $\mu^\lamc$ are $\Ab[N]$-orbits and we may choose the signs of
$\e_s$ so that $L_s:=\Ll(\e_s)\geq 0$ on $\Pol$.

Since $\mu$ maps the fibres of $U_\Ds\sub \Ds^0\to N$ linearly onto $\ab[N]^*$
(sending $\Ds^0_z$ to $\gM_z\leq\ab[N]^*$) these signs ensure
$\ip{\mu,\e_s}\geq 0$ on the Reeb component containing $\eta^\lamc(N)$,
proving (i).

The explicit local description of Proposition~\ref{p:local-mu} now shows that
$\Pol$ is the compact convex polytope on which $L_s\geq 0$ and that this
polytope is simple with the stated face and submersive properties, proving
(ii). The last part now follows.
\end{proof}

We may assume, if necessary, that the primitive generators $\e_s:s\in\cS$ span
$\ab[N]$: otherwise, there is a nontrivial subtorus $H$ of $\Ab[N]$ acting
freely on $N$ and transversely to $\Ds$. Thus $N$ is a principal $H$-bundle
over a toric contact manifold $(N/H, \Ds/H)$. Conversely, any principal torus
bundle with connection over a toric contact manifold is toric contact as well.
There is no reason to suppose that $\e_s:s\in\cS$ are linearly dependent, or
even distinct, but the case that they form a basis is of particular interest.

\begin{ex}\label{e:contactLabPol} Theorem~\ref{t:polytope} associates to
every Reeb component of a toric contact manifold, a simple convex
polytope
\begin{equation*}
\Pol:=\{\xi\in \As=(\afs\transp)^{-1}(1)\sub\h^*
\st \forall s\in\cS,\enskip L_s(\xi)\geq 0\}
\end{equation*}
which is \emph{labelled} in the sense that the affine functions $L_s\in\h$,
parametrized by the set $\cS$ of facets of $\Pol$, are given, whereas in
general they are only determined by $\Pol$ up to rescaling by a positive real
number. Let $\Z_\cS$ be the free abelian group generated by $\cS$, let
$\ab=\Z_\cS\otimes_\Z \R$ and $\C_\cS=\Z_\cS\otimes_\Z \C$ be the
corresponding free vector spaces over $\R$ and $\C$, and let
$\Ab=\ab/2\pi\Z_\cS$.  Denote the generators of $\Z_\cS\sub\ab\sub \C_\cS$ by
$e_s:s\in \cS$. The affine functions $L_s$ are encoded by the induced linear
map $\Ll\colon\ab\to \h$ with $\Ll(e_s)=L_s$.

Conversely, given a simple labelled polytope $(\Pol,\Ll)$, we now exhibit
many toric contact manifolds with a Reeb component inducing these data.  For
this, we first observe that $\Ab$ and $\Ab^\C= \C_\cS/2\pi\Z_\cS\cong
\C_\cS\mult$ act diagonally on $\C_\cS$, via $[\sum_s t_s e_s]\cdot (\sum_s
z_s e_s)=\sum_s\exp (i t_s) z_s e_s$. The action of $\Ab$ on $\C_\cS$ is
hamiltonian with respect to the standard symplectic form $\omega_\cS$ on $\cS$
and has a momentum map $\mm\colon\C_\cS\to \ab^*$ defined by
\begin{equation}\label{eq:std-momentum}
\ip{\mm(z),e_s}=\sigma_s(z)=\tfrac{1}{2}|z_s|^2,
\end{equation}
where $z_s\colon \C_\cS\to \C$ denote the standard (linear) complex
coordinates on $\C_\cS$. We define $\g = \ker (\d\circ\Ll\colon \ab\to\tor)$
and let $\iota_\g\colon \g \into \ab$ denote the inclusion. Since
$\Ll\circ\iota_\g$ takes values in $\ker\d$ there is a unique $\lamc\in\g^*$
with $\Ll\circ \iota_\g=\afs\circ\lamc$. We now define
\[
N:=(F\circ\mm)^{-1}(0) \quad \text{with}\quad
F=\iota_\g\transp-\lamc\colon \ab^*\to \g^*.
\]
Since $\Pol$ is simple, $\lamc$ is a regular value of $\iota_\g\transp \circ \mm$
which is a momentum map for the action of $\g$. Thus we get that $\Rb\g$ has
full rank everywhere on $N$. Denote by $\tau_\cS$, a primitive of $\omega_\cS$
such that $\tau_\cS (K_w)= \langle \mm, w \rangle$ for every $w\in\ab$ (for
e.g. take $\tau_\cS =\sum_{s\in \cS} d^c\mm_s$). By definition, $N
=\{z\in\C_\cS \,|\, \langle \mm_z, w\rangle = \langle \lamc, w\rangle \,
\forall w\in\g\}$. Hence, on $N$, for any $w\in\g$ we have $\tau_\cS(K_w) =
\langle \lamc, w\rangle$ and $\Rb{\ker \lamc} \sub \ker \tau_\cS$.  Observe
that the convexity of $\Pol$ implies that $\g \cap (\R_{>0})_\cS$ is not
empty. In particular, there exists $w\in\g$ such that $\tau_\cS(K_w)=
\sum_{s\in\cS} w_s|z_s|^2 \neq 0$ which ensures that the rank of $(\ker
\tau_\cS) \cap TN$ is $2m+\ell -1$.  Any complementary bundle $\Ds$ of
$\Rb{\ker \lamc}$ in $(\ker \tau_\cS) \cap TN$ is contact and admits
$(\g,\lamc)$ as a Levi pair. Indeed, for any local sections $X,Y \in
\Gamma(\Ds)$, where $\Ds\sub \ker\tau_\cS$ we have
\[
\omega(X,Y) = \d\tau_\cS(X,Y) = -\tau_\cS([X,Y]),
\]
whereas $\omega$ is nondegenerate on $TN/\Rb\g$ (by symplectic reduction) and
thus on $\Ds$.

If $\Ds$ is $\Ab$-invariant we get a contact toric manifold $(N,\Ds,\bK)$
together with a Levi pair $(\g,\lamc)$ for which the pull back of $\tau_\cS$
is the contact form $\eta^\lamc$. Recall that the momentum map $\mu$ of $\Ab$
at $\alpha\in \Ds^0$ is defined by $\langle \mu_\alpha, v\rangle =
\alpha(K_v)$ for all $v\in \ab^*$. Thus $\langle \Ll\transp\circ\mu^\lamc ,
v\rangle = \langle\mu\circ \tau_\cS , v\rangle = \sum_{s\in\cS} v_s|z_s|^2$
and $\Ll\transp(\im \mu^\lamc) = (\iota_\g\transp)^{-1}(\lambda)\cap\im \mm$
coincides with $\Ll\transp(\Pol)$. Such contact distributions exist---for
instance $\Ds=(\ker \tau_\cS)\cap \big(\bigcap_{w\in \ker \lamc}\ker
(d^c\langle \mm, w\rangle) \big)$ works.
\end{ex}

In this example, there is a close connection with the Delzant--Lerman--Tolman
construction of toric symplectic orbifolds. More generally, we have the
following.

\begin{rem}\label{r:stabG} If $(\g,\lamc)$ be a nondegenerate Levi pair
for $(N,\Ds,\bK)$ then there is a subtorus $G\leq \Ab[N]$ with Lie algebra
$\g$ if and only if the lattice of circle subgroups in $\ab[N]$ is mapped, via
$\ul$, to a lattice $\Lam$ in $\tor$ containing $\spns_\Z\{u_s=\ul(\e_s) \st
s\in \cS\}$. In this case, the quotient $(N/G, d\eta^\lamc\restr{\Ds},
\Ab[N]/G)$ is a toric symplectic orbifold, and $\mu^\lamc$ descends to a
momentum map on $N/G$. If $[z]\in N/G$ such that $\mu^{\lamc}(z)\in \Fa_S$ for
some $S\sub\cS$ in $\Cb_\Pol$, then the stabilizer of $z$ in
$\Ab[N]=\ab[N]/2\pi\Lam$ is connected and $\stab_{\ab[N]}(z)= \spns_\R \{
\e_s\st s\in S\}$.  An element $[v]=[\sum_{s\in S}t_s\e_s]$ of the stabilizer
of $z$ in $\Ab[N]$ is in $G$ iff $\sum_{s\in S} t_su_s \in\Lam$, and is zero
iff $t_s\in\Z$ for all $s\in S$.  Hence the orbifold structure group of $[z]$
\textup(the stabilizer of $z$ in $G$\textup) is isomorphic to
$(\Lam\cap\spns_\R\{u_s\st s\in S\})/\spns_\Z \{u_s\st s\in S\}$.

The Delzant--Lerman--Tolman correspondence then associates the symplectic
toric orbifold $N/G$ with $(\Pol,\Ll)$. However, if $\g$ is not the Lie
algebra of a closed subtorus of $\Ab[N]$, the image of $\Lam$, the lattice of
circle subgroups in $\ab[N]$, is not sent to a lattice in $\tor$ via $\ul$
and there is no preferred lattice in $\tor$. In particular, there is no
reason for the polytope $\Pol$ to be rational with respect to any lattice.
\end{rem}

\subsection{The grassmannian image}\label{s:gr-im}

Let $(N,\Ds,\bK)$ be a \textup(compact, connected\textup) codimension $\ell$
toric contact manifold of Reeb type, and let $N/\Ab[N]$.
Corollary~\ref{c:manifoldCORNERS} and Theorem~\ref{t:polytope} imply that
$N/\Ab[N]$ is a compact connected manifold with corners of dimension $m$,
diffeomorphic to a simple polytope $\Pol$. However,
Example~\ref{e:contactLabPol} shows that even the \emph{labelled} polytope
$(\Pol,\Ll)$ is insufficient data to recover $(N,\Ds,\bK)$.

To remedy this we consider instead the grassmannian momentum map
\begin{align*}
\gM \colon &N \to \Gr_\ell(\ab[N]^*)\\
           &z \mapsto \gM_z=\mu(\Ds^0_z)
\end{align*}
introduced in Definition~\ref{d:gmm}: recall that if $(N,\Ds,\bK)$ is locally
Reeb type, then $\mu$ maps the fibres of $\Ds^0$ to $\ell$-dimensional linear
subspaces of $\ab[N]^*$. If $(\g,\lamc)$ is a nondegenerate Levi pair, then by
Remark~\ref{r:affine}, $\gM$ takes values in $\Gr_\ell(\ab[N]^*)$ and
$\ip{\psi_\g\circ\gM,\lamc}=\Ll\transp\circ\mu^\lamc$. Since the latter is an
orbit map and $\gM$ is $\Ab[N]$-invariant, $\gM$ induces an embedding of
$N/\Ab[N]$ into $\Gr_\ell(\ab[N]^*)$. This is the \emph{grassmannian image}
that we will use to characterize $(N,\Ds,\bK)$.

\begin{defn}\label{d:polyhedral} Let $\ab[]$ be the Lie algebra of a
compact $(m+\ell)$-torus $\T= \ab[]/2\pi\Lam$. We say that a manifold with
corners $\Xi\sub\Gr_\ell(\ab[]^*)$, with facets $\Xi_s:s\in\cS$, is
\begin{bulletlist}
\item \emph{polyhedral}, if for all $s\in\cS$, there is a $1$-dimensional
subspace $E_s\leq\ab[]$ such that the facet $\Xi_s$ lies in the
subgrassmannian $\Gr_\ell(E_s^0)\sub \Gr_\ell(\ab[]^*)$;
\item \emph{labelled} by $e=(e_s)_{s\in\cS}$ if for all $s\in \cS$,
$e_s\in\ab[]\setminus 0$ and $\Xi_s\sub \Gr_\ell(e_s^0)$.
\end{bulletlist}
A labelling $e$ of $\Xi$ determines a (possibly empty) cone
\begin{equation}\label{defnCONExie}
\cC_e:= \{ x\in \ab[]^* \st \forall\,s\in \cS,\; \ip{x, e_s} \geq 0
\text{ and } S_x\in \Cb_\Xi\},
\end{equation}
where $S_x=\{s\in \cS\st \ip{x, e_s}=0\}$. A labelled polyhedral manifold with
corners $(\Xi,e)$ is
\begin{bulletlist}
\item \emph{rational} if $e_s \in \Lam$ for all $s\in \cS$;
\item \emph{Delzant} if for each $S\in \Cb_\Xi$, $(e_s)_{s\in S}$ is a
$\Z$-basis for $\Lam\cap\mbox{span}_{\R}\{ e_s\,|\, s\in S\}$.
\item \emph{of Reeb type} if there is an $\ell$-dimensional subspace
  $\iota_\g\colon\g\hookrightarrow \ab[]$ and $\lamc\in \g^*$ such that
  $\Xi\sub\Gr_\ell^\g(\ab[]^*)$ and $\ip{\psi_\g,\lamc}\colon
  \Gr_\ell^\g(\ab[]^*)\to \ab[]^*$ maps $\Xi$ bijectively to the convex
  polytope
\begin{equation}\label{defnPOLxie}
\Pol_{\g,\lamc}:=(\iota_\g\transp)^{-1}(\lamc)\cap \cC_e.
\end{equation}
\end{bulletlist}
\end{defn}
Note that the Delzant and Reeb type conditions for $(\g,\lamc)$ determine the
labelling $e$.

\begin{cor}\label{c:assMnldCorners}
The grassmannian image $\Xi$ of a compact toric contact manifold of Reeb type
$(N,\Ds,\bK)$ is polyhedral in $\Gr_\ell(\ab[N]^*)$, and any Reeb component
$U\sub U_\Ds$ induces a labelling $e$ such that $(\Xi,e)$ is Delzant of Reeb
type.
\end{cor}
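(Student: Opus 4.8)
The plan is to read off the three defining properties of Definition~\ref{d:polyhedral} in turn, most of the substance having already been provided by Theorem~\ref{t:polytope} and Lemma~\ref{l:toric-strata}; the only step with genuine content is the Delzant condition. Recall first from the discussion preceding the corollary that, since $(N,\Ds,\bK)$ is locally Reeb type, $\gM$ is $\Ab[N]$-invariant and induces an embedding $N/\Ab[N]\into\Gr_\ell(\ab[N]^*)$; together with Corollary~\ref{c:manifoldCORNERS} and Theorem~\ref{t:polytope}(ii) this presents $\Xi=\gM(N)$ as a compact connected manifold with corners and identifies the face posets $\Cb_N\cong\Cb_{N/\Ab[N]}\cong\Cb_\Xi\cong\Cb_\Pol$ compatibly with the facet index set $\cS$, so that $\Xi_s=\gM(N_s)$ where $N_s$ is the closed orbit stratum stabilised by the circle $H_s$ with primitive generators $\pm\e_s$. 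For polyhedrality, set $E_s:=\spns_\R\{\e_s\}\leq\ab[N]$: if $z\in N_s$ then $H_s\leq\Stab_{\Ab[N]}(z)$, so $\e_s\in\stab_{\ab[N]}(z)$, and since $\gM_z=\mu(\Ds^0_z)\leq\stab_{\ab[N]}(z)^0$ (end of \S\ref{s:ab-lca}) we get $\Xi_s\sub\Gr_\ell(E_s^0)$. Given a Reeb component $U$, Theorem~\ref{t:polytope}(i) fixes the signs of the $\e_s$ so that $\ip{\mu,\e_s}\geq 0$ on $U$; taking $e_s:=\e_s$ with these signs gives a labelling ($e_s\in E_s\setminus 0$, $\Xi_s\sub\Gr_\ell(e_s^0)$), which is rational because $\e_s\in\Lam$.

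The Delzant condition is the crux; it is the verification familiar from the classical toric correspondence~\cite{Delzant,LT}, now packaged so as to follow from Lemma~\ref{l:toric-strata}. Fix $S\in\Cb_\Xi=\Cb_N$ and a point $z$ interior to the corresponding orbit stratum, so $H:=\Stab_{\Ab[N]}(z)$ is a subtorus with $\h:=\stab_{\ab[N]}(z)$ of dimension $k=|S|$ (by Theorem~\ref{t:polytope}(ii)) and $\{s\st z\in N_s\}=S$. By Lemma~\ref{l:toric-strata} (with Lemma~\ref{l:sslice}), the normal bundle to $N^H$ at $z$ is the symplectic isotropy representation $\bigoplus_{i=1}^k W_i$, whose weights $\beta_1,\dots,\beta_k$ form a $\Z$-basis of the weight lattice of $\h^*$; let $\xi_1,\dots,\xi_k\in\Lam\cap\h$ be the dual $\Z$-basis. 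The subcircle $\exp(\R\xi_i)\leq H$ acts trivially on $W_j$ for $j\neq i$, hence fixes a codimension two closed orbit stratum through $z$, necessarily a facet $N_{s_i}$ with $s_i\in S$; and since $\exp(\R\xi_i)$ and $H_{s_i}$ are circles generated by primitive elements of $\Lam$, this forces $\e_{s_i}=\pm\xi_i$. As $i$ ranges over $\{1,\dots,k\}$ one exhausts the facets through $z$, so $S=\{s_1,\dots,s_k\}$ and $(\e_s)_{s\in S}=(\pm\xi_i)_i$ is a $\Z$-basis of $\Lam\cap\h=\Lam\cap\spns_\R\{\e_s\st s\in S\}$, which is precisely the Delzant condition for $(\Xi,e)$. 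I expect the one delicate point to be the identification of the abstract circle $\exp(\R\xi_i)\leq H$ with the facet stabiliser $H_{s_i}$, that is, the reconciliation of the combinatorics $\Cb_N$ with the weight data of the symplectic slice model; the rest is bookkeeping.

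For the Reeb type property, write $U=U^{(\g,\lamc)}$ for a nondegenerate Levi pair and use Stratagem~\ref{str:setup}, $(\g,\lamc)$ corresponding to the surjection $\Ll\colon\ab[N]\to\torh$. By Remark~\ref{r:affine}, $\Xi\sub\Gr_\ell^\g(\ab[N]^*)$ and $\ip{\psi_\g,\lamc}\circ\gM=\Ll\transp\circ\mu^\lamc$, while by Theorem~\ref{t:polytope}(ii) the map $\mu^\lamc$ has $\Ab[N]$-orbit fibres and image the compact simple polytope $\{\xi\in\As\st L_s(\xi)\geq 0\ \forall s\}$ with $L_s=\Ll(e_s)$. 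One checks that $\Ll\transp$ restricts to an affine isomorphism $\As\to(\iota_\g\transp)^{-1}(\lamc)$ (it is injective since $\Ll$ is onto, lands in $(\iota_\g\transp)^{-1}(\lamc)$ because $\iota_\g\transp\circ\Ll\transp=(\Ll\circ\iota_\g)\transp=\lamc\transp\circ\afs\transp$ by Stratagem~\ref{str:setup}, and is bijective by a dimension count), and under it $\ip{x,e_s}=L_s(\xi)$ and $S_x=S_\xi$, so it carries that polytope onto $\{x\in(\iota_\g\transp)^{-1}(\lamc)\st\ip{x,e_s}\geq 0\ \forall s\}=(\iota_\g\transp)^{-1}(\lamc)\cap\cC_e=\Pol_{\g,\lamc}$ of~\eqref{defnPOLxie} (the extra requirement $S_x\in\Cb_\Xi$ in~\eqref{defnCONExie} being automatic on this affine slice, by the definition of the combinatorics of a polytope). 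Hence $\ip{\psi_\g,\lamc}\circ\gM$ maps $N$ onto $\Pol_{\g,\lamc}$ with $\Ab[N]$-orbit fibres, so the induced continuous bijection $\Xi\to\Pol_{\g,\lamc}$ of compact Hausdorff spaces is a homeomorphism, showing that $(\Xi,e)$ is Delzant of Reeb type and completing the proof.
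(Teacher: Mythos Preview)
Your proof is correct, and it follows the approach the paper intends: in the paper this corollary is stated without proof, as an immediate consequence of the preceding results (Corollary~\ref{c:manifoldCORNERS}, Lemma~\ref{l:toric-strata}, Theorem~\ref{t:polytope}, and Remark~\ref{r:affine}), and you have simply written out those implicit deductions in detail. The one step you flag as delicate---matching the dual-basis circles $\exp(\R\xi_i)\leq H$ with the facet stabilisers $H_{s_i}$---is handled correctly via the local slice model, and your verification that $\Ll\transp$ carries $\As$ isomorphically onto $(\iota_\g\transp)^{-1}(\lamc)$ and the polytope $\{L_s\geq 0\}$ onto $\Pol_{\g,\lamc}$ is the right way to unpack the Reeb type condition.
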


We would like to show that the grassmannian image classifies toric contact
manifolds of Reeb type, following a well-known line of argument~\cite{KL,LT}
in the symplectic and codimension one case, which builds on~\cite{HS}.
However, in higher codimension, local invariants obstruct a straightforward
description of $\con(N,\Ds)$, and so the method yields less explicit
conclusions. Let $\Con^\T_0(\Ds)$ be the sheaf over $N/\Ab[N]$ of
$\Ab[N]$-equivariant contactomorphisms of $N$ preserving the orbits (i.e.,
inducing the identity on $N/\Ab[N]$) and let $\con^\T(\Ds)$ be the sheaf over
$N/\Ab[N]$ of $\Ab[N]$-invariant contact vector fields on $(N,\Ds)$.

Note that if a contact vector field $X$ is $\Ab[N]$-invariant, then so is the
hamiltonian $f_X$ for its lift $\tilde X$ to $U_{\Ds}$, i.e.,
$\Omega^\Ds(\tilde X,\tilde K_v)=-\d f_X(\tilde K_v)=0$ for any $v\in\ab[N]$.
Since the latter span a lagrangian subspace on a dense open set, $\tilde
X_\alpha = \tilde K_{\tilde v(\alpha),\alpha}$ for some $\ab[N]$-valued
function of the form $\tilde v= p^*v$ (since $\tilde X$ is
$\ab[N]$-invariant). Hence for any $z\in N$, $X_z=K_{v(z),z}$ is tangent to
the $\Ab[N]$-orbit through $z$.

\begin{lemma} There is an exact sequence of sheaves over $N/\Ab[N]$:
\begin{equation}\label{eq:sess}
0\to\underline{2\pi\Lam}\to \con^\T(\Ds)\to \Con^\T_0(\Ds)\to 0,
\end{equation}
where the first nontrivial map is the inclusion of the locally constant sheaf
associated to $2\pi\Lam\sub\ab[N]$ into $\con^\T(\Ds)$ and the second is the
time $1$ flow of a contact vector field.
\end{lemma}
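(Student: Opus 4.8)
The plan is to establish exactness of~\eqref{eq:sess} stalk by stalk over $N/\Ab[N]$, using the local models from the symplectic slice theorem (Lemma~\ref{l:sslice}, as refined in Remark~\ref{r:sslice}) together with the preceding observation that any $\Ab[N]$-invariant contact vector field $X$ is pointwise tangent to the $\Ab[N]$-orbits, so $X_z=K_{v(z),z}$ for an $\ab[N]$-valued function $v$ which, by Proposition~\ref{p:transverse}, is constant along each orbit and hence descends to $N/\Ab[N]$. Since $\con^\T(\Ds)$ and $\Con^\T_0(\Ds)$ are sheaves of \emph{$\Ab[N]$-equivariant} objects preserving orbits, it is enough to understand the germs at a point $[z]$ of the quotient, and by Corollary~\ref{c:manifoldCORNERS} a neighbourhood of $[z]$ is modelled on a product $[0,\infty)^k\times\R^{m-k}$ coming from the symplectic slice $\cY=\Ab[N]\times_{H}(W^\alpha\oplus\h_\alpha^0)$ at $\alpha\in p^{-1}(z)$, where $H=\Stab_{\Ab[N]}(z)$ has Lie algebra $\h_\alpha=\stab_{\ab[N]}(z)$.

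First I would treat the injectivity of $\underline{2\pi\Lam}\hookrightarrow\con^\T(\Ds)$ and its image. The inclusion sends a lattice vector $w\in 2\pi\Lam$ to the fundamental vector field $K_w$, which is $\Ab[N]$-invariant (the torus is abelian) and contact; it is the zero section precisely when $w$ acts trivially, and since $\Ab[N]$ acts effectively and $N$ is connected this forces $w=0$. Moreover the time-$1$ flow of $K_w$ for $w\in 2\pi\Lam$ is $\exp(w)=\mathit{id}$ in $\Ab[N]$, so the composite to $\Con^\T_0(\Ds)$ is trivial; this gives exactness at $\con^\T(\Ds)$ in one direction. For the reverse inclusion, suppose a germ of an invariant contact vector field $X$ has trivial time-$1$ flow. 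Writing $X_z=K_{v(z),z}$ as above, the lift $\tilde X_\alpha=\tilde K_{v(\alpha),\alpha}$ on $U_\Ds$ has hamiltonian $f_X(\alpha)=\ip{\mu(\alpha),v(\alpha)}$; invariance of $f_X$ together with the fact that $\mu$ is a submersion onto the interior of the relevant cone (Proposition~\ref{p:local-mu}, Theorem~\ref{t:polytope}) forces $v$ to be \emph{locally constant}, hence constant on the connected local piece, say $v\equiv w\in\ab[N]$. Then $X=K_w$ and its time-$1$ flow is $\exp(w)$; triviality of the flow on the (faithful, effective) action on the open dense orbit stratum yields $w\in 2\pi\Lam$.

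Next I would prove surjectivity of $\con^\T(\Ds)\to\Con^\T_0(\Ds)$, which is the heart of the matter. Given a germ of an $\Ab[N]$-equivariant contactomorphism $\Phi$ inducing the identity on $N/\Ab[N]$, one must produce an invariant contact vector field flowing to it in unit time. Over the part of a slice neighbourhood lying in the principal orbit stratum, $\Phi$ is translation by a smooth $\Ab[N]=\ab[N]/2\pi\Lam$-valued function $\bar v$ which, again because it preserves orbits and the momentum map submersion, must be \emph{locally constant} modulo $2\pi\Lam$; shrinking to a connected (hence simply connected, by the product-with-quadrant model) neighbourhood, $\bar v$ lifts to a constant $w\in\ab[N]$, and $\Phi=\exp(w)$ there. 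The integral vector field $K_w$ is then the desired germ, provided it extends over the lower strata (where $\Stab_{\Ab[N]}$ jumps): but on the slice model~\eqref{eq:slice-model} the fundamental vector fields $K_v$, $v\in\ab[N]$, are globally defined smooth vector fields on $\cY$, so $K_w$ extends automatically and is contact by Proposition~\ref{p:con}. Compatibility of the slice with the foliation over $M$, ensured by Remark~\ref{r:sslice}, guarantees that $K_w$ restricts to a contact vector field for $\Ds$ itself and not merely for the symplectic form on $U_\Ds$.

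The main obstacle I anticipate is the surjectivity step at points of positive depth, where one must check that the locally constant description of an orbit-preserving equivariant contactomorphism on the open dense stratum really does extend smoothly across the boundary strata of the slice. Concretely, one needs that a germ of such a $\Phi$, a priori only controlled on $p^{-1}$ of the principal stratum, is genuinely the restriction of $\exp(w)$ on a full slice neighbourhood; this requires the rigidity of the momentum-map/orbit structure near the boundary (Proposition~\ref{p:local-mu} and Lemma~\ref{l:toric-strata}, which gives that stabilizers are subtori with $2$-dimensional weight spaces forming a basis of $\stab_{\ab[N]}(z)^*$) to rule out any additional ``twisting'' along faces. Once that rigidity is in hand, the constant $w$ is unambiguous, the sequence is exact at $\Con^\T_0(\Ds)$, and, combined with the earlier steps, the short exact sequence~\eqref{eq:sess} follows.
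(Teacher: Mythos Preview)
Your proposal contains a genuine gap: the repeated claim that the function $v$ (or $\bar v$) is \emph{locally constant} is false, and this undermines both the middle exactness step and, more seriously, the surjectivity argument. An $\Ab[N]$-invariant contact vector field $X$ with $X_z=K_{v(z),z}$ need not have $v$ constant; indeed the paper later shows (in codimension one, or more generally for $\ell$-contact manifolds) that $\con^\T(\Ds)$ is isomorphic to the sheaf of \emph{all} smooth functions on $\Xi$, far larger than the constant sheaf. Your justification---``invariance of $f_X$ together with the fact that $\mu$ is a submersion''---does not work: the hamiltonian $f_X(\alpha)=\ip{\mu(\alpha),v(p(\alpha))}$ is automatically $\Ab[N]$-invariant for any invariant $v$, and $\mu$ being an orbit map places no further constraint. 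Likewise, an orbit-preserving equivariant contactomorphism $\Phi$ is in general \emph{not} of the form $z\mapsto\exp(w)\cdot z$ for constant $w$; it is $z\mapsto\exp(v(z))\cdot z$ for a genuine function $v$, and the whole difficulty is to show the associated vector field is contact.

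The paper handles surjectivity quite differently. Given $\phi\in\Con^\T_0(\Ds)$ over small $U$, it lifts $\phi$ to $\tilde\phi$ on $p^{-1}(U)\sub U_\Ds$, observes that $\tilde\phi$ preserves $\tau^\Ds$ and hence $\mu$, and then invokes \cite[Theorem~3.1]{HS} to write $\tilde\phi(\alpha)=\exp(\tilde v(\alpha))\cdot\alpha$ for a smooth invariant $\tilde v=p^*v$. This produces a candidate vector field $\tilde X_\alpha=\tilde K_{v(p(\alpha)),\alpha}$ with time-$1$ flow $\tilde\phi$, but one must still check that $\tilde X$ preserves $\tau^\Ds$ (equivalently, that $X$ is contact). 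The paper does this by an averaging trick: $\cL_{\tilde X}\tau^\Ds$ is basic for the $\Ab[N]$-action, the flow $\psi_t$ of $\tilde X$ acts trivially on basic forms, and the identity
\[
0=\psi_1^*\tau^\Ds-\psi_0^*\tau^\Ds=\int_0^1\psi_t^*(\cL_{\tilde X}\tau^\Ds)\,\d t=\cL_{\tilde X}\tau^\Ds
\]
then forces $\cL_{\tilde X}\tau^\Ds=0$. This is the missing idea in your proposal.
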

\begin{proof} Exactness at the first two steps is straightforward,
following~\cite{KL,LT}; surjectivity of the second map is less so.  However,
given a contactomorphism $\phi$ in $\Con^\T_0(U,\Ds)$ over a
$\Ab[N]$-invariant open subset $U$ of $N$, the lift $\tilde \phi$ to
$p^{-1}(U)$ preserves $\tau^\Ds$, hence the momentum map $\mu$, hence the
$\Ab[N]$-orbits.  Thus~\cite[Theorem~3.1]{HS} shows that for $U$ sufficiently
small, $\tilde\phi(\alpha)=\exp(\tilde v(\alpha))\cdot\alpha$ for a smooth
$\Ab[N]$-invariant function $\tilde v\colon p^{-1}(U)\to \ab[N]$.  Since
$p(\tilde\phi(\alpha))=\phi(p(\alpha))$, $\exp(\tilde v(\alpha))\cdot
p(\alpha)=\phi(p(\alpha))$ and so we may assume $\tilde v = p^*(v)$ for a
smooth $\Ab[N]$-invariant function $v\colon U \to\ab[N]$. Thus
$\tilde\phi=\psi_1$, where $\psi_t$ is the flow of the projectable vector
field $\tilde X_\alpha=\tilde K_{v(p(\alpha)),\alpha}$ (this flow exists
because $\tilde X$ is tangent to the compact $\Ab[N]$ orbits).

We now show that $\tilde X$ is the lift to $p^{-1}(U)$ of a infinitesimal
contactomorphism. First note that since the $\Ab[N]$-orbits are isotropic,
$\Omega^\Ds(\tilde X, K_w)=0$ for all $w\in\ab[N]$ and hence $\cL_{\tilde
  X}\tau^\Ds = \iota_{\tilde X} \Omega^\Ds + \d \iota_{\tilde X}\tau^\Ds$ is a
basic $1$-form with respect to the $\Ab[N]$ action.  Since $\psi_1=\tilde\psi$
and $\psi_0=\Id$ preserve $\tau^\Ds$,
\begin{equation*}
0=\psi_1^*\tau^\Ds-\psi_0^*\tau^\Ds
=\int_0^1 \frac{\d}{\d t} \psi_t^*\tau^\Ds \, \d t
=\int_0^1 \psi_t^* (\cL_{\tilde X} \tau^\Ds) \, \d t = \int_0^1
\cL_{\tilde X} \tau^\Ds \,\d t = \cL_{\tilde X} \tau^\Ds,
\end{equation*}
where in the penultimate step, we have used that $\psi_t$ induces the identity
on the $p^{-1}(U)/\Ab[N]$, and so preserves basic $1$-forms.
\end{proof}

Identifying $N/\Ab[N]$ with $\Xi$ via the grassmannian momentum map, we may
view~\eqref{eq:sess} as an exact sequence of sheaves on $\Xi$.

\begin{prop} Let $(N,\Ds,\bK)$ be a toric contact manifold of Reeb type with
grassmannian image $\Xi$. Then toric contact manifolds of Reeb type with the
same grassmannian image are parametrized up to isomorphism by
$H^1(\Xi,\con^\T(\Ds))$.
\end{prop}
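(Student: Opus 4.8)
The plan is to run the sheaf-theoretic local-to-global argument of~\cite{HS}, as used for toric symplectic manifolds and orbifolds in~\cite{KL,LT} and for codimension one toric contact manifolds in~\cite{Lerman-toric}, now over the quotient $N/\Ab[N]$, which Corollary~\ref{c:manifoldCORNERS} and Theorem~\ref{t:polytope} identify, via the grassmannian momentum map, with $\Xi$ --- a simple convex polytope $\Pol$, hence contractible. Throughout we regard $\con^\T(\Ds)$ and $\Con^\T_0(\Ds)$ as sheaves on $\Xi$.

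\emph{Local uniqueness.} The first step would be to prove that any two compact toric contact manifolds of Reeb type with grassmannian image $\Xi$ are locally isomorphic over $\Xi$: every point of $\Xi$ has a neighbourhood $U$ over which the two, with their contact distributions and torus actions, are intertwined by an $\Ab[N]$-equivariant contactomorphism covering $\mathrm{id}_U$. This is the role of the local models of \S\ref{s:ss-torus}. At an orbit over $z\in N$ lying over the relative interior of a face $\Fa_S$, Lemma~\ref{l:toric-strata} and Theorem~\ref{t:polytope} identify the symplectic isotropy representation as the sum of the $2$-dimensional weight spaces with weights $\e_s$, $s\in S$, and the Delzant condition (Corollary~\ref{c:assMnldCorners}) pins these down from $\Xi$ near $\gM(z)$; Lemma~\ref{l:sslice}, refined as in Remark~\ref{r:sslice} so that the symplectic slice is compatible with the fibration $U_\Ds\to N$, then yields a model for a neighbourhood of the orbit in $U_\Ds$, and hence --- passing to the base $N$, on which $\Ds$, its degeneracy variety, and the action are recovered from $U_\Ds$ together with this fibration and the Levi pair --- a model for a neighbourhood of the orbit in $N$ depending only on $\Xi$. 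Moreover the automorphism sheaves are abelian ($\con^\T(\Ds)$ because $\con(N,\Ds)$ is a vector space and such vertical vector fields with basic coefficients commute; $\Con^\T_0(\Ds)$ because, by the Lemma preceding this Proposition, composition corresponds to addition of the generating $\ab[N]$-valued functions), so a local equivariant contactomorphism between two realizations induces by conjugation a \emph{canonical} identification of their automorphism sheaves --- canonical precisely because conjugation in an abelian group is trivial. Thus, given that one realization exists, $\Con^\T_0(\Ds)$ and $\con^\T(\Ds)$, as sheaves on $\Xi$, depend only on $\Xi$.

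\emph{The classifying class.} Fixing $(N,\Ds,\bK)$ as basepoint, and given another toric contact manifold of Reeb type $(N',\Ds',\bK')$ with grassmannian image $\Xi$, one would choose an open cover $\{U_i\}$ of $\Xi$ and, over each $U_i$, an $\Ab[N]$-equivariant contactomorphism $\phi_i$ from $N|_{U_i}$ to $N'|_{U_i}$ covering $\mathrm{id}_{U_i}$. Then $\phi_{ij}:=\phi_i^{-1}\circ\phi_j\in\Con^\T_0(\Ds)(U_{ij})$ (transported via the canonical identification above) satisfies the cocycle condition, and since $\Con^\T_0(\Ds)$ is abelian the class $c(N')\in H^1(\Xi,\Con^\T_0(\Ds))$ is independent of the choices, with $c(N)=0$ and $c(N')=c(N'')$ if and only if $N'$ and $N''$ are $\Ab[N]$-equivariantly contactomorphic over $\Xi$. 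Conversely, a \v{C}ech cocycle with values in $\Con^\T_0(\Ds)$ glues the local models of the previous step into a manifold carrying an $\Ab[N]$-action and invariant distribution; using compactness of $\Xi$ and of $\Ab[N]$ one checks the result is a compact toric contact manifold of Reeb type whose grassmannian image is $\Xi$ (this gluing/realizability step is the same one underlying \S\ref{s:exists}; cf.~\cite{HS}). Hence $N'\mapsto c(N')$ is a bijection from isomorphism classes to $H^1(\Xi,\Con^\T_0(\Ds))$.

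\emph{Passage to $\con^\T(\Ds)$, and the main obstacle.} Finally, one takes the long exact cohomology sequence of~\eqref{eq:sess}. Since $\Xi$ is homeomorphic to the simple convex polytope $\Pol$ it is contractible, so $H^k(\Xi,\underline{2\pi\Lam})=0$ for all $k\geq1$; in particular $H^1(\Xi,\underline{2\pi\Lam})=H^2(\Xi,\underline{2\pi\Lam})=0$, and the map $H^1(\Xi,\con^\T(\Ds))\to H^1(\Xi,\Con^\T_0(\Ds))$ induced by the surjection in~\eqref{eq:sess} is an isomorphism. Composing with the bijection of the previous step gives the asserted parametrization. The hard part is the interface between the first two steps: arranging the local isomorphisms to respect \emph{simultaneously} the torus action and the fibration $U_\Ds\to N$ --- which is exactly why Remark~\ref{r:sslice} is needed --- and, for surjectivity, verifying that the glued object is Hausdorff, compact, and carries the prescribed degeneracy variety and Reeb type (its local contact invariants being, by construction, those of the basepoint $N$). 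The remaining, purely cohomological part is formal once contractibility of $\Xi$ is in hand.
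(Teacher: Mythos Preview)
Your proposal is correct and follows essentially the same approach as the paper: local uniqueness via the fibre-compatible symplectic slices of Remark~\ref{r:sslice}, the standard \cite{HS,KL,LT} cocycle argument giving a bijection with $H^1(\Xi,\Con^\T_0(\Ds))$, and then the long exact sequence of~\eqref{eq:sess} together with contractibility of $\Xi$ to pass to $H^1(\Xi,\con^\T(\Ds))$. Your write-up is more explicit than the paper's (which compresses the middle step into ``a standard argument''), in particular in spelling out why abelianness of the automorphism sheaves makes the identification canonical and in flagging the Hausdorff/compactness checks in the gluing step, but the architecture is the same.
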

\begin{proof} Suppose $N'$ has the same grassmannian image. This induces a
diffeomorphism between $N/\Ab[]$ and $N'/\Ab[]$. It follows from
Remark~\ref{r:sslice} that $N$ and $N'$ are locally isomorphic by a fibre
preserving contactomorphism. Thus a standard argument~\cite{HS,KL,LT} shows
that $N'$ determines and is determined up to isomorphism by an element of
$H^1(\Xi,\Con^\T_0(\Ds))$. However, since $N$ has Reeb type, $\Xi$ is
diffeomorphic to a simple convex polytope, and hence is contractible, so
$H^i(\Xi,\underline{2\pi\Lam})=0$ for $i\geq 1$. Thus by the long exact
sequence associated to~\eqref{eq:sess}, $H^1(\Xi,\Con^\T_0(\Ds))\cong
H^1(\Xi,\con^\T(\Ds))$.
\end{proof}

In order to understand the sheaf $\con^\T(\Ds)$, it is convenient to introduce
a transversal subalgebra $\g\leq \ab[N]$ which is the Lie algebra of a
subtorus $G\leq \Ab[N]$ and has $\cC_\g\sub\g^*$ nonempty. Thus $N$ is a
principal $G$-bundle over a compact orbifold $M=N/G$, and $\Ds$ defines a
$\Ab[M]:=\Ab[N]/G$-invariant principal $G$-connection $\eta$ on $N\to M$,
cf.~Example~\ref{con-pb}.

\begin{defn} A \emph{toric $\ell$-symplectic manifold} (or \emph{orbifold}) is
a $2m$-manifold (or orbifold) $M$ with an effective action of an $m$-torus
$\Ab[M]$ together with a $\Ab[M]$-invariant principal $G$-bundle $\pi\colon
N\to M$ with connection $\eta$, for $G$ an $\ell$-torus, such that for some
$\lamc\in\g^*$, $\ip{\omega,\lamc}$ is nondegenerate, where $\omega\in
\Omega^2(M,\g)$ is the curvature of $\eta$.
\end{defn}
In this setting, $(N,\Ds=\ker\eta)$ is a toric contact manifold (or orbifold)
of dimension $\ell$ under an extension $\Ab[N]$ of $\Ab[M]$ by $G$, the
induced action of $\g$ is transversal, and $\ip{\Omega,\lamc}$ is
nondegenerate (hence a symplectic form on $M$) iff $\lamc\in \cC_\g$, i.e.,
$(\g,\lamc)$ is a Levi pair.

\begin{prop} Let $M$ be a toric $\ell$-symplectic manifold with associated
toric contact manifold $(N,\Ds,\Ab[N])$. For $Z\in\con^\T(\Ds)$, viewed as a
sheaf on $N$, we may write $\eta(Z)=\pi^* f_Z$ and this defines an
isomorphism between $\con^\T(\Ds)$ and the sheaf of $\Ab[M]$-invariant
$\g$-valued functions $f$ on $M$ such that $\d f = -\iota_X \omega$ for some
vector field $X$.
\end{prop}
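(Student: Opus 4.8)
The plan is to exhibit the isomorphism concretely. Since $G$ is abelian the bracket term in Lemma~\ref{l:eta} drops out, so $\d\eta=\pi^*\omega$ with $\omega\in\Omega^2(M,\g)$ the curvature; moreover $\cL_{K_v}\eta=0$ for every $v\in\ab[N]$ (Lemma~\ref{l:eta} for $v\in\g$, and $\Ab[N]$-invariance of $\eta$ in general), and $TN=\Ds\dsum\Rb\g$ with $\Rb\g=\ker\pi_*$ the vertical bundle. Together with the nondegeneracy of $\ip{\omega,\lamc}$ on $M$, these are the only facts I expect to use; everything else is manipulation of connection identities.

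First I would check that $f_Z$ is well defined and satisfies the stated equation. For $Z\in\con^\T(\Ds)$ and $v\in\ab[N]$, $\cL_{K_v}(\eta(Z))=(\cL_{K_v}\eta)(Z)+\eta([K_v,Z])=0$, so $\eta(Z)$ is $\Ab[N]$-invariant; being in particular $G$-invariant it equals $\pi^*f_Z$ for a unique $\Ab[M]$-invariant $\g$-valued function $f_Z$ on $M$, and $X_Z:=\pi_*Z$ is a well-defined $\Ab[M]$-invariant vector field on $M$ because $Z$ is $G$-invariant. Next, $\cL_Z\eta$ vanishes on $\Ds$ because $Z$ preserves $\Ds=\ker\eta$, and vanishes on $\Rb\g$ because $(\cL_Z\eta)(K_v)=Z(\eta(K_v))-\eta([Z,K_v])=Z(v)-0=0$ for $v\in\g$; by transversality $\cL_Z\eta=0$ on $TN$. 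Cartan's formula $0=\iota_Z\d\eta+\d\iota_Z\eta=\pi^*(\iota_{X_Z}\omega)+\d(\pi^*f_Z)$ then gives $\d f_Z=-\iota_{X_Z}\omega$, so $Z\mapsto f_Z$ really lands in the target sheaf.

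To finish I would prove bijectivity. For injectivity, $f_Z=0$ forces $\eta(Z)=0$, i.e.\ $Z\in\Ds$, and $\iota_{X_Z}\omega=-\d f_Z=0$ forces $X_Z=0$ by nondegeneracy of $\ip{\omega,\lamc}$, i.e.\ $Z\in\Rb\g$; since $\Ds\cap\Rb\g=0$ we get $Z=0$. For surjectivity, start from an $\Ab[M]$-invariant $f$ with $\d f=-\iota_X\omega$; the witnessing $X$ is unique (nondegeneracy again) hence $\Ab[M]$-invariant, and I would set $Z:=\widehat X+K_{\pi^*f}$, where $\widehat X$ is the $\eta$-horizontal lift of $X$ and $K_{\pi^*f}$ is the vertical field $z\mapsto K_{\pi^*f(z),z}$, so that $\eta(Z)=\pi^*f$ and $\pi_*Z=X$. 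Then $\cL_Z\eta=\iota_Z\pi^*\omega+\d(\pi^*f)=\pi^*(\iota_X\omega+\d f)=0$ shows $Z\in\con(N,\Ds)$; and $\eta([K_v,Z])=\cL_{K_v}(\pi^*f)-(\cL_{K_v}\eta)(Z)=0$ together with $\pi_*[K_v,Z]=[\pi_*K_v,X]=0$ (as $X$ is $\Ab[M]$-invariant) shows $[K_v,Z]\in\Ds\cap\Rb\g=0$, so $Z\in\con^\T(\Ds)$ and $f_Z=f$. Since every construction is local over $M$ and commutes with restriction, $Z\mapsto f_Z$ is an isomorphism of sheaves.

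I do not anticipate a genuine obstacle here: the argument is bookkeeping with the curvature identity $\d\eta=\pi^*\omega$ and the splitting $TN=\Ds\dsum\Rb\g$. The one place where the hypotheses truly bite is the nondegeneracy of $\ip{\omega,\lamc}$, which makes the witnessing field $X$ unique—so the inverse map is single-valued—and which also drives injectivity. The minor point to be careful about is that the inverse construction $\widehat X+K_{\pi^*f}$ is canonical and therefore glues over $M$; it is, since both the $\eta$-horizontal lift and the assignment $h\mapsto K_h$ from $\g$-valued functions to vertical fields are choice-free.
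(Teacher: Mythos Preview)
Your proof is correct and follows essentially the same approach as the paper's: identify $\eta(Z)$ as the pullback of a $\Ab[M]$-invariant function, use Cartan's formula with $\d\eta=\pi^*\omega$ to obtain $\d f_Z=-\iota_{X_Z}\omega$, and invert via $Z=\widehat X+K_{\pi^*f}$. Your argument is slightly more explicit in two places---you unpack why $\cL_Z\eta=0$ (checking $\Ds$ and $\Rb\g$ separately rather than asserting it), and you verify $\Ab[N]$-invariance of the inverse construction directly---while the paper's injectivity argument instead invokes nondegeneracy of $\d\eta$ on $\ker\eta$; but these are differences of presentation, not of method.
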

\begin{proof} For $Z\in\con^\T(\Ds)$, $\eta(Z)$ is $G$-invariant, hence of
the form $\pi^*f_Z$ for some $\g$-valued function $f_Z$ on $M$, which is
$\Ab[M]$-invariant because $\eta(Z)$ is $\Ab[N]$-invariant. However, since $Z$
is contact $0=\cL_Z\eta=\iota_Z\d\eta + \pi^*\d f_Z$, so if $f_Z=0$ on an open
subset of $M$, $\iota_Z\d\eta=0$ on its inverse image in $N$, and hence $Z=0$
(since $\d\eta$ is nondegenerate on the kernel of $\eta$). Since
$\iota_Z\d\eta$ is $G$-invariant, and vanishes on generators of the
$G$-action, it is basic. However $\d\eta=\pi^*\omega$, so $Z$ is projectable,
to a vector field $X$ with $\iota_X\omega=-\d f_Z$. Conversely, for any
$\Ab[M]$-invariant $\g$-valued functions $f$ with $\d f = -\iota_X \omega$ for
some vector field $X$, we may take $Z= X^H+\ip{\pi^* f,\bK^\g}$, where $X^H$
is the horizontal lift of $X$ to $\Ds$ and $\bK^\g\colon\g\to\con^\T(\Ds)$
generate the infinitesimal action of $\g$ on $N$. Then $Z$ is
$\Ab[N]$-invariant with $\eta(Z)=\pi^*f$, and $\cL_Z\eta=\iota_Z\d\eta +
\pi^*\d f_Z=\pi^*(\iota_X\omega+\d f) = 0$, so $Z$ is contact.
\end{proof}

Note that the vector field $X$ here is $\ell$-hamiltonian, in the sense that
it has a hamiltonian with respect to every (nondegenerate) component of
$\omega$. When $\ell=1$, or more generally, when the Levi form has rank one
image (the $\ell$-contact manifolds
of~\cite{Bolle,Dragnev}---cf.~Examples~\ref{ex:contact} (iii)), this places no
condition on $f$, so $\con^\T(\Ds)$ is isomorphic to the sheaf of functions on
$M/\Ab[M]=\Xi$, which is a fine sheaf, so $H^1(\Xi,\con^\T(\Ds))=0$.

We also have the following generalization.

\begin{prop}\label{p:sheaf-prod} If $N$ is a product of codimension one contact
  manifolds of Reeb type, then $H^1(\Xi,\con^\T(\Ds))=0$.
\end{prop}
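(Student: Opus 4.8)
The plan is to identify $\con^\T(\Ds)$, as a sheaf on $\Xi$, with an external direct sum of pullbacks of the sheaves $\con^\T(\Ds_i)$ attached to the codimension one factors, and then to deduce the vanishing from the codimension one case together with a Leray argument. Write $N=N_1\times\cdots\times N_\ell$, $\Ds=\Ds_1\dsum\cdots\dsum\Ds_\ell$, with each $(N_i,\Ds_i)$ a compact codimension one toric contact manifold of Reeb type under a torus $\Ab[N_i]$, so $\Ab[N]=\prod_i\Ab[N_i]$ and $\Xi=N/\Ab[N]=\prod_i\Xi_i$, where each $\Xi_i=N_i/\Ab[N_i]$ is diffeomorphic to a simple polytope, hence compact and contractible. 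It is convenient to choose a transversal subalgebra adapted to the product: in codimension one the nondegeneracy locus $U_{\Ds_i}$ is all of $\Ds_i^0$ minus its zero section, so every direction transverse to $\Ds_i$ gives a nondegenerate Levi pair; since transversality is an open condition I may take $\g_i=\R\e_i\leq\ab[N_i]$ with $\e_i$ a primitive lattice vector transverse to $\Ds_i$, and then $\g:=\bigoplus_i\g_i$ is the Lie algebra of a subtorus $G=\prod_iG_i\leq\Ab[N]$, transverse to $\Ds$, with $\cC_\g$ nonempty. This places $(N,\Ds,\Ab[N])$ in the toric $\ell$-symplectic framework, with $M:=N/G=\prod_iM_i$ (each $M_i=N_i/G_i$ a toric symplectic orbifold) and connection $\eta=\bigoplus_i\eta_i$ whose curvature $\omega=\bigoplus_i\omega_i$ has $\omega_i$ equal to the pullback along $p_i\colon M\to M_i$ of the symplectic form of $M_i$.

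By the preceding proposition, a section of $\con^\T(\Ds)$ over an open set corresponds to an $\Ab[M]$-invariant $\g$-valued function $f$ with $\d f=-\iota_X\omega$ for some vector field $X$. Decomposing $f=\sum_i f_i$ into its $\g_i$-components, and using that $\omega_i$ annihilates $\bigoplus_{j\neq i}TM_j$, the system splits as $\d f_i=-\iota_X\omega_i$ for each $i$; this forces $\d f_i$ to annihilate the directions of all the other factors, hence (the factors being connected) $f_i$ to be locally the pullback along $p_i$ of an $\Ab[M_i]$-invariant $\g_i$-valued function on $M_i$. Conversely, any such collection $(f_i)$ solves the system: one may take for $X$ (locally) the sum of the orbifold Hamiltonian vector fields of the $f_i$ on the respective factors, which exist since each $\omega_i$ is nondegenerate on $M_i$. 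Since the same proposition with $\ell=1$ identifies $\con^\T(\Ds_i)$ with the sheaf of $\Ab[M_i]$-invariant $\g_i$-valued functions on $M_i$, i.e., with $C^\infty_{\Xi_i}$, I obtain a natural isomorphism of sheaves on $\Xi$
\[
\con^\T(\Ds)\;\cong\;\bigoplus_{i=1}^{\ell}\pi_i^{-1}\con^\T(\Ds_i),
\]
where $\pi_i\colon\Xi=\prod_j\Xi_j\to\Xi_i$ is the projection.

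Finally I would compute $H^1(\Xi,\pi_i^{-1}\con^\T(\Ds_i))$ using the Leray spectral sequence of $\pi_i$. This map is proper, with fibre $F_i=\prod_{j\neq i}\Xi_j$ compact and contractible, so by proper base change $(R^q\pi_{i*}\pi_i^{-1}\con^\T(\Ds_i))_x\cong H^q(F_i;(\con^\T(\Ds_i))_x)$, which vanishes for $q\geq1$ and equals $(\con^\T(\Ds_i))_x$ for $q=0$. Hence the spectral sequence degenerates and $H^n(\Xi,\pi_i^{-1}\con^\T(\Ds_i))\cong H^n(\Xi_i,\con^\T(\Ds_i))$ for all $n$; for $n=1$ this is $0$ because $\con^\T(\Ds_i)\cong C^\infty_{\Xi_i}$ is a fine, hence acyclic, sheaf. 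Summing over $i$ gives $H^1(\Xi,\con^\T(\Ds))=0$.

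The step I expect to be the main obstacle is the decoupling in the second paragraph: one must verify carefully that over a product the overdetermined $\ell$-Hamiltonian equation for $f$ separates factorwise, so that $\con^\T(\Ds)$ is exactly the external sum above and no \emph{mixed} invariant contact vector fields intervene. Once this sheaf identification is secured, the cohomological input --- proper base change and the fineness of $C^\infty_{\Xi_i}$ --- is routine.
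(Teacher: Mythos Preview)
Your proof is correct and follows essentially the same line as the paper's: both choose $G$ as the product of one-dimensional transversal subgroups, pass to the toric $\ell$-symplectic quotient $M=\prod_i M_i$, and use the preceding proposition to see that the $\ell$-Hamiltonian equation decouples factorwise, so that $\con^\T(\Ds)\cong\bigoplus_i\pi_i^{-1}C^\infty_{\Xi_i}$. The only difference is in the final cohomological step: the paper simply asserts that a sum of pullbacks of fine sheaves has vanishing $H^1$, whereas you supply the justification via proper base change and the Leray spectral sequence for $\pi_i$ (which is the honest way to see it, since $\pi_i^{-1}C^\infty_{\Xi_i}$ is not itself fine on $\Xi$).
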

\begin{proof} We take $G$ to be the product of transverse actions on each
factor, so that $M=N/G$ is a product of symplectic manifolds
$(M_i,\omega_i):i\in\{1,\ldots \ell\}$ and
$\omega=\omega_1\oplus\cdots\oplus\omega_\ell\in \Omega^2(M,\R^\ell)$.  Now
for any vector field $X$ on $M$, $-\iota_X\omega=(\alpha_1,\ldots
\alpha_\ell)$ where $\alpha_i$ is a section of the pullback of $T^*M_i$ to
$M$, viewed as a subbundle of $T^*M$.  For this to be of the form $(\d
f_1,\ldots \d f_\ell)$ for functions $f_1,\ldots f_\ell$ on $M$, we must have
$\d f_i(X_j)=0$ whenever $X_j$ is tangent to $M_j$ for $j\neq i$. Hence
$f_1,\ldots f_\ell$ are locally pullbacks from $M_1,\ldots M_\ell$, and are
otherwise constrained only by $\Ab[N]$-invariance.  Thus $\con^\T(\Ds)$ is a
product of pullbacks of fine sheaves, so has $H^1(\Xi,\con^\T(\Ds))=0$.
\end{proof}
This result applies in particular to products of spheres.

\subsection{Existence}\label{s:exists}

It remains to construct a toric contact manifold of Reeb type from a Delzant,
Reeb type, labelled polyhedral manifold with corners $(\Xi,e)$ in
$\Gr_\ell(\ab[]^*)$.

\begin{thm}\label{t:constructionN}  Let $\ab[]$ be the Lie algebra of
a $(\ell+m)$-dimensional torus $\T=\ab[]/2\pi\Lam$ and $(\Xi,e)$ be a Delzant,
Reeb type, labelled polyhedral manifold with corners in $\Gr_\ell(\ab[]^*)$.
There is a codimension $\ell$ compact toric contact manifold with a Reeb
component whose associated labelled manifold with corners is $(\Xi,e)$.
\end{thm}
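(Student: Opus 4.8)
The plan is to mimic the Delzant–Lerman–Tolman construction as carried out concretely in Example~\ref{e:contactLabPol}, but starting from the abstract data $(\Xi,e)$ rather than from a labelled polytope. Since $(\Xi,e)$ is of Reeb type, there is a distinguished pair $(\g,\lamc)$ with $\g\hookrightarrow\ab[]$ of dimension $\ell$ and $\lamc\in\g^*$ such that $\ip{\psi_\g,\lamc}$ carries $\Xi$ bijectively onto the convex polytope $\Pol_{\g,\lamc}=(\iota_\g\transp)^{-1}(\lamc)\cap\cC_e$. The first step is to record that this polytope, together with the affine functions $L_s:=\Ll(\e_s)$ (where $\ul\colon\ab[]\to\tor$ and $\Ll$ are built from the quotient $\ab[]\to\ab[]/\g$), is a simple labelled polytope: simplicity is exactly the Delzant condition that $(\e_s)_{s\in S}$ is linearly independent for each $S\in\Cb_\Xi$, and the combinatorial faces of $\Pol_{\g,\lamc}$ match $\Cb_\Xi$ by construction of $\cC_e$.

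The second step is to run the construction of Example~\ref{e:contactLabPol} verbatim on this labelled polytope. That is, set $\Ab[]=\cS$-indexed torus $\ab[\cS]/2\pi\Z_\cS$ acting diagonally on $\C_\cS$ with standard momentum map $\mm$, let $\g\leq\ab[\cS]$ be the kernel of $\d\circ\Ll$, pick $\lamc\in\g^*$ with $\Ll\circ\iota_\g=\afs\circ\lamc$, and form $N:=(F\circ\mm)^{-1}(0)$ with $F=\iota_\g\transp-\lamc$. The Delzant property guarantees $\lamc$ is a regular value, so $N$ is a compact manifold, $\Rb\g$ has full rank on $N$, and $N$ inherits a $\Ab[\cS]$-invariant contact distribution $\Ds$ (for instance the explicit $\Ds$ exhibited at the end of that example) admitting $(\g,\lamc)$ as a nondegenerate Levi pair, with $\eta^\lamc$ the pullback of the primitive $\tau_\cS$. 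Then $(N,\Ds,\bK)$ is a compact toric contact manifold of codimension $\ell$, and by Theorem~\ref{t:polytope} its horizontal momentum map has image the polytope $\Pol_{\g,\lamc}$, with labels $L_s=\Ll(\e_s)$; the Reeb component is the one containing $\eta^\lamc(N)$.

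The third step is to identify the grassmannian image of this $(N,\Ds,\bK)$ with the given $\Xi$. By Corollary~\ref{c:assMnldCorners} the grassmannian image $\Xi'$ is a Delzant, Reeb type, polyhedral manifold with corners, and by Remark~\ref{r:affine} we have $\ip{\psi_\g\circ\gM,\lamc}=\Ll\transp\circ\mu^\lamc$, whose image is $\Ll\transp(\Pol_{\g,\lamc})$. On the other side, the Reeb type hypothesis on $\Xi$ says $\ip{\psi_\g,\lamc}$ maps $\Xi$ bijectively onto $\Pol_{\g,\lamc}$. Since both $\Xi$ and $\Xi'$ sit in the affine chart $\Gr_\ell^\g(\ab[]^*)$ and have the same image under the chart map $\ip{\psi_\g,\lamc}$ (a homeomorphism onto its image), we get $\Xi=\Xi'$ as subsets of the grassmannian, and the matching of facet labels $\Xi_s\sub\Gr_\ell(\e_s^0)$ follows because the generator $\e_s$ is determined by the stratum $N_s$ exactly as in Theorem~\ref{t:polytope}(i). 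Finally one checks the lattice compatibility: because $(\Xi,e)$ is rational (Delzant implies $e_s\in\Lam$), one can, as in Remark~\ref{r:stabG}, arrange that the torus acting on $N$ is the prescribed $\T=\ab[]/2\pi\Lam$ (dividing by the subtorus corresponding to the lattice relations among the $\e_s$), so that the constructed manifold carries precisely a $\T$-action with the desired grassmannian image.

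\textbf{Main obstacle.} The routine parts are the Delzant-reduction bookkeeping. The genuinely delicate point is the \emph{last} step: ensuring that the torus one ends up with is the \emph{given} torus $\T=\ab[]/2\pi\Lam$, not merely some torus with the same Lie algebra. The construction in Example~\ref{e:contactLabPol} naturally produces the ``universal'' torus $\ab[\cS]/2\pi\Z_\cS$, and one must descend along the quotient $\ul\colon\ab[\cS]\to\tor$ (respectively the induced map on $\ab[]$) and verify that the kernel subtorus is closed and acts freely on $N$ transversally to $\Ds$ — this is where the Delzant/rationality hypothesis on $(\Xi,e)$ is used essentially, via the statement in Remark~\ref{r:stabG} that the $\Z$-span of the $u_s=\ul(\e_s)$ is a full-rank sublattice. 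Getting the normalization of $\lamc$ and the splitting $\ab[\cS]=\g\oplus(\text{complement})$ to cohere with the abstract $(\g,\lamc)$ attached to $\Xi$ requires care but no new ideas.
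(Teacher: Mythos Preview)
There is a genuine gap in your third step. You claim that since $\Xi$ and the grassmannian image $\Xi'$ of your constructed manifold both lie in the affine chart $\Gr_\ell^\g(\ab[]^*)$ and have the same image under $\ip{\psi_\g,\lamc}$, they must coincide. But $\ip{\psi_\g,\lamc}$ is \emph{not} the chart map: the chart $\psi_\g$ identifies $\Gr_\ell^\g(\ab[]^*)$ with an affine space of dimension $\ell m$, whereas $\ip{\psi_\g,\lamc}$ projects this to the $m$-dimensional affine subspace $(\iota_\g\transp)^{-1}(\lamc)\sub\ab[]^*$. For $\ell>1$ this projection is far from injective---many $\ell$-planes pass through a given point of $(\iota_\g\transp)^{-1}(\lamc)$---so having the same image $\Pol_{\g,\lamc}$ does not force $\Xi=\Xi'$. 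This is exactly the phenomenon flagged at the start of \S\ref{s:gr-im}: ``Example~\ref{e:contactLabPol} shows that even the labelled polytope $(\Pol,\Ll)$ is insufficient data to recover $(N,\Ds,\bK)$.'' Running that example verbatim, with the specific $\Ds$ exhibited there, produces \emph{some} grassmannian image over $\Pol_{\g,\lamc}$, but you have done nothing to make it equal to the prescribed $\Xi$.

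The paper's proof avoids this by building the full data of $\Xi$ into the construction from the outset. Instead of a single polytope slice, it works over an open set $V=(\iota_\g\transp)^{-1}(\cN)\cap\cC_e$ for a neighbourhood $\cN$ of $\lamc$, and crucially uses the foliation $\cF$ of $V$ with leaves $\cF_\xi=V\cap\xi$ for $\xi\in\Xi$. It then performs a Karshon--Lerman style symplectic cut of $\widetilde V\times\T\times\C_\cS$ to obtain a toric symplectic manifold $(M,\omega)$ with momentum image $V$, on which $\cF$ induces an isotropic foliation $\cG$; the contact manifold is $N=M/\cG$ and the contact distribution is $\Ds=f_*(T\cG^{\perp_\omega})$. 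The point is that $\cG$---and hence $\Ds$---depends on the foliation by the $\ell$-planes of $\Xi$, not merely on one slice, and this is precisely what forces the grassmannian image of $N$ to be $\Xi$. Your approach recovers only the single slice and loses the transverse variation of $\gM_z$ that distinguishes different contact distributions over the same polytope.
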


\begin{proof} The Reeb type condition implies there exist $\iota_\g\colon
\g \hookrightarrow \ab[]$ and $\lamc \in \g^*$ such that
$\ip{\psi_\g,\lamc}\colon \Xi\to \Pol_{\g,\lamc}$ is a diffeomorphism, with
$\Pol_{\g,\lamc}$ defined by \eqref{defnPOLxie}. This condition is open in
$\lamc$ (by compactness of $\Xi$, cf.~Remark~\ref{r:open}), so for $\lamc'$ in
a contractible open neighbourhood $\cN$ of $\lamc\in \g^*$,
$\ip{\psi_\g,\lamc'}$ is a diffeomorphism from $\Xi$ to
$\Pol_{\g,\lamc}$. Thus
\[
V:= (\iota_\g\transp)^{-1}(\cN) \cap\cC_e
\]
(with $\cC_e$ defined by~\eqref{defnCONExie}) is foliated by its intersections
with $\xi\in \Xi$, i.e., it has a foliation $\cF$ with leaf space $\Xi$ and
leaves $\cF_\xi:=V\cap \xi$ for $\xi\in\Xi$. Note that $\cF$ is compatible
with the face decomposition of $V$: for any $S\in\Cb_\Xi$ and any
$\xi\in\Xi_S$, $\cF_\xi\sub V_S:=\{x\in V\st \forall\, s\in
S,\;\ip{x,e_s}=0\}$.  Let $\widetilde{V}$ be open in $\ab[]^*$ with $V=
\widetilde{V}\cap \cC_e$.

We now use a variant of the Delzant construction, cf.~\cite{KL}, to build a
toric symplectic $2(m+\ell)$-manifold $(M,\omega)$ under $\T$. First, since
$\widetilde{V} \times \T\hookrightarrow \ab[]^*\times \T \simeq T^*\T$, it is
a toric symplectic manifold with momentum map
\[
x\colon \widetilde{V} \times \T \rightarrow \ab[]^*
\]
induced by the embedding of the first factor into $\ab[]^*$. It has
$\T$-invariant symplectic form $\tilde{\omega}=\ip{\d x\wedge\d\theta}$, where
$\theta\colon\T\to \ab[]/2\pi\Lam$ are the tautological angular coordinates on
$\T$.

Next, as in Example~\ref{e:contactLabPol}, let $\Z_\cS$ be the free abelian
group generated by $\cS$, let $\ab=\Z_\cS\otimes_\Z \R$ and
$\C_\cS=\Z_\cS\otimes_\Z \C$ be the corresponding free vector spaces over $\R$
and $\C$, and let $\Ab=\ab/2\pi\Z_\cS$. Since $(\Xi,e)$ is rational, the map
$s\mapsto e_s\in \Lam$ induces a group homomorphism $\Z_\cS\to\Lam$ and hence
also homomorphisms $\pi\colon\Ab\to \T$ and $\pi_*\colon \R_{\cS}\to \ab[]$.

We now consider the action of $\gamma\in\T_\cS$ on $\widetilde{V} \times
\T\times \C_\cS$ by
\begin{equation}\label{actionTS}
\gamma\cdot (x,[\theta],z) = (x,\pi(\gamma)\cdot[\theta],\gamma\cdot z)
\end{equation}
where $\T$ acts on $\T$ and $\T_\cS$ on $\C_\cS$ in the standard ways.  This
action is Hamiltonian for the symplectic form $\tilde{\omega}\oplus
(-\omega_{\mathrm{std}})$ on $(\widetilde{V} \times \T)\times \C_\cS$ with
momentum map
\begin{equation}
\begin{split}
\phi\colon  \widetilde{V} \times \T\times \C_\cS & \to \R_{\cS}^*\cong\R^\cS\\
(x,[\theta],z) &\mapsto\bigl(l_s(x)-\tfrac{1}{2}|z_s|^2\bigr)_{s\in\cS}.
\end{split}
\end{equation}
where $l_s(x):= \langle x, e_s\rangle$ are the linear maps defining
$\cC_e$. We now show that $\T_\cS$ acts freely on $\phi^{-1}(0)$. Indeed, if
$x\in\widetilde{V}\backslash V$ then $\phi(x,[\theta],z)<0$ and so
\begin{equation}\label{phi-10}
\phi^{-1}(0) = \bigcup_{x\in V}\Big(\{x\}\times \T \times \T_\cS(x)\Big)
\end{equation}
where $\T_\cS(x) := \prod_{s\in\cS} \Sph^1_{l_s(x)}$ and
$\Sph^1_{l_s(x)}=\left\{z\in\C \,\big|\; \frac{1}{2}|z|^2 = l_s(x) \right\}$ is a
circle of radius $\sqrt{2 l_s(x)}\geq 0$.  Hence if
$\gamma\in\Stab_{\T_\cS}(x,[\theta],z)$ then for all $s\in\cS$, either
$\gamma_s=0$ or $l_s(x)=0$. Now for $x\in V$, $S_x:=\{s\in\cS\st l_s(x)=0\}$
is in $\Cb_\Xi$ and so $e_s:s\in S_x$ are linearly independent. Indeed by the
Delzant condition, they belong to a basis for $\Lam$, and so if
$\pi(\gamma)=0$ then $\gamma=0$.

Consequently, the symplectic reduction of $\widetilde{V} \times \T\times
\C_\cS$ at $0$ with respect to the action~\eqref{actionTS} of $\T_\cS$ is a
smooth symplectic manifold $(M,\omega)$. Observe that the natural action of
$\T$ on the second factor of $\widetilde{V} \times \T\times \C_\cS$ commutes
with the action~\eqref{actionTS} of $\T_\cS$ and descends to an effective
action on $M$, whose momentum map $\mu\colon M\rightarrow \ab[]^*$ is
determined by $\mu\circ q=x|_{\phi^{-1}(0)}$ where $q\colon \phi^{-1}(0) \to
M$ is the quotient by $\T_\cS$.

The foliation $\cF$ of $V$ induces a $\T_{\cS}$-invariant isotropic foliation
of $\phi^{-1}(0)\sub V\times \T\times \C_\cS$, which descends to a
$\T$-invariant isotropic foliation $\cG$ of $M$.  Note that
$N_{\g,\lamc}:=\mu^{-1}(\Pol_{\g,\lamc})$ is a global transversal submanifold,
i.e., meets each leaf of $\cG$ in a single point.  Moreover, $N_{\g,\lamc}$ is
smooth, compact (recall that $\mu$ is proper) and $\T$-invariant. Thus the
space of leaves $N:=M/\cG$ is $\T$-equivariantly diffeomorphic to
$N_{\g,\lamc}$.  We denote by $f\colon M\to N$ the quotient map with
$\cG_z=f^{-1}(z)$.  The momentum map $\mu\colon M\to V$ is an orbit map for
$\T$, and it descends to a $\T$-invariant map $\gM\colon N \to \Xi$, hence a
rank $\ell$ subbundle $\gM\leq N\times \ab[]^*$ over $N$ with fibre
$\gM_z=\gM(z)$ such that, for $z\in N$, we have $\mu(\cG_z) = \cF_{\gM_z}$.

On the other hand, the symplectic quotient construction provides action angle
coordinates on $\mathring{M} :=\mu^{-1}(\mathring{V})$, that is a splitting of
the sequence
\[
0 \rightarrow \mathring{M}\times \ab[] \stackrel{\kappa}{\longrightarrow}
T\mathring{M} \stackrel{d\mu}{\longrightarrow} \mathring{M}\times\ab[]^*
\rightarrow 0
\]
which identifies
\begin{equation}\label{aaCOORDsplitting}
T\mathring{M} \simeq \mathring{M}\times (\ab[]\oplus \ab[]^*)
\end{equation}
such that the symplectic structure $\omega$ restricted on $\mathring{M}$
coincides with the fibrewise pairing on the left side. Now observe that via
the identification~\eqref{aaCOORDsplitting}, the pullback of $\gM^0\oplus
\ab[]^* \leq N\times (\ab[]\oplus \ab[]^*)$ on $M$, that is $f^*(\gM^0\oplus
\ab[]^*) \rightarrow M$, coincides over $\mathring{M}$ with
$T\mathcal{G}^{\perp_\omega}$. Hence $T\mathcal{G}^{\perp_\omega}$ is
projectable on $\mathring{N}:=f(\mathring{M})$. However a distribution
containing $T\cG$ is (locally) projectable if and only if it is preserved by
Lie derivatives along vector fields in $T\cG$. Hence
$T\mathcal{G}^{\perp_\omega}$ is projectable on $M$ by continuity.

We now show that $\Ds:= f_*(T\mathcal{G}^{\perp_\omega}) \leq TN$, a rank $2m$
distribution on $N$, is contact. If we denote $v= x\frac{\partial}{\partial
  x}\in TV$ the vector field induce from the $\R^*$-action of dilation, $v$ is
obviously tangent to each leaf of $\cF$ and thus defines a vector field $Y$ on
$M$ such that $\cL_Y\omega = \d\iota_Y\omega = \omega$. In particular,
$\iota_Y\omega$ is a primitive $1$-form whose kernel contains
$T\mathcal{G}^{\perp_\omega}$, and by using the same argument as in
Example~\ref{e:contactLabPol}, we conclude that $\Ds$ is contact.
\end{proof}

\begin{rem} If we begin the construction with $(\Xi,e)$ rational instead of
Delzant, we obtain instead a toric contact orbifold.
\end{rem}

\appendix 
\section{Affine geometry of natural momentum maps}\label{a:nmm}

Let $(M,\omega)$ be a connected symplectic manifold; then the \emph{symplectic
  gradient} $\grad_\omega f$ of a smooth function $f$ is the unique vector
field $X$ with $\d f= -i_X\omega$. The image of $\grad_\omega$ is the Lie
subalgebra $\ham(M,\omega)\leq C^\infty(M,TM)$ of \emph{hamiltonian vector
  fields}, and there is an exact sequence
\begin{equation*}
0\to \R \to C^\infty(M,\R)\stackrel{\grad_\omega}{\longrightarrow}
\ham(M,\omega)\to 0
\end{equation*}
of Lie algebras, where $C^\infty(M,\R)$ (the space of smooth functions) is a
Lie algebra under Poisson bracket, and $\R$ is included as the (central) ideal
of constant functions. For any (local) action of a Lie algebra $\tor$ on $M$
by hamiltonian vector fields, i.e., any Lie algebra morphism
$\tor\to\ham(M,\omega)$, this exact sequence pulls back to a central extension
\begin{equation*}
0\to \R\stackrel{\afs}{\to}\torh\to\tor\to 0
\end{equation*}
of Lie algebras---if $\tor$ is a Lie subalgebra of $\ham(M,\omega)$, then
$\torh\leq C^\infty(M,\R)$ is the space of hamiltonian generators $f$ of the
action (i.e., with $\grad_\omega f\in \tor$). Such an extension has an
interpretation in affine geometry: dual to the inclusion $\afs\colon\R \to
\torh$, we have a surjection $\afs\transp\colon \torh^*\to\R$ and hence an
exact sequence
\begin{equation*}
0\to \tor^*\to \torh^*\stackrel{\afs\transp}{\to} \R\to 0.
\end{equation*}
The inverse image of $1\in\R$ under $\afs\transp$ is an affine subspace $\As:=
(\afs\transp)^{-1}(1)$ of $\torh^*$, modelled on $\tor^*$.  The space of
affine functions $f\colon\As\to\R$ is canonically isomorphic to $\torh$: the
projection of $f$ to $\tor$, viewed as a linear form on $\tor^*$, is its
derivative $\d f$ (at every point of $\As$), and the constant functions are
$\afs(c), c\in \R$.

\begin{defn} Let $\tor\into\ham(M,\omega)$ be a (local, effective)
action by hamiltonian vector fields, and identify the extension $\torh$ of
$\tor$ with its image in $C^\infty(M,\R)$. The \textit{natural momentum map}
$\mu\colon M\to\As\sub\torh^*$ is defined by $\ip{\mu(x),f}=f(x)$ for $x\in M$
and $f\in \torh$.
\end{defn}
A (local, effective) action by hamiltonian vector fields is called a
\emph{hamiltonian} action if the extension $\torh\to\tor$ has a (Lie algebra)
splitting.  However, if the splitting is not unique, it can be more convenient
to work with the natural momentum map rather than its projection onto $\tor^*$
using a splitting. If $\tor$ is abelian, then the action is hamiltonian iff
$\torh$ is also abelian, i.e., the hamiltonian generators Poisson commute.

\end{document}